%% file: main.tex
\documentclass[11pt, reqno, twoside]{amsart}
\input{preamble}

\input{thm_setup}
\newcommand{\Z}{\mathbb{Z}}

\newcommand{\mb}{\mathbb}
\newcommand{\mc}{\mathcal}

\newcommand{\veps}{\varepsilon}

\renewcommand{\bar}{\overline}

\newcommand{\case}[1]{\begin{cases}#1\end{cases}}

\newcommand\angles[1]{\left \langle #1 \right \rangle}
\newcommand\brax[1]{\left \{ #1 \right \} }

\newcommand\paren[1]{\left ( #1 \right ) }
\newcommand\abs[1]{\left | #1 \right | }
\newcommand\braks[1]{\left [ #1 \right ] }
\newcommand{\ord}{\operatorname{ord}}

\newcommand{\Ga}{\Gamma}

\renewcommand{\th}{^{\text{th}}}

\newcommand{\papertitle}{Product and Quotient Set Dominance in Finite Groups}
\fancypagestyle{runningtitle}{%
  \fancyhf{}%
  \fancyhead[LE,RO]{\thepage}%
  \fancyhead[C]{\textsc{\small \papertitle}}%
}
\begin{document}

\title{\papertitle}

\author{Yuval Amit}
\address{Department of Mathematics, University of Maryland, College Park}
\email{\href{mailto:yamit@umd.edu}{yamit@umd.edu}}

\author{Lucas Chen}
\address{Department of Mathematics, University of Chicago}
\email{\href{mailto:lucasch@uchicago.edu}{lucasch@uchicago.edu}}

\author{Christopher Housholder}
\address{Department of Mathematics, Missouri State University}
\email{\href{mailto:christopherlancehousholder@gmail.com}{christopherlancehousholder@gmail.com}}

\author{Joshua Im}
\address{Department of Mathematics, Texas A\&M University}
\email{\href{mailto:jri@tamu.edu}{jri@tamu.edu}}

\author{Joshua Khan}
\address{Department of Mathematics, University of Rochester}
\email{\href{mailto:joshuakhan1307@gmail.com}{joshuakhan1307@gmail.com}}

\author{Steven J. Miller}
\address{Department of Mathematics, Williams College}
\email{\href{mailto:sjm1@williams.edu}{sjm1@williams.edu}}

\author{Mateo Palomares}
\address{Department of Mathematics, University of Michigan}
\email{\href{mailto:mateop@umich.edu}{mateop@umich.edu}}

\author{Devayani Pradhan}
\address{Department of Mathematics, University of Michigan}
\email{\href{mailto:pradhand@umich.edu}{pradhand@umich.edu}}
\date{\today}

\begin{abstract}
A classic problem in additive number theory asks whether a given subset of integers has more sums or differences. This inquiry naturally extends to finite groups, where one compares the sizes of the product set $AA$ and the quotient set $AA^{-1}$ for a subset $A \subseteq G$. A finite group is considered product-dominant if it contains a higher proportion of subsets where $|AA| > |AA^{-1}|$, and quotient-dominant if the reverse is true. We establish statistical criteria to determine the dominance behavior of large finite groups by analyzing the moments of the random variable $d(A) := |AA| - |AA^{-1}|$. By translating product and quotient relationships into graph-theoretic constructions, we compute these moments through counting independent sets.  Our main result demonstrates that for a growing sequence of finite groups $G_k$ with center size $Z_k$ and involution count $T_k$, the condition $Z_k = o(T_k)$ implies the groups are eventually product-dominant. Conversely, if $T_k \cdot \sqrt{3}^{R_k/Z_k} = o(Z_k)$ (where $R_k$ denotes the number of elements whose square is central), the groups are eventually quotient-dominant. Using these asymptotic conditions, we resolve the Miller--Vissuet conjecture by proving that all dihedral groups $D_n$ for $n \geq 3$ are product-dominant. Furthermore, we classify the dominance behavior of several major families of groups, proving that symmetric groups and non-abelian finite simple groups are product-dominant, while groups of odd order and certain abelian groups are quotient-dominant. Finally, for groups that fall outside our asymptotic conditions, we derive an explicit formula for the first moment utilizing the irreducible characters and conjugacy classes of the group.
\end{abstract}

\maketitle

\vspace{-25pt}
\begin{spacing}{0.9}
\tableofcontents
\end{spacing}

%%%%%%%%%%%%%%%%%%%%%%%%%%%%%%%%%%%%%%%%%%%%%%%%%%%%%%%%%%%%%%%%%%%%%%
\section{Introduction}

\subsection{Background}
Given a subset $A \subseteq \{1,\dots,n\}$, we may compare the sizes of the sumset
\begin{align}
    A+A\ :=\ \{ a+a' : a,a' \in A\}
\end{align}
and the difference set
\begin{align}
    A-A \ :=\ \{a -a' : a,a' \in A\}.
\end{align}
The study of subsets with $|A+A| > |A-A|$ has long been of interest to additive number theorists (see \cite{nathanson_history} for a history of the problem). In general, one expects to have $|A+A|<|A-A|$, as addition is commutative while subtraction is not. Sets with $|A+A| > |A-A|$ are called MSTD (more sums than differences) sets. While MSTD sets do exist (the smallest is attributed to J. H. Conway: $\{0, 2, 3, 4, 7, 11, 12, 14\}$), they typically make up only a small percentage of subsets of $\{1,\dots,n\}$. However, Martin and O'Bryant \cite{martinobryant} showed the surprising fact that if $\rho_n$ is the proportion of subsets of $\{1,\dots,n\}$ which are MSTD, then $\rho_n \geq 2\cdot 10^{-7}$ for all $n \geq 15$. Explicit infinite families of MSTD sets have been constructed by Nathanson \cite{nathanson_mstd}, Hegarty \cite{hegarty_explicit}, Miller, Orosz, and Scheinerman \cite{millerorosz}, and Zhao \cite{zhao_ballot}. More recently, Herrmann \cite{herrmann_small} classified the small MSTD sets in arbitrary fields, showing in particular that no abelian group contains an MSTD set of size five. Hegarty and Miller \cite{hegartymiller} later showed that, under a natural model in which each element is chosen with a probability tending to zero, almost all sets are difference dominated.

A natural extension of the MSTD problem is found in studying the subsets of a finite group $G$. Given a subset $A \subseteq G$, we define the \textbf{product set} and \textbf{quotient set} as
\begin{align}
    AA\ &:=\ \{xy : x,y \in A\},
    \\ AA^{-1}\ &:=\ \{xy^{-1} : x,y \in A\}
\end{align}
respectively. In a non-abelian group one could instead compare $AA$ with the left quotient set $A^{-1}A$; the two quotient sets need not have the same size, and the possible differences between them are studied by Duvivier et al.\ \cite{leftrightquotient}. We use the right quotient set $AA^{-1}$, and our work is largely independent of this choice. A subset $A \subseteq G$ is said to be \textbf{MPTQ} (more products than quotients) if $|AA|>|AA^{-1}|$, \textbf{MQTP} if $|AA|<|AA^{-1}|$, and \textbf{balanced} if $|AA| = |AA^{-1}|$. The situation for groups is different from that of $\{1,\dots,n\}$ for two reasons.
\begin{enumerate}[(a)]
    \item Unlike the interval $\{1,\dots,n\}$, a group is closed under multiplication and inverses.
    \item The intuition about the commutativity of addition versus the non-commutativity of subtraction vanishes for non-abelian groups.
\end{enumerate}
Product sets in non-commutative groups have also been studied from the perspective of growth, where the relevant question is how small $|AA|$ can be relative to $|A|$; see Tao \cite{tao_productset}.
Nathanson \cite{nathanson_history} studied product/quotient dominance in abelian groups to find methods that could be applied to the classical MSTD problem. He showed that MSTD sets in finite abelian groups could be lifted to construct infinite families of MSTD sets in the integers. Zhao \cite{ZhaoGroups} subsequently found an asymptotic formula for the proportion of MSTD sets in finite abelian groups, and showed that if $\{G_n\}$ is a sequence of finite abelian groups whose order goes to infinity, the proportion of subsets that are MSTD converges to zero; see also \cite{ZhaoMissing} for the companion analysis of which sums and differences are missing. For the cyclic groups $\Z/n\Z$, Hemmady, Lott, and Miller \cite{hemmadylottmiller} determined when almost all subsets are difference dominated. Miller and Vissuet \cite{miller_vissuet} generalized this to all finite groups by proving that, as the size of a finite group gets arbitrarily large, the proportion of subsets which are balanced converges to $1$. Moreover, the probability that the product set and quotient set both equal the entire group $G_n$ converges to $1$.

Recent research has pivoted to studying the behavior of the small proportion of sets that remain unbalanced. This includes exploring how the structure of non-abelian groups affects the behavior of unbalanced sets. Notably, Miller and Vissuet \cite{miller_vissuet} conjectured the following behavior of dihedral groups. We say that a group is product-dominant if it has more MPTQ sets than MQTP sets, and quotient-dominant if it has more MQTP than MPTQ sets.

\begin{restatable}[Miller--Vissuet]{conj}{millervissuet} \label{conj:miller_vissuet}
All dihedral groups $D_n$ for $n \geq 3$ are product-dominant.
\end{restatable}

Note that $D_n$ is balanced for $n=1,2$. Haviland et al. \cite{haviland} and Ascoli et al. \cite{generalizeddihedral} studied generalized dihedral groups, providing strong evidence in favor of \Cref{conj:miller_vissuet}. We continue along this line by characterizing when large finite groups are product- or quotient-dominant.

%%%%%%%%%%%%%%%%%%%%%%%%%%%%%%%%%%%%%%%%%%%%%%%%%%%%%%%%%%%%%%%%%%%%%%
\subsection{Main results}

For a subset $A \subseteq G$, let
\begin{align}
    d(A)\ :=\ |AA| - |AA^{-1}|.
\end{align}
If $A\subseteq G$ is chosen uniformly at random, we aim to understand the distribution of the random variable $d(A)$. It is therefore natural to study the moments of $d(A)$.
\begin{defn}
    The $r$-th \textbf{raw moment} of $d(A)$ is defined by
\begin{align}
   \mu_r(G)\ :=\ \frac{1}{2^{|G|}}\sum_{A \subseteq G}(d(A))^r. 
\end{align}
For ease of notation, we write
\begin{align}
    M_r(G)\ :=\ \sum_{A \subseteq G}(d(A))^r\ =\ 2^{|G|}\mu_r(G).
\end{align}
Another useful statistic is the number of subsets which are \textbf{balanced}, meaning $d(A)=0$. Let
\begin{align}
    B(G)\ :=\ \# \{A \subseteq G: d(A)=0\}.
\end{align}
We occasionally write $M_r:=M_r(G)$ and $B:=B(G)$.
\end{defn}

The statistical properties of moments (the first two moments in particular) and the number of balanced subsets can help us understand when a group is product- or quotient-dominant. This idea is captured by the following key theorem.

\begin{restatable}{thm}{domineq} \label{thm:dom_ineq}
Let $G$ be a finite group, and suppose
\begin{align}
    \frac{M_1^2}{(2^{|G|}-B)M_2}\ >\ \frac 12. \label{eq:dom_ineq}
\end{align}
Then
\begin{enumerate}[(i)]
    \item $M_1>0 \ \implies\ G$ is product-dominant,
    \item $M_1<0\ \implies\ G$ is quotient-dominant.
\end{enumerate}
\end{restatable}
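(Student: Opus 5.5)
The plan is to prove the contrapositive in each case using a single Cauchy--Schwarz inequality applied to the subsets on one side of zero. Write
\begin{align}
P\ :=\ \#\{A\subseteq G : d(A)>0\}, \qquad Q\ :=\ \#\{A\subseteq G : d(A)<0\},
\end{align}
so that $P+Q = 2^{|G|}-B$. Split the moments by the sign of $d(A)$:
\begin{align}
S_+\ :=\ \sum_{d(A)>0} d(A), \qquad S_-\ :=\ \sum_{d(A)<0} |d(A)|,
\end{align}
\begin{align}
T_+\ :=\ \sum_{d(A)>0} d(A)^2, \qquad T_-\ :=\ \sum_{d(A)<0} d(A)^2.
\end{align}
Then $M_1 = S_+ - S_-$ and $M_2 = T_+ + T_-$. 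Balanced sets contribute nothing to either moment. Note also that the hypothesis \eqref{eq:dom_ineq} only makes sense when $2^{|G|}-B>0$ and $M_2>0$. Both hold automatically once $M_1\neq 0$, since then some set is unbalanced.

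For case (i), assume $M_1>0$ and suppose for contradiction that $G$ is not product-dominant, i.e.\ $P\le Q$. This gives $P\le (P+Q)/2 = (2^{|G|}-B)/2$. Since $S_-\ge 0$ and $M_1>0$, we have $0<M_1\le S_+$. Applying Cauchy--Schwarz to the $P$ terms with $d(A)>0$ gives $S_+^2\le P\,T_+$. Chaining the bounds,
\begin{align}
M_1^2\ \le\ S_+^2\ \le\ P\,T_+\ \le\ P\,M_2\ \le\ \tfrac12 (2^{|G|}-B)\,M_2,
\end{align}
which contradicts \eqref{eq:dom_ineq}. Hence $P>Q$, and $G$ is product-dominant.

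Case (ii) is the mirror image. If $M_1<0$, then $|M_1|\le S_-$ and $S_-^2\le Q\,T_-\le Q\,M_2$. Assuming $Q\le P$ forces $Q\le (2^{|G|}-B)/2$, and the same chain gives the contradiction.

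There is no real obstacle here. The only points needing care are the positivity conditions that make the ratio in \eqref{eq:dom_ineq} well defined, and the observation that the weak inequality $P\le Q$ already yields $P\le N/2$. The strict inequality $>\tfrac12$ in the hypothesis is exactly what rules out the tie $P=Q$.
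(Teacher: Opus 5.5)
Your proof is correct and is essentially the paper's argument. Both rest on the one-sided second-moment bound $\#\{A : d(A)>0\} \geq M_1^2/M_2$ (for $M_1>0$), combined with $\#\{d>0\}+\#\{d<0\} = 2^{|G|}-B$, so that the hypothesis forces the positive side to be more than half of the unbalanced sets. The only difference is that the paper gets this bound by quoting Cantelli's inequality, while you derive it directly from Cauchy--Schwarz on the sets with $d(A)>0$; that makes your version self-contained and gives the slightly sharper intermediate bound $S_+^2/T_+$.
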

Our main result uses \Cref{thm:dom_ineq} to determine whether sufficiently large groups, under certain assumptions, are product- or quotient-dominant.

\begin{defn}
    If $\{a_k\},\{b_k\}$ are two sequences, we say $a_k = o(b_k)$ if $\lim_{k \to \infty}{a_k}/{b_k}=0$.
\end{defn}

\begin{restatable}{thm}{mptqthm}
\label{thm:mptqresult}
    Let $\{G_k\}$ be a sequence of finite groups with $|G_k| \to \infty$. Let $Z(G_k)$ denote the center of $G_k$, and define
    \begin{align}
        T_k\ &:=\ \# \{g \in G_k : \ord(g)=2 \}, \\
        Z_k\ &:=\ |Z(G_k)|, \\
        R_k\ &:=\ \#\{ g \in G_k : g^2 \in Z(G_k)\}.
    \end{align}
    Then, we have the following properties.
    \begin{enumerate}[(i)]
        \item If $Z_k = o(T_k)$, then $G_k$ is product-dominant for sufficiently large $k$.
        \item If $T_k\cdot \sqrt 3^{R_k/Z_k} = o(Z_k)$, then $G_k$ is quotient-dominant for sufficiently large $k$.
    \end{enumerate}
\end{restatable}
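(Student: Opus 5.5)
The plan is to deduce \Cref{thm:mptqresult} from \Cref{thm:dom_ineq}. This needs asymptotics for three quantities: the first moment $M_1$, the second moment $M_2$, and the number $2^{|G|}-B$ of unbalanced sets. By Miller--Vissuet, almost every $A$ has $AA=AA^{-1}=G$. So $d(A)$ is governed by the rare events that a single element $g$ is missing from $AA$ or from $AA^{-1}$. I would write
\[
d(A)=\sum_{g\in G}\bigl(\mathbf 1[g\notin AA^{-1}]-\mathbf 1[g\notin AA]\bigr)
\]
and expand the moments as sums over $g$, or over pairs $(g,h)$ for $M_2$, of probabilities that certain elements are missing.

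\textbf{Missing-element events as independent sets.} The condition $g\notin AA$ says that for every $x$, not both $x$ and $x^{-1}g$ lie in $A$. So $A$ is an independent set in the graph on $G$ with edges $x\sim x^{-1}g$, with loops where $x^2=g$.
\begin{itemize}
\item For $g\notin AA^{-1}$ the edges are $x\sim g^{-1}x$, where I write $y=g^{-1}x$ so that $xy^{-1}=g$. For $g\ne e$ this is a union of cycles of length $\ord(g)$. The most likely missing quotients are therefore those with $\ord(g)=2$: the graph is a perfect matching, and the missing probability is $(3/4)^{|G|/2}$.
\item For products, the map $x\mapsto x^{-1}g$ is an involution on $G$. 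Its fixed points $x^2=g$ become loops forcing $x\notin A$, and all other vertices are matched in pairs. So the probability $g\notin AA$ is $(1/2)^{s(g)}(3/4)^{(|G|-s(g))/2}$, where $s(g)=\#\{x:x^2=g\}$.
\end{itemize}
Hence, up to lower-order terms, $2^{-|G|}M_1\approx (3/4)^{|G|/2}\bigl(T_k-\sum_g (2/3)^{s(g)}\bigr)$. The product side involves $\sum_g (2/3)^{s(g)}$. A naive count of this sum gives $|G|$ minus a correction, which is suspicious and needs rethinking.

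\textbf{Correcting the comparison.} I expect the real comparison is not simply ``$e$ missing''. The count $|AA|$ versus $|AA^{-1}|$ should be compared through translation-invariant structure, namely the dependence between events for different $g$. The center enters because for central $z$ the product graph for $g$ and the product graph for $gz$ are related, giving correlated missing events. That is, the structure is better captured by the orbits of $g$ under multiplication by $Z(G)$. I would therefore redo the first-moment computation grouping $g$ into central cosets. In coset-graph language, $R_k/Z_k$ counts cosets $xZ$ on which squaring lands centrally. These produce cycles or components where the independent-set count is $\sqrt3$ per pair rather than $3/4$ per pair, which explains the $\sqrt3^{R_k/Z_k}$ factor. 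The target is
\[
M_1\approx c\,2^{|G|}(3/4)^{|G|/2}\bigl(T_k-\Theta(Z_k)\bigr)
\]
for (i), and for (ii) a negative leading term of size $\asymp Z_k$ against a positive term at most $T_k\sqrt3^{R_k/Z_k}$. This step of identifying exactly which configurations dominate and bounding the rest is, I expect, the main obstacle. I would first try the exact independent-set counts for cycle unions and bound every other configuration by an extra factor $(3/4)^{\Omega(|G|)}$.

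\textbf{Checking the hypothesis of \Cref{thm:dom_ineq}.} Once $M_1$ is established, I would show $M_2\approx \sum_A d(A)^2$ is dominated by diagonal terms where a single element is missing, which gives $d(A)=\pm1$. Off-diagonal pairs are controlled using the same independent-set bounds. Similarly, $2^{|G|}-B$ is asymptotically the number of sets with exactly one missing element. Write $P$ for the number of unbalanced sets $A$ with $d(A)=+1$ and $Q$ for the number with $d(A)=-1$. Under the hypothesis of (i) or (ii), one type dominates, so $|M_1|\approx \max(P,Q)$, $M_2\approx P+Q$, and $2^{|G|}-B\approx P+Q$. The ratio in \eqref{eq:dom_ineq} then tends to $1>1/2$. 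The sign of $M_1$ then decides the dominance direction via \Cref{thm:dom_ineq}.
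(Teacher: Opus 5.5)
Your overall architecture (single missing elements dominate, so the ratio in \eqref{eq:dom_ineq} tends to $1$) matches the paper's. However, the first-moment step rests on a false claim, and the key idea is missing.

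The map $x\mapsto x^{-1}g$ is an involution only when $g$ is central. In general $p_g(p_g(x))=g^{-1}xg$, so in the product forbiddance graph $x$ is adjacent to both $x^{-1}g$ and $gx^{-1}$, and these coincide only for $x\in C_G(g)$. This is exactly why your naive count came out of order $|G|$. Separately, for central $g$ one has $(1/2)^{s}(3/4)^{(N-s)/2}=(3/4)^{N/2}\,3^{-s/2}$, not $(3/4)^{N/2}(2/3)^{s}$; with $(2/3)^s$ you could never reach $\sqrt 3^{R_k/Z_k}$.

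What you need is the centralizer bound. For non-central $g$, all loops and $2$-cycles of $\mc F_P(g)$ lie in the proper subgroup $C_G(g)$, which has size at most $N/2$. Every other vertex lies in a cycle of length at least $3$, worth at most $7^{1/4}$ per vertex by \Cref{lem:lucas_asym}. Hence $i(\mc F_P(g))\le 3^{N/4}7^{N/8}<1.68^N$, which is exponentially below $\sqrt 3^N$, so
\[
\sum_{g\in G} i(\mc F_P(g))\ =\ \sum_{g\in Z(G)}\sqrt 3^{\,N-r(g)}\ +\ O\bigl(N\cdot 1.68^N\bigr).
\]
This is at most $Z\sqrt 3^N$. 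By convexity and $\sum_{g\in Z(G)}r(g)=R$, it is at least $Z\sqrt 3^{\,N-R/Z}$. That is where $Z_k$ and $\sqrt 3^{R_k/Z_k}$ come from: each square root of a central $g$ is a loop in $\mc F_P(g)$ and costs a factor $\sqrt 3$.

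Your proposed repair points the wrong way. The first moment $M_1=2^{|G|}\sum_g\bigl(\Pr[g\notin AA^{-1}]-\Pr[g\notin AA]\bigr)$ is linear in single-element events. So correlations between missing events for different $g$, or grouping $g$ into central cosets, cannot affect it; the center acts entirely through the individual graphs $\mc F_P(g)$. As written, your first-moment target is read off from the statement rather than derived.

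The second-moment paragraph also leaves two needed inputs unaddressed.
\begin{enumerate}
\item You need a uniform bound on $i(\mc F_{PP}(g,h))$ and $i(\mc F_{QQ}(g,h))$ for $g\neq h$ that is exponentially below $\sqrt 3^N$. The containments $\mc F_P(g)\subseteq \mc F_{PP}(g,h)$ and $\mc F_Q(g)\subseteq\mc F_{QQ}(g,h)$, plus the centralizer bound, do not suffice when both elements are central (for $\mc F_{PP}$), respectively both are involutions (for $\mc F_{QQ}$). There a separate union-of-two-matchings argument or the Sah--Sawhney--Stoner--Zhao bound is required.
\item In case (ii), you need a lower bound $\sum_g i(\mc F_P(g))\ge \sqrt 3^{N_k}/N_k$, so that the $N_k^2$ off-diagonal terms are negligible. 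This requires $\sqrt 3^{R_k/Z_k}\le N_k$, which follows from the hypothesis when $T_k\ge 1$, and from $R_k=Z_k$ (odd order, via Cauchy) when $T_k=0$.
\end{enumerate}
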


This establishes that in many cases, the size of the center and the number of order-two elements of a group are what most heavily influence its dominance behavior. There are many notable families of groups which satisfy the conditions in \Cref{thm:mptqresult}. Some of these groups are studied in \Cref{sec:proofandcors}, where we show that sufficiently large
\begin{itemize}
    \item dihedral groups are product-dominant (\Cref{thm:dihedralgroups}),
    \item symmetric groups are product-dominant (\Cref{thm:symmetricgroups}),
    \item non-abelian finite simple groups are product-dominant (\Cref{thm:simplegroups}),
    \item groups of odd order are quotient-dominant (\Cref{thm:oddgroups}), and
    \item abelian groups with at least $6$ squares and at least $171$ elements\footnote{This threshold can be lowered. See \Cref{rmk:abelianthreshold}.} are quotient-dominant (\Cref{thm:abeliangroups}).
\end{itemize}

For dihedral groups in particular, we evaluate precisely when ``sufficiently large'' occurs. Then, with manual computation we confirm all smaller dihedral groups to prove \Cref{conj:miller_vissuet}. We also perform this threshold analysis on abelian groups.

For groups that do not satisfy the conditions of \Cref{thm:mptqresult} (dicyclic groups, for example), deeper study of the moments of $d(A)$ is needed. Computation of $M_1(G)$ in particular is important, as one must find whether \eqref{eq:dom_ineq} is satisfied, and if $M_1>0$ or $M_1<0$. In \Cref{thm:num_of_cycles_prod,thm:expthm}, we establish an explicit formula for $M_1(G)$ based on the irreducible characters and the conjugacy classes of $G$.

Finally, we discuss future work, and in particular we mention special cases where $M_1(G)>0$ while $G$ is quotient-dominant, or $M_1(G)<0$ while $G$ is product-dominant. Groups with these properties cannot be studied via the methods of \Cref{thm:dom_ineq}, making them potentially necessary and interesting to classify.

The paper is organized as follows. \Cref{sec:dom_ineq} proves \Cref{thm:dom_ineq}. \Cref{sec:M1,sec:M2} express $M_1$ and $M_2$ in terms of independent sets of forbiddance graphs, and bounds those counts. \Cref{sec:dom_thm} combines the bounds to prove \Cref{thm:mptqresult}. \Cref{sec:proofandcors} applies \Cref{thm:mptqresult} to dihedral, symmetric, simple, odd-order, and abelian groups. \Cref{sec:proving_expthm} derives a character-theoretic formula for $M_1(G)$, and \Cref{sec:futurework} discusses open problems.

%%%%%%%%%%%%%%%%%%%%%%%%%%%%%%%%%%%%%%%%%%%%%%%%%%%%%%%%%%%%%%%%%%%%%%
\section{\texorpdfstring{Proof of \Cref{thm:dom_ineq}}{Proof of Dominance Inequality}} \label{sec:dom_ineq}

\Cref{thm:dom_ineq}, which is restated below, is a purely statistical fact, but it turns out to be very effective for studying product/quotient-dominant behavior in finite groups.

\domineq*

\begin{proof}
    We prove statement (i), and the proof (ii) is nearly identical, hence omitted. To prove $G$ is product-dominant, we must show
\begin{align} \label{eq:mptq_prob_cond}
    P(d(A)>0)\ >\ P(d(A)<0),
\end{align}
where $P$ denotes the probability over a uniformly chosen random subset $A \subseteq G$.  We use the shorthand $P(d<0)$ and $P(d>0)$.
Let $\mu_1$ be the expected value of $d(A)$ and $\sigma^2$ be the variance. Since $M_1>0$ implies $\mu_1>0$, Cantelli's inequality \cite{cantelli} gives 
\begin{align}
    P(d\leq 0)\ =\ P(d - \mu_1\leq -\mu_1)\ \leq\ \frac{\sigma^2}{\sigma^2 +\mu_1^2}.
\end{align}
Hence
\begin{align}
    P(d>0)\ =\ 1-P(d\leq 0)\ \geq\ 1-\frac{\sigma^2}{\sigma^2 + \mu_1^2}.
\end{align}
On the other hand, let $\beta := P(d=0)=B/2^{|G|}$ be the proportion of balanced sets. Then,
\begin{align}
    P(d<0)\ &=\ 1-P(d\geq 0)\ =\ 1-P(d=0)-P(d>0) \notag
    \\ &=\ 1-\beta-P(d>0)\ \leq\ 1- \beta -\paren{1-\frac{\sigma^2}{\sigma^2 + \mu_1^2}} \notag
    \\ &=\ \frac{\sigma^2}{\sigma^2 + \mu_1^2}-\beta.
\end{align}
Therefore, from \eqref{eq:mptq_prob_cond} we see 
\begin{align} \label{eq:probv1}
    1-\frac{\sigma^2}{\sigma^2 + \mu_1^2} \ >\  \frac{\sigma^2}{\sigma^2  +\mu_1^2}-\beta \ \implies\ G \text{ is product-dominant}. 
\end{align}

Now, we may simplify the expression in \eqref{eq:probv1} by using the fact that $\sigma^2 + \mu_1^2 = \mu_2$ (see \cite{statproofbookSecondMoment}, for example). This gives
\begin{align}
1-\frac{\sigma^2}{\sigma^2 + \mu_1^2}\ >\  \frac{\sigma^2}{\sigma^2  +\mu_1^2}-\beta \ &\iff\ 1+\beta\ >\  \frac{2 \sigma^2}{\sigma^2 + \mu_1^2} \notag
    \\ &\iff 1 +\beta \ >\   \frac{2\mu_2 - 2\mu_1^2}{\mu_2} \notag
    \\ &\iff \frac{2\mu_1^2}{\mu_2}\ >\ 1 -\beta \notag
    \\ &\iff \frac{2M_1^2}{M_2} \ >\ 2^{|G|} -B \notag
    \\ &\iff \frac{M_1^2}{(2^{|G|}-B)M_2}\ >\ \frac 12,
\end{align}
as desired.
\end{proof}

With \Cref{thm:dom_ineq} in mind, we aim to find sufficient lower bounds for $M_1$, and upper bounds for $M_2$ and $2^{|G|}-B$. 

%%%%%%%%%%%%%%%%%%%%%%%%%%%%%%%%%%%%%%%%%%%%%%%%%%%%%%%%%%%%%%%%%%%%%%
\section{\texorpdfstring{The first moment $M_1$}{The First Moment M1}} \label{sec:M1}

To study $M_1(G)$, we express it with a graph-theoretic summation, following the work of \cite{ZhaoGroups}. For a set $S \subseteq G$, let $\mb 1_S$ be the indicator function
\begin{align}
    \mb 1_S(g)\ :=\ \case{1 & g \in S \\ 0 & g \not \in S.}
\end{align}
Then, we use the fact that
\begin{align}
    |S| \ =\ \sum_{g \in G}\mb 1_{S}(g)
\end{align}
to obtain the equality
\begin{align}
    M_1(G)\ =&\ \ \sum_{A \subseteq G}d(A) \ =\ \sum_{A \subseteq G}\paren{|AA| - |AA^{-1}|} \notag
    \\=&\ \ \sum_{A \subseteq G}\sum_{g \in G}(\mb 1_{AA}(g) - \mb 1_{AA^{-1}}(g)) \notag
    \\ =&\ \ \sum_{g \in G}\sum_{A \subseteq G}\paren{\mb 1_{AA}(g) - \mb 1_{AA^{-1}}(g)} \notag
    \\ =&\ \ \sum_{g \in G}\braks{\# \{ A \subseteq G:g \in AA\} -\# \{ A \subseteq G : g \in AA^{-1}\} } \notag
    \\ =&\ \ \sum_{g \in G}\braks{\paren{2^{|G|} - \# \{ A: g \not \in AA\}} - \paren{2^{|G|} - \# \{A:g \not \in AA^{-1}\}}} \notag
    \\ =&\ \ \sum_{g \in G}\braks{\#\{A:g \not \in AA^{-1}\} - \# \{ A : g \not \in AA\}}. \label{eq:exp_breakdown}
\end{align}
To compute the two cardinalities in \eqref{eq:exp_breakdown}, we turn to a graph-theoretic construction first explored in \cite{ZhaoGroups}.

\begin{defn} \label{def:forb_1}
Let $G$ be a group. A \textbf{forbiddance graph} is a graph whose vertices are the elements of $G$, with an edge $x \to y$ whenever $x$ and $y$ satisfy a predetermined product or quotient relationship with respect to some fixed element(s) of $G$. In particular, for fixed $g \in G$, we define the forbiddance graphs $\mc F_P(g)$ and $\mc F_Q(g)$ by the following rules.
\begin{align}
    &\mc F_P(g) \text{ has } x \to y \text{ whenever } xy=g ,
    \\ &\mc F_Q(g) \text{ has } x \to y \text{ whenever } xy^{-1}=g .
\end{align}
\end{defn}

\begin{defn}
    For a directed or undirected graph $\Gamma=(V,E)$, a set $A\subseteq V$ is said to be an \textbf{independent set} if there are no $x,y \in A$ (not necessarily distinct) joined by an edge. The number of independent sets of $\Gamma$ is denoted $i(\Gamma)$.
\end{defn}

\begin{ex}
    Let $G = S_4$ and $g = (1\:2\:3)$. \Cref{fig:FP_S4} depicts $\mc F_P(g)$, with an example independent set boxed and colored in red.
\end{ex}

\begin{figure}[ht]
% https://q.uiver.app/#q=WzAsMjQsWzEsMCwiXFxib3hlZHsoMVxcOjNcXDo0KX0iLFswLDYwLDYwLDFdXSxbMCwxLCIoMVxcOjQpKDJcXDozKSJdLFswLDIsIlxcYm94ZWR7KDFcXDo0XFw6Mil9IixbMCw2MCw2MCwxXV0sWzEsMywiKDFcXDozKSgyXFw6NCkiXSxbMiwyLCJcXGJveGVkeygyXFw6NFxcOjMpfSIsWzAsNjAsNjAsMV1dLFsyLDEsIigxXFw6MikoM1xcOjQpIl0sWzQsMCwiKDNcXDo0KSJdLFszLDEsIigxXFw6MlxcOjNcXDo0KSJdLFszLDIsIigxXFw6NCkiXSxbNCwzLCIoMVxcOjRcXDoyXFw6MykiXSxbNSwyLCJcXGJveGVkeygyXFw6NCl9IixbMCw2MCw2MCwxXV0sWzUsMSwiKDFcXDoyXFw6NFxcOjMpIl0sWzcsMiwiXFxib3hlZHsoMVxcOjJcXDo0KX0iLFswLDYwLDYwLDFdXSxbNiwxLCIoMVxcOjRcXDozKSJdLFs2LDMsIigyXFw6M1xcOjQpIl0sWzEsNCwiKDFcXDo0XFw6M1xcOjIpIl0sWzAsNSwiXFxib3hlZHsoMVxcOjNcXDo0XFw6Mil9IixbMCw2MCw2MCwxXV0sWzIsNSwiKDFcXDozXFw6MlxcOjQpIl0sWzYsNCwiKDFcXDoyKSJdLFs1LDUsIigxXFw6MykiXSxbNyw1LCIoMlxcOjMpIl0sWzMsNCwiXFxib3hlZHtlfSIsWzAsNjAsNjAsMV1dLFszLDUsIigxXFw6MlxcOjMpIl0sWzQsNCwiKDFcXDozXFw6MikiXSxbMSwwXSxbMiwxXSxbMywyXSxbNCwzXSxbNSw0XSxbMCw1XSxbNyw2XSxbOCw3XSxbOSw4XSxbMTAsOV0sWzExLDEwXSxbNiwxMV0sWzEzLDEyXSxbMTQsMTNdLFsxMiwxNF0sWzE2LDE1XSxbMTcsMTZdLFsxNSwxN10sWzE5LDE4XSxbMjAsMTldLFsxOCwyMF0sWzIyLDIxLCIiLDIseyJjdXJ2ZSI6LTF9XSxbMjEsMjIsIiIsMix7ImN1cnZlIjotMX1dLFsyMywyMywiIiwwLHsiYW5nbGUiOi0xODB9XV0=
\[\begin{tikzcd}[ampersand replacement=\&,cramped,column sep=tiny, row sep=small]
	\& \textcolor{rgb,255:red,214;green,92;blue,92}{{\boxed{(1\:3\:4)}}} \&\&\& {(3\:4)} \&\&\& \\
	{(1\:4)(2\:3)} \&\& {(1\:2)(3\:4)} \& {(1\:2\:3\:4)} \&\& {(1\:2\:4\:3)} \& {(1\:4\:3)} \\
	\textcolor{rgb,255:red,214;green,92;blue,92}{{\boxed{(1\:4\:2)}}} \&\& \textcolor{rgb,255:red,214;green,92;blue,92}{{\boxed{(2\:4\:3)}}} \& {(1\:4)} \&\& \textcolor{rgb,255:red,214;green,92;blue,92}{{\boxed{(2\:4)}}} \&\& \textcolor{rgb,255:red,214;green,92;blue,92}{{\boxed{(1\:2\:4)}}} \\
	\& {(1\:3)(2\:4)} \&\&\& {(1\:4\:2\:3)} \&\& {(2\:3\:4)} \\
	\& {(1\:4\:3\:2)} \&\& \textcolor{rgb,255:red,214;green,92;blue,92}{{\boxed{e}}} \& {(1\:3\:2)} \&\& {(1\:2)} \\
	\textcolor{rgb,255:red,214;green,92;blue,92}{{\boxed{(1\:3\:4\:2)}}} \&\& {(1\:3\:2\:4)} \& {(1\:2\:3)} \&\& {(1\:3)} \&\& {(2\:3)}
	\arrow[from=1-2, to=2-3]
	\arrow[from=1-5, to=2-6]
	\arrow[from=2-1, to=1-2]
	\arrow[from=2-3, to=3-3]
	\arrow[from=2-4, to=1-5]
	\arrow[from=2-6, to=3-6]
	\arrow[from=2-7, to=3-8]
	\arrow[from=3-1, to=2-1]
	\arrow[from=3-3, to=4-2]
	\arrow[from=3-4, to=2-4]
	\arrow[from=3-6, to=4-5]
	\arrow[from=3-8, to=4-7]
	\arrow[from=4-2, to=3-1]
	\arrow[from=4-5, to=3-4]
	\arrow[from=4-7, to=2-7]
	\arrow[from=5-2, to=6-3]
	\arrow[curve={height=-6pt}, from=5-4, to=6-4]
	\arrow[from=5-5, to=5-5, loop, in=235, out=305, distance=10mm]
	\arrow[from=5-7, to=6-8]
	\arrow[from=6-1, to=5-2]
	\arrow[from=6-3, to=6-1]
	\arrow[curve={height=-6pt}, from=6-4, to=5-4]
	\arrow[from=6-6, to=5-7]
	\arrow[from=6-8, to=6-6]
\end{tikzcd}\]
\caption{The forbiddance graph $\mc F_P((1\:2\:3))$ in $S_4$. The boxed red vertices form an independent set.}
\label{fig:FP_S4}
\end{figure}

The number of independent sets of a graph is, in some cases, well understood (see \cite{ZHAO_indep_sets,ZhaoGroups,zhao_strong_indep}). Therefore, graph-theoretic results on the number of independent sets can be applied to our problem via the following identities.

\begin{rmk}
    For a finite group $G$, and $g \in G$,
    \begin{align}
        i(\mc F_P(g))\ &=\ \# \{A \subseteq G: g \not \in AA\}, \label{eq:prod_indep_id}
    \\ i(\mc F_Q(g))\ &=\ \# \{A \subseteq G: g \not \in AA^{-1}\} .\label{eq:quot_indep_id}
    \end{align}
\end{rmk}

These equalities hold because a subset $A \subseteq G$ satisfies $g \notin AA$ (resp. $g \notin AA^{-1}$) exactly when $A$ contains no $x,y$ (not necessarily distinct) with $xy=g$ (resp. $xy^{-1}=g$); that is, exactly when $A$ is an independent set of $\mc F_P(g)$ (resp. $\mc F_Q(g)$). We therefore obtain the following expression for $M_1(G)$.

\begin{lem} \label{lem:M1_sum}
    For a finite group $G$, we have
    \begin{align}
        M_1(G)\ =\ \sum_{g \in G}\paren{i(\mc F_Q(g)) - i(\mc F_P(g))}. \label{eq:M1_sum}
    \end{align}
\end{lem}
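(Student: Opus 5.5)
The plan is to combine the computation \eqref{eq:exp_breakdown} with the identities \eqref{eq:prod_indep_id} and \eqref{eq:quot_indep_id}; almost all of the work has already been done.

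First, I start from the last line of \eqref{eq:exp_breakdown}, which expresses $M_1(G)$ as the sum over $g\in G$ of $\#\{A : g\notin AA^{-1}\} - \#\{A : g\notin AA\}$. That derivation used only three facts:
\begin{itemize}
\item the expansion $|S| = \sum_{g} \mathbb{1}_S(g)$;
\item interchanging the two finite sums over $A$ and over $g$;
\item complementary counting against the total number $2^{|G|}$ of subsets.
\end{itemize}
So it holds for any finite group with no further hypotheses.

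Second, for each fixed $g$ I replace the two counts by independent-set counts. By \eqref{eq:quot_indep_id}, $\#\{A : g\notin AA^{-1}\}=i(\mathcal F_Q(g))$, and by \eqref{eq:prod_indep_id}, $\#\{A : g\notin AA\} = i(\mathcal F_P(g))$. Substituting these termwise gives \eqref{eq:M1_sum} immediately.

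The only point that needs care, and hence the mild obstacle, is verifying the two identities exactly, including loops. Take $g\notin AA$. This means there are no $x,y\in A$, possibly equal, with $xy=g$. An edge $x\to y$ of $\mathcal F_P(g)$ is precisely such a pair, and a loop at $x$ corresponds to $x^2=g$. Since the definition of an independent set forbids both endpoints of any edge (including a loop, where $x=y$) lying in $A$, the condition on $A$ is exactly independence in $\mathcal F_P(g)$. The same argument applies to $\mathcal F_Q(g)$ with $xy^{-1}=g$; here a loop occurs only when $g=e$. With that check, the sum identity follows, and the lemma is proved.
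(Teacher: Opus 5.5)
Your proposal is correct and matches the paper's proof, which likewise just substitutes \eqref{eq:prod_indep_id} and \eqref{eq:quot_indep_id} into the last line of \eqref{eq:exp_breakdown}. Your explicit check of the loop cases ($x^2=g$ in $\mathcal F_P(g)$, and $g=e$ in $\mathcal F_Q(g)$) is a sound justification of those identities, which the paper states in the remark preceding the lemma.
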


\begin{proof}
    This follows from \eqref{eq:exp_breakdown}, \eqref{eq:prod_indep_id}, and \eqref{eq:quot_indep_id}.
\end{proof}

We now study a few properties of $\mc F_{Q}(g)$ and $\mc F_P(g)$ to aid our later analysis.

\begin{lem} \label{lem:disjoint_cycles}
    For any $g \in G$, both $\mc F_Q(g)$ and $\mc F_P(g)$ are disjoint unions of cycle graphs.
\end{lem}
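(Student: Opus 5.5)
The plan is to show that in each forbiddance graph every vertex has exactly one outgoing edge and exactly one incoming edge. The outgoing and incoming maps are then bijections of $G$, so the graph is the functional graph of a permutation. Here a ``cycle'' may have length $1$ (a loop) or length $2$ (a pair of edges $x\to y$, $y\to x$), exactly as in \Cref{fig:FP_S4}.

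For $\mc F_P(g)$, fix $x \in G$. The condition $xy=g$ has the unique solution $y = x^{-1}g$, so $x$ has exactly one out-edge, namely $x \to \phi(x)$ with $\phi(x) := x^{-1}g$. Dually, a vertex $y$ receives an edge from $x$ exactly when $x = gy^{-1}$, so $y$ has exactly one in-edge. Hence $\phi\colon G\to G$ is a bijection, with inverse $y\mapsto gy^{-1}$. The edge set of $\mc F_P(g)$ is precisely $\{x\to \phi(x): x\in G\}$.

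For $\mc F_Q(g)$ the argument is the same with $\psi(x) := g^{-1}x$. Indeed, $xy^{-1}=g$ holds iff $y = g^{-1}x$. The map $\psi$ is left multiplication by $g^{-1}$, which is clearly a bijection with inverse $x\mapsto gx$.

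Finally, I will invoke the standard fact that the functional graph of a permutation $\pi$ of a finite set decomposes into the disjoint cycles of $\pi$. Starting from any $x$, the orbit $x,\pi(x),\pi^2(x),\dots$ returns to $x$ after $k$ steps, where $k\ge1$ is the orbit length, giving a directed cycle $C_k$. Distinct orbits are disjoint, and every edge lies inside an orbit. So the graph is the disjoint union of the cycle graphs on the orbits of $\phi$ (respectively $\psi$).

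I expect no real obstacle. The only point needing care is the convention for degenerate cycles: fixed points give loops, and $2$-cycles give a pair of opposite edges. These should be stated as allowed cycle graphs, consistent with the definition of independent sets permitting $x=y$. I will also remark that for $\mc F_Q(g)$ with $g\ne e$, every cycle has length $\ord(g)$, while for $g=e$ every vertex carries a loop. This will be useful for later counting.
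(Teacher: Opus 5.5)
Your proposal is correct and takes essentially the same approach as the paper. Both identify the unique out-neighbour maps $x\mapsto x^{-1}g$ and $x\mapsto g^{-1}x$, note that these are bijections of the finite set $G$, and read off the graphs as unions of the orbits of these permutations; your explicit inverse $y\mapsto gy^{-1}$ usefully fills in the bijectivity of $x\mapsto x^{-1}g$, which the paper only asserts.
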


\begin{proof}
    For $x \in G$, define
    \begin{align}
        q_g(x)\ :=\ q_g^{(1)}(x)\ :=\ g^{-1}x, \quad \quad &q_g^{(n)}(x)\ :=\ q_g(q_g^{(n-1)}(x)),
        \\ p_g(x)\ :=\ p_g^{(1)}(x)\ :=\ x^{-1}g, \quad \quad &p_g^{(n)}(x)\ :=\ p_g(p_g^{(n-1)}(x)).
    \end{align}
    Then $y=q_g(x)$ is the unique element satisfying $xy^{-1}=g$, and similarly $y=p_g(x)$ is the unique element satisfying $xy=g$. Thus, the forbiddance graph $\mc F_Q(g)$ locally around $x$ looks like
    \begin{align}
        x \ \to\ q_g^{(1)}(x)\ \to\ q_g^{(2)}(x)\ \to\ \cdots, \label{eq:FQ_path}
    \end{align}
    and this path must eventually loop back around to $x$, since $q_g$ is a bijection and $G$ is finite. The same holds for $p_g$. It follows that both forbiddance graphs are composed of disjoint cycles.
\end{proof}

The functions $q_g^{(n)}(x)$ and $p_g^{(n)}(x)$ play a central role later, so we record explicit formulas for them.
\begin{lem} \label{lem:pgqg_forms}
    For any $n \geq 1$,
    \begin{align}
        q_g^{(n)}(x)\ =\ g^{-n}x. \label{eq:qg_form}
    \end{align}
    Moreover, 
    \begin{align}
        p_g^{(2n-1)}(x)\ &=\ g^{-n+1}x^{-1}g^{n}, \label{eq:pg_odd_form}
        \\ p_g^{(2n)}(x)\ &= \  g^{-n}xg^n .\label{eq:pg_even_form}
    \end{align}
\end{lem}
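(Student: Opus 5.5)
The plan is to argue by induction on $n$, using only the definitions $q_g(x)=g^{-1}x$ and $p_g(x)=x^{-1}g$ from the proof of \Cref{lem:disjoint_cycles}.

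For \eqref{eq:qg_form}, the base case $n=1$ is the definition. Assuming $q_g^{(n-1)}(x)=g^{-(n-1)}x$, we get $q_g^{(n)}(x)=g^{-1}\cdot g^{-(n-1)}x=g^{-n}x$ by associativity.

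For the formulas involving $p_g$, I would run a single induction that handles the odd and even iterates together. The base case $n=1$ of \eqref{eq:pg_odd_form} reads $p_g^{(1)}(x)=g^{0}x^{-1}g^{1}=x^{-1}g$, which is the definition. Next I would pass from odd to even. Assuming \eqref{eq:pg_odd_form} holds for $n$,
\begin{align*}
p_g^{(2n)}(x)\ =\ \paren{g^{-n+1}x^{-1}g^{n}}^{-1}g\ =\ g^{-n}xg^{n-1}g\ =\ g^{-n}xg^{n},
\end{align*}
which is \eqref{eq:pg_even_form}. Then I would pass from even to odd. Assuming \eqref{eq:pg_even_form} holds for $n$,
\begin{align*}
p_g^{(2n+1)}(x)\ =\ \paren{g^{-n}xg^{n}}^{-1}g\ =\ g^{-n}x^{-1}g^{n}g\ =\ g^{-(n+1)+1}x^{-1}g^{n+1},
\end{align*}
which is \eqref{eq:pg_odd_form} with $n+1$ in place of $n$. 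Together these steps cover every $n\ge1$.

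There is no real obstacle here. The only point needing care is the inverse of a product, $(abc)^{-1}=c^{-1}b^{-1}a^{-1}$ in a non-abelian group, and keeping the exponent bookkeeping straight when stepping between the odd and even indices.
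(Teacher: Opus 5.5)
Your proposal is correct and takes the same approach as the paper: induction with the base cases given by the definitions, the one-line computation for $q_g^{(n)}$, and the alternating odd-to-even and even-to-odd steps for $p_g$, using $(abc)^{-1}=c^{-1}b^{-1}a^{-1}$. Your exponent bookkeeping checks out.
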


\begin{proof}
We proceed by induction. The base case for $q_g^{(1)}$ is by definition, and then
\begin{align}
    q_g^{(n)}(x)\ =\ q_g^{(1)}(q_g^{(n-1)}(x))\ =\ g^{-1}(g^{-(n-1)}x)\ =\ g^{-n}x.
\end{align}
For $p_g$, the base case $p_g^{(1)}$ is by definition. Then
\begin{align}
    p_g^{(2n)}(x)\ &=\ p_g(p_g^{(2n-1)}(x))\ =\ p_g(g^{-n+1}x^{-1}g^n) \notag
    \\ &=\ (g^{-n+1}x^{-1}g^n)^{-1}g\ =\ g^{-n}xg^n,
\end{align}
and
\begin{align}
p^{(2n+1)}_g(x)\ &=\ p_g(p_g^{(2n)}(x))\ =\ p_g(g^{-n}xg^n) \notag
\\ &=\ (g^{-n}xg^n)^{-1}g\ =\ g^{-n}x^{-1}g^{n+1},
\end{align}
as desired.
\end{proof}

\begin{rmk} \label{rmk:lucas_forbiddance}
It is known that the number of independent sets of a cycle graph of length $n$ is the $n\th$ Lucas number\footnote{$L_1=1$, $L_2=3$, and $L_{n+2}=L_{n+1}+L_n$ for $n \geq 1$.} $L_n$, where a cycle of length $1$ is a single vertex with a loop and a cycle of length $2$ is a single edge. Moreover, if $\Gamma := \Gamma_1 \sqcup \Gamma_2$ is the disjoint union of two component graphs, then 
\begin{align} \label{eq:indeps_in_disj_graphs}
    i(\Gamma)\ =\ i(\Gamma_1)\cdot i(\Gamma_2).
\end{align}
See \cite{ZhaoGroups} for proofs. Thus, to find $i(\mc F_P(g))$ and $i(\mc F_Q(g))$, we must compute the lengths of their component cycles.
\end{rmk}

We first analyze $\mc F_Q(g)$ by restating \cite[Lemma 3.3]{ZhaoGroups}. We also reprove it for completeness, as the proof is short and the strategy is important.
\begin{lem} \label{lem:FQ_form}
    For a finite group $G$, and $g \in G$,
    \begin{align}
        i(\mc F_{Q}(g))\ =\ L_{\ord(g)}^{|G|/\ord(g)}.
    \end{align}
\end{lem}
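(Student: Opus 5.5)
By \Cref{lem:disjoint_cycles}, $\mc F_Q(g)$ is a disjoint union of cycles. Hence by \Cref{rmk:lucas_forbiddance} it suffices to determine the length of every cycle. The claim is that every cycle has length exactly $\ord(g)$, so that there are $|G|/\ord(g)$ of them.

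Fix $x \in G$ and follow the path in \eqref{eq:FQ_path}. By \Cref{lem:pgqg_forms}, $q_g^{(n)}(x) = g^{-n}x$. The cycle through $x$ closes at the least $n \geq 1$ with $q_g^{(n)}(x) = x$, that is, with $g^{-n}x = x$. Cancelling $x$ on the right, this is $g^{-n} = e$, i.e.\ $g^n = e$, and the least such $n$ is $\ord(g)$. The argument is independent of $x$, so every component cycle has length $\ord(g)$.

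One should check the degenerate cases match the conventions of \Cref{rmk:lucas_forbiddance}:
\begin{itemize}
\item If $g = e$, then $x \to x$ is a loop, which is a cycle of length $1$. This agrees with $L_1 = 1$, since a looped vertex cannot be in an independent set.
\item If $\ord(g) = 2$, the cycle $x \to g^{-1}x \to x$ is a single (doubled) edge $\{x, gx\}$. This is a cycle of length $2$ with $L_2 = 3$ independent sets.
\end{itemize}
These conventions make the Lucas count valid for all lengths. Alternatively, the components are exactly the right cosets $\langle g\rangle x$, which makes both the count $|G|/\ord(g)$ and the length transparent.

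Finally, apply \eqref{eq:indeps_in_disj_graphs} multiplicatively over the $|G|/\ord(g)$ components, each contributing $L_{\ord(g)}$. This gives
\[
i(\mc F_Q(g)) = L_{\ord(g)}^{|G|/\ord(g)}.
\]
There is no real obstacle. The only point needing care is the treatment of small cycle lengths, where directed edges collapse to loops or single edges, and the remark's conventions already handle that.
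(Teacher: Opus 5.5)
Your proof is correct and follows essentially the same route as the paper: you use $q_g^{(n)}(x)=g^{-n}x$ to show that every cycle of $\mc F_Q(g)$ has length $\ord(g)$, so there are $|G|/\ord(g)$ cycles, and then apply the Lucas count together with multiplicativity over disjoint components. Your extra remarks, namely the degenerate cases $\ord(g)\in\{1,2\}$ and the observation that the components are the cosets $\angles{g}x$, are accurate elaborations of steps the paper leaves implicit.
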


\begin{proof}
    For any element $x \in G$, the length of the cycle in $\mc F_Q(g)$ containing $x$ is precisely the minimal $\ell \geq 1$ such that $q_g^{(\ell)}(x) = x$. By \Cref{lem:pgqg_forms}, this equates to the minimal $\ell$ for which $g^{-\ell}x=x$, which is $\ord (g)$. Since every vertex in $\mc F_Q(g)$ is therefore contained in a cycle of length $\ord(g)$, it follows that $\mc F_Q(g)$ is precisely composed of $|G|/\ord(g)$ cycles of length $\ord (g)$, so the claim follows by \Cref{rmk:lucas_forbiddance} and \eqref{eq:indeps_in_disj_graphs}.
\end{proof}

To understand the behavior of $i(\mc F_{Q}(g))$ (and later $i(\mc F_P(g))$), we aim to know when it is maximized (i.e., what graph configurations yields the highest number of independent sets). For this, we have the following lemma.

\begin{lem} \label{lem:lucas_asym}
    Let $N$ be a positive integer, and let $a_1,\dots,a_j$ be positive integers with $a_i\neq 2$ for all $i$, and $a_1+\cdots + a_j = N$. Then
    \begin{align}
        L_{a_1}\cdot L_{a_2}\cdots L_{a_j}\ \leq \ 7^{N/4}.
    \end{align}
\end{lem}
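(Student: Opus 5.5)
The bound factors over the parts $a_i$, so it suffices to prove the single-term inequality
\begin{align}
    L_n\ \leq\ 7^{n/4} \qquad \text{for every positive integer } n \neq 2. \label{eq:single_lucas}
\end{align}
Multiplying \eqref{eq:single_lucas} over $n=a_1,\dots,a_j$ then gives
\begin{align}
    L_{a_1}\cdots L_{a_j}\ \leq\ 7^{(a_1+\cdots+a_j)/4}\ =\ 7^{N/4},
\end{align}
which is the claim. The exclusion $a_i \neq 2$ is exactly what is needed, since $L_2 = 3 > 7^{1/2}\approx 2.646$. Heuristically, $7^{1/4}$ is the maximum of $L_n^{1/n}$ over $n\neq 2$, attained at $n=4$ with $L_4=7$.

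To prove \eqref{eq:single_lucas}, I first check the small cases directly:
\begin{itemize}
    \item $L_1=1\le 7^{1/4}$;
    \item $L_3=4\le 7^{3/4}\approx 4.30$;
    \item $L_4=7=7^{4/4}$, with equality;
    \item $L_5=11\le 7^{5/4}\approx 11.39$.
\end{itemize}
For $n\ge 6$ I use strong induction from the recurrence $L_n = L_{n-1}+L_{n-2}$. Suppose $L_{n-1}\le 7^{(n-1)/4}$ and $L_{n-2}\le 7^{(n-2)/4}$; both indices are at least $4$, so neither is $2$ and the hypothesis applies. Then
\begin{align}
    L_n\ \leq\ 7^{n/4}\paren{7^{-1/4}+7^{-1/2}}.
\end{align}
So it remains to check $7^{-1/4}+7^{-1/2}\le 1$. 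Setting $t=7^{-1/4}\approx 0.6148$, we need $t+t^2\le 1$, that is, $t\le (\sqrt5-1)/2\approx 0.618$, which holds.

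The main obstacle is that this margin is very thin ($0.6148$ against $0.618$). Numerics should be stated with care: for instance, observe that $t\le 0.618$ is equivalent to $0.618^4\ge 1/7$, and $0.618^4\approx 0.1459>0.1429$. Also, the induction must start at $n=6$ using the base cases $n=4,5$, so that the bad value $L_2$ is never invoked.

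Alternatively, the closed form $L_n=\varphi^n+\psi^n$ gives $L_n\le \varphi^n+1$, with $\varphi^{4}\approx 6.85<7$. However, this bound needs separate handling of the $+1$ term for small $n$, so the inductive route above is cleaner.
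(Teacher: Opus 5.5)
Your proof is correct, but it takes a different route from the paper. The paper uses Binet's formula $L_n=\phi^n+(1-\phi)^n$. Since $L_4=7$ and the alternating term satisfies $(1-\phi)^n<(1-\phi)^4$ for $n>4$, it gets $L_n\le 7^{n/4}$ for all $n\ge 4$ at once, and then checks $L_1$ and $L_3$ by hand.

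You instead run strong induction on the recurrence, with base cases $n=4,5$ and with $n=1,3$ checked separately. This reduces everything to $7^{-1/4}+7^{-1/2}\le 1$, which is the same mechanism the paper itself uses later for the Fibonacci bound in \Cref{lem:path_bound}.

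Each approach buys something different:
\begin{itemize}
\item Your argument is elementary and every step is written out. The paper's Binet argument is shorter, but it leaves implicit how $(1-\phi)^n<(1-\phi)^4$ gives $\phi^n+(1-\phi)^n\le\paren{\phi^4+(1-\phi)^4}^{n/4}=7^{n/4}$. One needs a small step such as $(a+b)^p\ge a^p+p\,a^{p-1}b\ge a^p+b$ for $a\ge 1$, $b\ge 0$, $p\ge 1$.
\item The paper's version explains why $7^{1/4}$ is the natural constant: $L_n^{1/n}\to\phi<7^{1/4}$.
\end{itemize}

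Your concern about the thin numerical margin is unnecessary. For $t>0$, the condition $t+t^2\le 1$ is equivalent to $t\le(\sqrt5-1)/2=1/\phi$. With $t=7^{-1/4}$, this is $\phi^4\le 7$, which holds exactly because $\phi^4=L_4-(1-\phi)^4<7$. So the two approaches rest on the same fact, and your proof needs no decimal estimates at all.
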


\begin{proof}
Let $\phi = (1+\sqrt 5)/2$ be the golden ratio. It is well known that
\begin{align}
    L_n\ =\ \phi^n + (1-\phi)^n,
\end{align}
where notably $|1-\phi|<1$, so $L_n \sim \phi^n$. Moreover, since $1-\phi <0$, one sees that $L_n > \phi^n \iff n$ is even. Thus, as $L_4=7$, and $(1-\phi)^n < (1-\phi)^4$ for all $n>4$, we get that
\begin{align}
    L_n\ \leq\ \paren{7^{1/4}}^n
\end{align}
for all $n \geq 4$. This inequality also holds for $L_1$ and $L_3$. As $a_i \neq 2$ for all $i$, we have
\begin{align}
    \prod_{i=1}^jL_{a_i}\ \leq \ \prod_{i=1}^j 7^{a_i/4}\ =\ 7^{N/4}.
\end{align}
% Finally, we compute that $7^{1/4}<3^{1/2}$, hence
% \begin{align}
%     \lim_{N \to \infty}\frac{7^{N/4}}{3^{N/2}}=0.
% \end{align}
\end{proof}

The exclusion of $a_i=2$ is important here, as a graph with $N/2$ two-cycles has $L_2^{N/2}=\sqrt 3^{\,N}$ independent sets, which is larger than $7^{N/4}$ (in the sense of little-o) since $7^{1/4}<\sqrt 3$. We use this fact to show that, loosely speaking, group elements $g$ with $\mc F_Q(g)$ or $\mc F_P(g)$ composed almost entirely of two-cycles have the greatest contribution in terms of number of independent sets.

\begin{cor} \label{cor:FQ_asym}
    Let $G$ be a finite group of size $N$, and let $T$ be the number of elements of $G$ with order $2$. Then
    \begin{align}
        T \cdot \sqrt 3^{N}\ \leq\ \sum_{g \in G}i(\mc F_Q(g))\ \leq\ T \cdot \sqrt 3^{N} + (N-T)\cdot 7^{N/4}. \label{eq:FQ_asym}
    \end{align}
\end{cor}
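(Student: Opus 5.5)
We split the sum $\sum_{g\in G} i(\mc F_Q(g))$ according to whether $\ord(g)=2$, and evaluate each term with \Cref{lem:FQ_form}. That lemma gives $i(\mc F_Q(g)) = L_{\ord(g)}^{N/\ord(g)}$. Each term therefore depends only on the order of $g$, and the cycle structure of $\mc F_Q(g)$ consists of $N/\ord(g)$ cycles, each of length $\ord(g)$.

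For the $T$ elements of order $2$, the graph $\mc F_Q(g)$ is a disjoint union of $N/2$ two-cycles (single edges). Since $L_2 = 3$, each such $g$ contributes exactly $3^{N/2} = \sqrt 3^{\,N}$. These elements together contribute exactly $T\cdot\sqrt3^{\,N}$.

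All remaining terms are nonnegative, since they count independent sets. Hence the lower bound in \eqref{eq:FQ_asym} follows immediately.

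For the upper bound, consider any of the remaining $N-T$ elements, so $\ord(g)\neq 2$. Here $\mc F_Q(g)$ consists of $N/\ord(g)$ cycles, each of length $a=\ord(g)\neq 2$, with total length $N$. We apply \Cref{lem:lucas_asym} with $a_1=\cdots=a_j=\ord(g)$ and $j = N/\ord(g)$. This gives $i(\mc F_Q(g)) = L_{\ord(g)}^{N/\ord(g)} \le 7^{N/4}$.

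The identity element is one such element. It has $\ord(e)=1$, and $\mc F_Q(e)$ consists of $N$ loops, with $i=L_1^N=1\le 7^{N/4}$. This case is also covered by the lemma, since $L_1=1\le 7^{1/4}$.

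Summing over the $N-T$ elements of order not equal to $2$ gives at most $(N-T)\cdot 7^{N/4}$. Adding the exact contribution $T\cdot\sqrt3^{\,N}$ from the order-$2$ elements yields the upper bound.

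There is no real obstacle. The only point requiring care is checking that the hypothesis $a_i\neq 2$ of \Cref{lem:lucas_asym} holds, which is exactly the $\ord(g)\neq 2$ case. We should also confirm that $\ord(g)$ divides $N$, so that the cycle lengths sum to $N$; this follows from Lagrange's theorem, or directly from the cycle decomposition in \Cref{lem:FQ_form}.
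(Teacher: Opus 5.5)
Your proposal is correct and follows essentially the same route as the paper. Like the paper, you split the sum by whether $\ord(g)=2$ and evaluate each term with \Cref{lem:FQ_form}; the order-$2$ terms give exactly $T\sqrt 3^{\,N}$, and each remaining term is nonnegative and bounded by $7^{N/4}$ via \Cref{lem:lucas_asym}.
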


\begin{proof}
    By \Cref{lem:FQ_form},
    \begin{align}
        \sum_{g \in G}i(\mc F_Q(g))\ &=\ \sum_{g \in G}L_{\ord(g)}^{N/\ord(g)}\ \notag 
        \\ &=\ \, \sum_{\mathclap{\substack{g \in G \\ \ord(g)=2}}} \:L_2^{N/2} + \: \: \sum_{\mathclap{\substack{g \in G \\ \ord (g) \neq 2}}} \: L_{\ord(g)}^{N/\ord(g)} \notag
        \\ &=\ T \sqrt 3^N + \: \: \sum_{\mathclap{\substack{g \in G \\ \ord(g) \neq 2}}}\: L_{\ord(g)}^{N/\ord(g)}.
    \end{align}
    For each of the $N-T$ elements $g$ with $\ord(g)\neq 2$, the graph $\mc F_Q(g)$ consists of $N/\ord(g)$ cycles of length $\ord (g) \neq 2$, and these lengths sum to $N$, so $L_{\ord(g)}^{N/\ord(g)} \leq 7^{N/4}$ by \Cref{lem:lucas_asym}. The remaining summation is therefore at most $(N-T)\cdot 7^{N/4}$, and it is clearly non-negative, which gives both bounds in \eqref{eq:FQ_asym}.
\end{proof}

Understanding $i(\mc F_P(g))$ is more difficult, since the length of the cycle containing $x\in G$ is the minimal $\ell\geq 1$ with $p_g^{(\ell)}(x)=x$, and by \eqref{eq:pg_odd_form} and \eqref{eq:pg_even_form} this depends on both $x$ and $g$ in a less uniform way. However, the number of independent sets is still a product of Lucas numbers whose indices sum to $|G|$, as in \Cref{lem:lucas_asym}. Since \Cref{cor:FQ_asym} demonstrated that the $2$-cycles give the dominant contribution, we want to study the $2$-cycles in $\mc F_P(g)$ as well. 

\begin{lem} \label{lem:FP_asym}
    Let $G$ be a finite group of size $N$ with center $Z(G)$. Let $Z:=|Z(G)|$, and let
    \begin{align}
        R\ :=\ \#\{x \in G: x^2 \in Z(G)\}.
    \end{align}
    Then
    \begin{align}
       Z\cdot \sqrt 3^{N-R/Z} \ \leq \ \sum _{g \in G}i(\mc F_P(g)) \ \leq\ Z\cdot \sqrt 3^N + (N-Z)\cdot 1.68^N. \label{eq:FP_asym}
    \end{align}
    Moreover, for every $g \not \in Z(G)$,
    \begin{align}
        i(\mc F_P(g))\ \leq\ \paren{3^{1/4}\cdot 7^{1/8}}^N\ <\ 1.68^N. \label{eq:noncentral_P}
    \end{align}
\end{lem}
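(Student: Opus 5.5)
The plan is to read off the cycle structure of $\mc F_P(g)$ from \Cref{lem:pgqg_forms}, and then apply \Cref{rmk:lucas_forbiddance} and \Cref{lem:lucas_asym} exactly as in \Cref{cor:FQ_asym}. By \eqref{eq:pg_odd_form} with $n=1$, a vertex $x$ lies in a $1$-cycle (a loop) iff $p_g(x)=x^{-1}g=x$, i.e.\ iff $x^2=g$. By \eqref{eq:pg_even_form} with $n=1$, we have $p_g^{(2)}(x)=x$ iff $g^{-1}xg=x$, i.e.\ iff $x\in C_G(g)$. Hence $x$ lies in a $2$-cycle iff $x\in C_G(g)$ and $x^2\neq g$. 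Every other vertex lies in a cycle of length at least $3$.

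\textbf{Central $g$.} If $g\in Z(G)$, then $C_G(g)=G$, so every cycle of $\mc F_P(g)$ has length $1$ or $2$. Write $s(g):=\#\{x: x^2=g\}$. There are $s(g)$ loops, and the remaining $N-s(g)$ vertices form $2$-cycles. Since $L_1=1$ and $L_2=3$, this gives
\begin{align}
i(\mc F_P(g))\ =\ \sqrt 3^{\,N-s(g)}.
\end{align}
In particular $i(\mc F_P(g))\le \sqrt3^N$ for each central $g$, which yields the $Z\cdot\sqrt3^N$ term of the upper bound.

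For the lower bound, discard the non-central terms, which are non-negative. Then apply AM--GM (equivalently, convexity of $t\mapsto \sqrt3^{-t}$) to the $Z$ central terms:
\begin{align}
\sum_{g\in Z(G)}\sqrt3^{\,N-s(g)}\ \ge\ Z\cdot \sqrt3^{\,N-\frac1Z\sum_{g\in Z(G)}s(g)}.
\end{align}
Finally, $\sum_{g\in Z(G)} s(g)=\#\{x: x^2\in Z(G)\}=R$, because each $x$ is counted once, under $g=x^2$.

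\textbf{Non-central $g$.} Let $k$ be the number of vertices lying in $2$-cycles. By the characterization above, $k\le |C_G(g)|$. Since $g\notin Z(G)$, $C_G(g)$ is a proper subgroup, so $k\le N/2$ by Lagrange. The $2$-cycles contribute $3^{k/2}$. The remaining components have lengths $\neq 2$ summing to $N-k$, so by \Cref{lem:lucas_asym} they contribute at most $7^{(N-k)/4}$. Therefore
\begin{align}
i(\mc F_P(g))\ \le\ 3^{k/2}\,7^{(N-k)/4}\ =\ 7^{N/4}\paren{\tfrac{\sqrt3}{7^{1/4}}}^{k}.
\end{align}
Since $\sqrt3>7^{1/4}$, the right-hand side is increasing in $k$. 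Taking $k=N/2$ gives the bound $(3^{1/4}7^{1/8})^N$. Numerically, $3^{1/4}7^{1/8}\approx 1.316\cdot 1.275\approx 1.678<1.68$, which proves \eqref{eq:noncentral_P}. Summing this bound over the $N-Z$ non-central elements completes the upper bound in \eqref{eq:FP_asym}.

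The main obstacle is the non-central case. The key point is that $2$-cycles are exactly the centralizer elements not squaring to $g$, so they cover at most half the vertices. The rest is bookkeeping with \Cref{lem:lucas_asym}, where the mixing of $2$-cycles and other cycles is handled by the monotonicity in $k$.
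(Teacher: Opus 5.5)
Your proposal is correct and follows essentially the same argument as the paper. You get $i(\mc F_P(g))=\sqrt3^{\,N-s(g)}$ for central $g$ from $p_g^{(2)}(x)=g^{-1}xg$, confine the $2$-cycles to the proper subgroup $C_G(g)$ when $g$ is non-central and apply \Cref{lem:lucas_asym} with monotonicity in $k$, and obtain the lower bound from convexity; your AM--GM step is equivalent to the paper's use of Jensen's inequality.
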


\begin{proof}
Suppose $g \in Z(G)$. By \eqref{eq:pg_even_form}, $p_g^{(2)}(x) = g^{-1}xg$. Thus, as $gx=xg$, we know every $x\in G$ is contained in either a $1$-cycle or a $2$-cycle in $\mc F_P(g)$ by the proof of \Cref{lem:disjoint_cycles}. In particular, $x$ is contained in a $2$-cycle if and only if $p_g^{(1)}(x)\neq x$, which is equivalent to $x^2\neq g$. Let 
\begin{align}
    r(g)\ :=\ \# \{x \in G: x^2=g\}.
\end{align}
Then, the forbiddance graph $\mc F_P(g)$ is composed of $r(g)$ $1$-cycles and $(N-r(g))/2$ two-cycles. Therefore, for any $g \in Z(G)$,
\begin{align}
    i(\mc F_P(g))\ =\ L_1^{r(g)}\cdot L_2^{\frac{N-r(g)}{2}}\ =\ \sqrt 3^{N-r(g)}.
\end{align}
On the other hand, suppose $g \not \in Z(G)$. By \eqref{eq:pg_even_form}, $p_g^{(2)}(x)=x$ if and only if $xg=gx$, so every vertex lying in a $2$-cycle of $\mc F_P(g)$ belongs to the centralizer $C_G(g)$. Let $k$ be the number of such vertices. As $g$ is non-central, $C_G(g)$ is a proper subgroup of $G$, hence $k \leq |C_G(g)|\leq N/2$. The $2$-cycles contribute $L_2^{k/2}=\sqrt 3^{\,k}$ independent sets, while the remaining cycles have lengths different from $2$ summing to $N-k$, so by \Cref{lem:lucas_asym} they contribute at most $7^{(N-k)/4}$. Since $\sqrt 3 > 7^{1/4}$, the resulting bound increases with $k$, and therefore
\begin{align}
    i(\mc F_P(g))\ \leq\ \sqrt 3^{\,k}\cdot 7^{(N-k)/4}\ \leq\ \sqrt 3^{\,N/2}\cdot 7^{N/8}\ =\ \paren{3^{1/4}\cdot 7^{1/8}}^N\ <\ 1.68^N,
\end{align}
which proves \eqref{eq:noncentral_P}. Splitting the sum over the central and non-central elements of $G$ now gives
\begin{align}
    \sum_{g \in G}i(\mc F_P(g))\ &=\ \sum_{g \in Z(G)}\sqrt 3^{N-r(g)} + \sum_{g \not \in Z(G)}i(\mc F_P(g)). \label{eq:FP_true_sum}
\end{align}
Since $\sqrt 3^{N-r(g)}\leq \sqrt 3^N$, and since each of the $N-Z$ non-central elements contributes at most $1.68^N$ by \eqref{eq:noncentral_P}, we have
\begin{align}
    \sum_{g \in G}i(\mc F_P(g))\ \leq\ Z\cdot \sqrt 3^N + (N-Z)\cdot 1.68^N,
\end{align}
which is the second inequality in \eqref{eq:FP_asym}. To obtain the lower bound, we discard the non-negative second sum in \eqref{eq:FP_true_sum} and observe that $f(x):=\sqrt 3^{N-x}$ is a convex function. Therefore, by Jensen's inequality \cite{jensen1906, stein2005real},
\begin{align}
    \frac{1}{Z}\sum_{g \in Z(G)}\sqrt 3^{N-r(g)}\ \geq\ \sqrt 3^{ N-(1/Z)\sum_{g \in Z(G)}r(g)}\ =\ \sqrt 3^{N-R/Z}, \label{eq:jensen}
\end{align}
since $\sum_{g \in Z(G)}r(g) = R$. Thus, by \eqref{eq:FP_true_sum},
\begin{align}
    \sum_{g \in G}i(\mc F_P(g))\ \geq\ Z\cdot \sqrt 3^{N-R/Z}
\end{align}
as desired.
\end{proof}

%%%%%%%%%%%%%%%%%%%%%%%%%%%%%%%%%%%%%%%%%%%%%%%%%%%%%%%%%%%%%%%%%%%%%%
\section{\texorpdfstring{The second moment $M_2$}{The Second Moment M2}} \label{sec:M2}

Similar to the first moment, we reduce the study of the second moment to evaluating the number of independent sets of forbiddance graphs.

\begin{defn}
    We define three more forbiddance graphs as in \Cref{def:forb_1}. Once again, the vertices of these graphs are the elements of $G$, and for fixed $g,h \in G$, we define
    \begin{align}
         &\mc F_{PP}(g,h) \text{ has } x \to y \text{ whenever } xy=g \text{ or } xy=h,
    \\ &\mc F_{QQ}(g,h) \text{ has } x \to y \text{ whenever } xy^{-1}=g \text{ or } xy^{-1}=h,
    \\ &\mc F_{PQ}(g,h) \text{ has } x \to y \text{ whenever } xy=g \text{ or } xy^{-1}=h.    
    \end{align}
\end{defn}

\begin{rmk} \label{rmk:two_elm_indep}
   Similar to before, the number of independent sets of these forbiddance graphs relates to cardinalities of subsets satisfying certain element exclusion properties. For a finite group $G$, and $g,h \in G$,
    \begin{align}
 i(\mc F_{PP}(g,h))\ &=\ \# \{A \subseteq G: g \not \in AA, h \not \in AA\},
    \\ i(\mc F_{QQ}(g,h))\ &=\ \# \{A \subseteq G: g \not \in AA^{-1}, h \not \in AA^{-1}\},
    \\ i(\mc F_{PQ}(g,h))\ &=\ \# \{A \subseteq G: g \not \in AA, h \not \in AA^{-1}\}.
\end{align}
\end{rmk}

\begin{lem} \label{lem:2nd_mom}
    Recall $M_2(G):= \sum_{A \subseteq G}(d(A))^2$. For any finite group $G$,
    \begin{align}
        M_2(G)\ =\ \sum_{g,h \in G}\braks{i(\mc F_{PP}(g,h))+ i(\mc F_{QQ}(g,h))- 2 i(\mc F_{PQ}(g,h))}.
    \end{align}
\end{lem}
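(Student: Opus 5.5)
We expand $d(A)^2$ using indicator functions, in the same way \eqref{eq:exp_breakdown} handled $M_1$. Write
\begin{align}
    d(A)\ =\ \sum_{g \in G}\paren{\mb 1_{AA}(g) - \mb 1_{AA^{-1}}(g)}\ =\ \sum_{g\in G}\paren{\mb 1_{AA^{-1}}^c(g) - \mb 1_{AA}^c(g)},
\end{align}
where $\mb 1^c_S := 1-\mb 1_S$ is the indicator of the complement. The constant terms cancel, and this is exactly why the complement form is convenient. Squaring gives a double sum over $(g,h)\in G\times G$:
\begin{align}
    d(A)^2\ =\ \sum_{g,h\in G}\paren{\mb 1^c_{AA}(g)\mb 1^c_{AA}(h) + \mb 1^c_{AA^{-1}}(g)\mb 1^c_{AA^{-1}}(h) - \mb 1^c_{AA}(g)\mb 1^c_{AA^{-1}}(h) - \mb 1^c_{AA^{-1}}(g)\mb 1^c_{AA}(h)}.
\end{align}

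Next we sum over all $A\subseteq G$ and swap the order of summation. Each product of two complement indicators counts subsets missing both specified elements. By \Cref{rmk:two_elm_indep}, the first term summed over $A$ gives $i(\mc F_{PP}(g,h))$. The second gives $i(\mc F_{QQ}(g,h))$. The third gives $i(\mc F_{PQ}(g,h))$. The fourth gives $\#\{A : g\notin AA^{-1},\ h\notin AA\} = i(\mc F_{PQ}(h,g))$.

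Finally, since $(g,h)$ ranges over all of $G\times G$, relabeling $(g,h)\mapsto(h,g)$ in the fourth sum makes it equal to the third. The two cross terms therefore combine to $-2\sum_{g,h}i(\mc F_{PQ}(g,h))$, which yields the stated formula.

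No step here is a real obstacle. The only points needing care are, first, confirming that the $2^{|G|}$ terms cancel exactly (they do, because we subtract equal constants within each factor before squaring), and second, the symmetrization of the cross terms, which relies on summing over ordered pairs.
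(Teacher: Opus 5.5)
Your proof is correct and follows the same strategy as the paper: expand the square as a double sum of indicator products over ordered pairs $(g,h)$, swap the sums, and identify each count via \Cref{rmk:two_elm_indep}. The only difference is that you pass to complement indicators before squaring, so the $2^{|G|}$ constants and single-element terms never appear; the paper instead expands $p(A)^2+q(A)^2-2p(A)q(A)$ with membership indicators, applies inclusion--exclusion, and then cancels the leftover $i(\mc F_P)$ and $i(\mc F_Q)$ terms using the symmetry in $(g,h)$.
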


\begin{proof}
We proceed similarly to the first moment. Let $p(A) := |AA|$ and $q(A):= |AA^{-1}|$. Then
\begin{align}
    M_2(G)\ :=&\ \sum_{A \subseteq G}(p(A) - q(A))^2\ =\ \sum_{A \subseteq G}\braks{p(A)^2+q(A)^2 - 2p(A)q(A)}. \label{eq:binomvar}
\end{align}
Now, note that
\begin{align}
    p(A)^2\ =&\ \ \paren{\sum_{x}\mb 1_{AA}(x)}^2 = \sum_{g,h}\mb 1_{AA}(g)\mb 1_{AA}(h),
    \\ q(A)^2\ =&\ \ \paren{\sum_{x}\mb 1_{AA^{-1}}(x)}^2\ =\ \sum_{g,h}\mb 1_{AA^{-1}}(g)\mb 1_{AA^{-1}}(h),
    \\ p(A)q(A)\ =&\ \ \paren{\sum_{g}\mb 1_{AA}(g)}\paren{\sum_{h}\mb 1_{AA^{-1}}(h)}\ =\ \sum_{g,h}\mb 1_{AA}(g) \mb 1_{AA^{-1}}(h).
\end{align}
Thus, we rewrite \eqref{eq:binomvar} as 
\begin{align}
    &\ \ \sum_{A \subseteq G} \sum_{g,h}\braks{\mb 1_{AA}(g)\mb 1_{AA}(h) + \mb 1_{AA^{-1}}(g)\mb 1_{AA^{-1}}(h) - 2\cdot \mb 1_{AA}(g) \mb 1_{AA^{-1}}(h)} \notag
    \\ =&\ \ \sum_{g,h}\sum_{A \subseteq G}\braks{\mb 1_{AA}(g)\mb 1_{AA}(h) + \mb 1_{AA^{-1}}(g)\mb 1_{AA^{-1}}(h) - 2\cdot \mb 1_{AA}(g) \mb 1_{AA^{-1}}(h)} \notag
    \\ =&\ \ \sum_{g,h}\braks{ \# \{ A: g,h \in AA\} + \#\{A: g, h \in AA^{-1}\} -2 \# \{ A: g \in AA, h \in AA^{-1}\} }. \label{eq:varandsets}
\end{align}

Now, to obtain the expressions in \Cref{rmk:two_elm_indep}, we use the inclusion--exclusion principle to obtain
\begin{align}
    \# \{ A: g,h \in AA\}\ \ =&\ \ \ 2^{|G|}-\# \{ A: g\not \in AA \text{ or } h \not \in AA\}  \notag
    \\ =&\ \ \ 2^{|G|}-\#\{A: g \not \in AA\} - \#\{A: h \not \in AA\} \notag
    \\ &\ \ +\ \#\{ A: g \not \in AA \text{ and } h \not \in AA\} \notag
    \\ =&\ \ \ 2^{|G|}-i(\mathcal F_{P}(g)) - i(\mathcal F_{P}(h))  + i(\mathcal F_{PP}(g,h)), \label{eq:varprod}
\end{align}
and similarly
\begin{align}
    \# \{ A:g,h\in AA^{-1}\}\ =&\ \ 2^{|G|}-i(\mathcal F_Q(g)) - i(\mathcal F_Q(h))  + i(\mathcal F_{QQ}(g,h)), \label{eq:varquot}
    \\ \# \{A: g \in AA,h \in AA^{-1}\}\ =&\ \ 2^{|G|} - i(\mathcal F_P(g)) - i(\mathcal F_Q(h)) + i(\mathcal F_{PQ}(g,h)). \label{eq:varboth}
\end{align}
Plugging these into \eqref{eq:varandsets} yields (after rearranging and canceling)
\begin{align}
    M_2(G)\ =\ \ &\sum_{g,h}\braks{i(\mc F_P(g))- i(\mc F_P(h))+ i(\mc F_Q(h))-i(\mc F_Q(g))} \label{eq:varsingles} 
    \\ &+\ \sum_{g,h}\braks{i(\mc F_{PP}(g,h))+ i(\mc F_{QQ}(g,h))- 2 i(\mathcal F_{PQ}(g,h))}. \label{eq:vardoubles}
\end{align}
Notice that the entire sum in \eqref{eq:varsingles} cancels, and so we are left with
\begin{align}
    M_2(G)\ =\ \sum_{g,h}\braks{i(\mc F_{PP}(g,h))+ i(\mc F_{QQ}(g,h))- 2 i(\mathcal F_{PQ}(g,h))}, \label{eq:M2_final}
\end{align}
as desired.
\end{proof}

We aim to find upper bounds for $M_2(G)$ to satisfy \Cref{thm:dom_ineq}. From \Cref{lem:2nd_mom}, we may rewrite $M_2(G)$ as
\begin{align}
    M_2(G)\ =\ \ &\sum_{g \in G}\braks{i (\mc F_{P}(g)) + i(\mc F_Q(g)) - 2i\paren{\mc F_{PQ}(g,g)}}  \label{eq:M2_singles}
    \\ &+ \sum_{g\neq h \in G}\braks{i(\mc F_{PP}(g,h))+ i(\mc F_{QQ}(g,h))- 2 i(\mc F_{PQ}(g,h))}. \label{eq:M2_doubles}
\end{align}

It suffices for our purposes to remove the negative terms from \eqref{eq:M2_singles} and \eqref{eq:M2_doubles}, so
\begin{align}
    M_2(G)\ \leq \ \ &\sum_{g \in G}i (\mc F_{P}(g)) + \sum_{g \in G}i(\mc F_Q(g)) \label{eq:M2_less_singles}
    \\ &+ \sum_{g\neq h \in G}i(\mc F_{PP}(g,h))+ \sum_{g \neq h \in G}i(\mc F_{QQ}(g,h)). \label{eq:M2_less_doubles}
\end{align}

The behavior of the two sums in \eqref{eq:M2_less_singles} was studied in \Cref{lem:FP_asym} and \Cref{cor:FQ_asym}. To bound the two sums in \eqref{eq:M2_less_doubles}, we show that every term is exponentially smaller than $\sqrt 3^N$. For this, we need a result by Sah, Sawhney, Stoner, and Zhao \cite{zhao_strong_indep}.

\begin{thm}[\cite{zhao_strong_indep}] \label{thm:zhao_strong_indep}
Let $\Gamma=(V,E)$ be a simple, undirected graph with no isolated vertices. Let $d_v$ be the degree of a vertex $v \in V$. Then
    \begin{align}
        i(\Gamma) \ \leq \ \prod_{uv \in E} \paren{2^{d_u}+2^{d_v}-1}^{\frac{1}{d_ud_v}} .  \label{eq:zhao_strong_indep}
    \end{align}
\end{thm}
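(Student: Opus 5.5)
This is the theorem of Sah, Sawhney, Stoner, and Zhao, which settled Kahn's conjecture. In this paper we only use it as a black box in \Cref{sec:M2}. If I had to prove it, I would do so in two stages: first reduce to bipartite graphs, then prove the bipartite case by induction on the number of vertices, using H\"older's inequality at the inductive step.

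\emph{Reduction to the bipartite case.} I would use Zhao's bipartite swapping trick, which gives $i(\Gamma)^2 \leq i(\Gamma \times K_2)$, where $\Gamma\times K_2$ is the bipartite double cover. Its proof is an explicit injection from pairs of independent sets of $\Gamma$ into independent sets of $\Gamma\times K_2$. In $\Gamma\times K_2$:
\begin{itemize}
\item each vertex $v$ has two copies $v_0,v_1$, each of degree $d_v$;
\item each edge $uv$ of $\Gamma$ becomes the two edges $u_0v_1$ and $v_0u_1$, whose endpoints again have degrees $d_u,d_v$;
\item there are still no isolated vertices.
\end{itemize}
So if the bound holds for bipartite graphs, the right-hand side for $\Gamma\times K_2$ is exactly the square of the right-hand side of \eqref{eq:zhao_strong_indep} for $\Gamma$. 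Taking square roots then gives the general statement.

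\emph{Bipartite case.} Let $\Gamma$ be bipartite with parts $L,R$. I would strengthen the hypothesis so that it survives vertex deletion, i.e.\ prove the edge-product bound for graphs that may contain isolated vertices, each of which contributes a factor $2$. Pick a vertex $v\in L$ and split independent sets according to whether they contain $v$:
\begin{align*}
i(\Gamma)\ =\ i(\Gamma - v) + i(\Gamma - N[v]).
\end{align*}
Apply the inductive hypothesis to both graphs on the right. Deleting $v$ or $N[v]$ lowers the degrees of the neighbours of $v$ and of their other neighbours, so the factors $(2^{d_a}+2^{d_b}-1)^{1/(d_ad_b)}$ change only on edges within distance two of $v$. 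Dividing through by the common factors, the inequality needed is a local one: a sum of two products over the second neighbourhood of $v$ must be bounded by the corresponding product for $\Gamma$. An alternative I would keep in reserve is a Shearer/entropy argument in the style of Kahn and Galvin--Tetali. Conditioned on $I\cap L$, one bounds $H(I\cap R)$ vertex by vertex and then averages over edges with weights $1/(d_ud_v)$, which is exactly the form of the exponent.

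\emph{Main obstacle.} The hard step is the local inequality in the inductive step, since degrees are irregular. I would first choose $v$ cleverly, for example of maximum or minimum degree, so that the degree changes go in a controllable direction. I would then apply H\"older's inequality with exponents tied to $1/d_v$ and $1/d_u$ to merge the two terms into one product. What remains is a one-variable inequality for each neighbour $u$ of $v$, comparing expressions of the form $2^{d}+2^{d'}-1$ before and after decrementing degrees; I expect to check this by elementary calculus, and this is where essentially all the difficulty of the theorem lies. Since the result is taken from \cite{zhao_strong_indep}, for this paper it suffices to cite it and check its hypotheses when applying it: the forbiddance graphs must be simple, undirected, and without isolated vertices. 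Any loops there are handled separately, because a looped vertex can never lie in an independent set and can simply be deleted.
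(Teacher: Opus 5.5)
The paper gives no proof of this statement either---it is imported from \cite{zhao_strong_indep} and used only as a black box in \Cref{cor:Zhao_strong_indep}---so your conclusion that it suffices to cite it and check the hypotheses (simple, undirected, no isolated vertices, looped vertices removed first) is exactly the paper's approach. Your bipartite-swapping reduction is sound, but the inductive sketch is not a proof: the local inequality you defer is the whole content of \cite{zhao_strong_indep}, and it must hold with equality for every $K_{m,n}$ (deleting $v$ from the side of size $m$ gives $i(K_{m,n}) = i(K_{m-1,n}) + 2^{m-1}$, and the bound is attained by all three graphs); moreover it involves the second neighbours of $v$, whose degrees drop in $\Gamma - N[v]$, and your per-neighbour description of what remains after H\"older does not account for them.
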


\begin{cor} \label{cor:Zhao_strong_indep}
    Let $\Gamma = (V,E)$ be a simple, undirected graph with no isolated vertices such that $2 \leq d_v \leq 4$ for all $v \in V$. Then
    \begin{align}
        i(\Gamma)\ \leq\ 19^{|V|/6}. \label{eq:cor_zhao}
    \end{align}
\end{cor}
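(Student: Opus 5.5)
We apply \Cref{thm:zhao_strong_indep} directly and bound each edge factor separately. The product in \eqref{eq:zhao_strong_indep} runs over the edges, so the idea is to redistribute each edge's contribution to its two endpoints and show that every vertex receives at most $19^{1/6}$. Taking logarithms, it suffices to prove that for every edge $uv$,
\begin{align}
    \frac{\log\paren{2^{d_u}+2^{d_v}-1}}{d_ud_v}\ \leq\ \paren{\frac{1}{d_u}+\frac{1}{d_v}}\cdot \frac{\log 19}{12}. \label{eq:edge_split_plan}
\end{align}
Granting \eqref{eq:edge_split_plan}, we sum over all edges. Each vertex $v$ lies on exactly $d_v$ edges, so $\sum_{uv\in E}(1/d_u+1/d_v)=\sum_{v\in V} d_v\cdot (1/d_v)=|V|$. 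Hence $\log i(\Gamma)\leq |V|\log 19/12$. This gives the bound $19^{|V|/12}$, which is stronger than the claimed $19^{|V|/6}$, so either constant suffices.

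Multiplying \eqref{eq:edge_split_plan} by $d_ud_v$ rewrites it as $\log(2^{a}+2^{b}-1)\leq (a+b)\log 19/12$ with $a=d_u$, $b=d_v\in\{2,3,4\}$, that is,
\begin{align}
    2^a+2^b-1\ \leq\ 19^{(a+b)/12}.
\end{align}
If this stronger inequality failed for some pair, we would instead fall back to the exponent $(a+b)/6$, i.e.\ the weaker target $2^a+2^b-1\le 19^{(a+b)/6}$, which still proves the statement.

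The main step is checking the finitely many symmetric pairs $(a,b)$. This is where the constant $19$ comes from: the binding case is $a=b=2$, where $2^2+2^2-1=7$. We check $7\leq 19^{1/3}$? Since $19^{1/3}\approx 2.67$, the inequality fails, so the $/12$ version is false and we use the $/6$ version. For $(2,2)$ it becomes $7\le 19^{2/3}\approx 7.12$, which holds. For $(2,3)$ it is $11\le 19^{5/6}\approx 11.6$; for $(2,4)$, $19\le 19^{1}$, which holds with equality; for $(3,3)$, $15\le 19$; for $(3,4)$, $23\le 19^{7/6}\approx 31$; and for $(4,4)$, $31\le 19^{4/3}\approx 50.5$. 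All pairs satisfy the bound, with $(2,4)$ tight, which explains the constant $19$.

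With the corrected exponent, summing over edges gives $\log i(\Gamma)\le \sum_{uv}(1/d_u+1/d_v)\log 19/6 = |V|\log 19/6$, which is exactly \eqref{eq:cor_zhao}. The only delicate points are the bookkeeping identity $\sum_{uv\in E}(1/d_u+1/d_v)=|V|$, which holds because the graph is simple and the sum counts each edge once, and the tight numerical cases $(2,2)$ and $(2,4)$.
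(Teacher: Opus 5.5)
Your final argument is correct and is essentially the paper's proof. Both take logarithms in \Cref{thm:zhao_strong_indep}, use $\sum_{uv\in E}(1/d_u+1/d_v)=|V|$, and reduce to checking $2^{a}+2^{b}-1\le 19^{(a+b)/6}$ for $a,b\in\{2,3,4\}$, with equality at $\{a,b\}=\{2,4\}$. In a write-up, delete the abandoned $\log 19/12$ detour (the ``stronger'' bound $19^{|V|/12}$ is false, as your own $(2,2)$ check shows) and the claim that $(2,2)$ is the binding case.
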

\begin{proof}
    To find an upper bound for the expression in \eqref{eq:zhao_strong_indep}, note that
    \begin{align}
        \log \paren{\prod_{uv \in E} \paren{2^{d_u}+2^{d_v}-1}^{\frac{1}{d_ud_v}} }\ =\ \sum_{uv \in E}\frac{\log(2^{d_u}+2^{d_v}-1)}{d_ud_v}.
    \end{align}
    We wish to use the graph-theoretic identity
    \begin{align}
        \sum_{uv \in E}\paren{\frac{1}{d_u}+\frac{1}{d_v}}\ =\ |V|,
    \end{align}
    so we want the minimal constant $c$ with
    \begin{align}
        \frac{\log(2^{d_u}+2^{d_v}-1)}{d_ud_v} \ \leq \ c\paren{\frac{1}{d_u}+\frac{1}{d_v}},
    \end{align}
    which is equivalent to
    \begin{align}
        \frac{\log(2^{d_u}+2^{d_v}-1)}{d_u+d_v}\ \leq\ c. \label{eq:logfraclessc}
    \end{align}
    Since $d_u,d_v \in \{2,3,4\}$, one can compute that the maximum of the left-hand side of \eqref{eq:logfraclessc} is attained at $\{d_u,d_v\}=\{2,4\}$, so we set $c=\log(19)/6$. Then
    \begin{align}
        \sum_{uv \in E}\frac{\log(2^{d_u}+2^{d_v}-1)}{d_ud_v}\ \leq\ \frac{\log(19)}{6}\sum_{uv \in E}\paren{\frac{1}{d_u}+\frac{1}{d_v}}\ =\ \frac{\log (19)}{6}|V|,
    \end{align}
    hence
    \begin{align}
        i(\Gamma)\ \leq\ \exp\paren{ \sum_{uv \in E}\frac{\log (2^{d_u}+2^{d_v}-1)}{d_ud_v}}\ \leq\  \exp\paren{\frac{\log (19)}{6}|V|}\ =\ 19^{|V|/6},
    \end{align}
    as desired.
\end{proof}

For the last lemma in this section, we need the number of independent sets of a path. The path $\Pi_n$ with $n \geq 1$ vertices has $F_{n+2}$ independent sets, where $F_1=F_2=1$ and $F_{n+2}=F_{n+1}+F_n$ are the Fibonacci numbers (see \cite{ZhaoGroups}, for example).

\begin{lem} \label{lem:path_bound}
    For every $n \geq 1$,
    \begin{align}
        i(\Pi_n)\ =\ F_{n+2}\ \leq\ 2\cdot 7^{(n-1)/4}. \label{eq:path_bound}
    \end{align}
\end{lem}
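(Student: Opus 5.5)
The plan is to prove the inequality $F_{n+2} \leq 2\cdot 7^{(n-1)/4}$ by strong induction on $n$, after checking small cases directly. The identity $i(\Pi_n)=F_{n+2}$ is taken as known (as cited from \cite{ZhaoGroups}), so only the numerical bound needs an argument. Set $\alpha := 7^{1/4}\approx 1.6266$. Since $\alpha > \phi \approx 1.618$, the Fibonacci numbers grow more slowly than $\alpha^n$, so an inequality of this shape should propagate once it holds on a few initial values.

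First I check the base cases. For $n=1$ we have $F_3=2\leq 2$. For $n=2$ we have $F_4=3\leq 2\alpha\approx 3.25$. For $n=3$, $F_5=5\leq 2\alpha^2=2\sqrt7\approx 5.29$. For $n=4$, $F_6=8\leq 2\alpha^3\approx 8.61$.

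For the inductive step, assume the bound holds for $n-1$ and $n-2$ with $n\geq 3$. Then
\begin{align}
F_{n+2}\ =\ F_{n+1}+F_n\ \leq\ 2\alpha^{n-2}+2\alpha^{n-3}\ =\ 2\alpha^{n-3}(\alpha+1).
\end{align}
It therefore suffices that $\alpha+1\leq \alpha^2$, i.e.\ $1+7^{1/4}\leq \sqrt 7$. Numerically $2.6266\leq 2.6458$, so this holds. This inequality is also exactly the content of $\alpha\geq \phi$, the positive root of $x^2=x+1$.

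The only delicate point is confirming $1+7^{1/4}\leq\sqrt7$ rigorously rather than by decimals. I would do this by setting $t=7^{1/4}$ and checking $t^2-t-1\geq 0$, which is equivalent to $t\geq\phi$, i.e.\ $7\geq\phi^4=(7+3\sqrt5)/2$. That reduces to $7\geq 3\sqrt5$, i.e.\ $49\geq 45$, which is true. Everything else is routine.

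Alternatively, one could use Binet's formula: $F_{n+2}\le \phi^{n+2}/\sqrt5+1/2$, compared against $2\alpha^{n-1}$. The induction above avoids that bookkeeping, so I would present the induction.
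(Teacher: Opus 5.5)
Your proposal is correct and matches the paper's proof: both use the same two-step induction from the base cases $n=1,2$, and your condition $1+7^{1/4}\le\sqrt7$ is exactly the paper's $7^{-1/4}+7^{-1/2}<1$ multiplied through by $\sqrt7$. Your reduction to $49\ge 45$ rigorously justifies a step the paper only asserts, and the extra base cases $n=3,4$ are harmless but unnecessary.
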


\begin{proof}
    For $n=1$ and $n=2$ we have $F_3 = 2 = 2\cdot 7^0$ and $F_4 = 3 < 2 \cdot 7^{1/4}$. For $n \geq 3$, by induction,
    \begin{align}
        F_{n+2}\ &=\ F_{n+1}+F_n\ \leq\ 2\cdot 7^{(n-2)/4} + 2\cdot 7^{(n-3)/4} \notag \\ &=\ 2\cdot 7^{(n-1)/4}\paren{7^{-1/4}+7^{-1/2}} \notag \\ &\leq\ 2\cdot 7^{(n-1)/4},
    \end{align}
    since $7^{-1/4}+7^{-1/2} < 1$.
\end{proof}

\begin{lem} \label{lem:double_bound}
    Let $G$ be a finite group of order $N$, and let $g\neq h \in G$. Then
    \begin{align}
    i(\mc F_{QQ}(g,h))\ &\leq\ 19^{N/6}, \label{eq:FQQ_bound}
    \\ i(\mc F_{PP}(g,h))\ &\leq\ \paren{3^{1/4}\cdot 7^{1/8}}^N\ <\ 1.68^N. \label{eq:FPP_bound}
    \end{align}
    Moreover, if $g,h \in Z(G)$, then
    \begin{align}
        i(\mc F_{PP}(g,h))\ \leq\ 7^{N/4}. \label{eq:FPP_central}
    \end{align}
\end{lem}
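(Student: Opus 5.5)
We treat the three bounds separately. Throughout we pass from the directed forbiddance graphs to their underlying undirected graphs: parallel edges are merged and loops are kept, which does not change the set of independent sets. The two ingredients are \Cref{cor:Zhao_strong_indep} for $\mc F_{QQ}$, and a monotonicity argument plus a component-by-component analysis for $\mc F_{PP}$.

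\textbf{The bound \eqref{eq:FQQ_bound}.} In $\mc F_{QQ}(g,h)$ the undirected neighbours of $x$ are $g^{-1}x,\ gx,\ h^{-1}x,\ hx$.
\begin{itemize}
\item If $g=e$ or $h=e$, every vertex carries a loop. Then only the empty set is independent, so $i=1\le 19^{N/6}$.
\item Otherwise $g,h\neq e$, so there are no loops. Since $g\neq h$, we have $gx\neq hx$, so each vertex has at least $2$ and at most $4$ distinct neighbours.
\end{itemize}
In the second case the underlying simple graph has no isolated vertices and all degrees lie in $\{2,3,4\}$. Hence \Cref{cor:Zhao_strong_indep} gives $i(\mc F_{QQ}(g,h))\le 19^{N/6}$ directly.

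\textbf{The bound \eqref{eq:FPP_bound}.} The edge set of $\mc F_{PP}(g,h)$ contains those of both $\mc F_P(g)$ and $\mc F_P(h)$. Every independent set of $\mc F_{PP}(g,h)$ is therefore independent in each of them, giving
\begin{align}
i(\mc F_{PP}(g,h))\ \le\ \min\{i(\mc F_P(g)),\, i(\mc F_P(h))\}.
\end{align}
If $g$ or $h$ is non-central, \eqref{eq:noncentral_P} yields $(3^{1/4}\cdot 7^{1/8})^N$. If both are central, the bound follows from \eqref{eq:FPP_central}, since $7^{1/4}<3^{1/4}7^{1/8}$ (equivalently $7^{1/8}<3^{1/4}$, i.e. $7<9$).

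\textbf{The bound \eqref{eq:FPP_central}, for $g\neq h$ both central.} This is the main step. Because $g$ is central, $xy=g$ iff $yx=g$, so the undirected neighbours of $x$ are exactly $x^{-1}g$ and $x^{-1}h$. These are distinct since $g\neq h$.
\begin{itemize}
\item Every vertex has at most two neighbours other than itself, so each component is a cycle or a path.
\item A loop at $x$ occurs iff $x^2=g$ or $x^2=h$, and in that case $x$ has only one other neighbour. No vertex has two loops, since $x^2$ cannot equal both $g$ and $h$.
\end{itemize}
Consequently:
\begin{enumerate}
\item Loopless components are cycles. They have length at least $3$: length $1$ would be a loop, and length $2$ would require $x^{-1}g=x^{-1}h$. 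For these components, \Cref{lem:lucas_asym} gives at most $7^{n/4}$ independent sets.
\item A component containing a loop is a path whose endpoints are exactly the looped vertices. Such a path has $n\ge 2$ vertices.
\end{enumerate}
In case 2, looped vertices lie in no independent set, so the count is $i(\Pi_{n-2})=F_n$, where $\Pi_0$ contributes $1$. By \Cref{lem:path_bound}, $F_n\le 2\cdot 7^{(n-3)/4}\le 7^{n/4}$; the case $n=2$ is trivial.

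Multiplying over components via \eqref{eq:indeps_in_disj_graphs} gives $i(\mc F_{PP}(g,h))\le 7^{N/4}$. The delicate point is verifying that the degree and loop structure is exactly as claimed. This is where centrality is used: without it, $x^{-1}g\neq gx^{-1}$ in general, and the degree could reach $4$.
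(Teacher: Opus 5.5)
Your proof is correct and follows the paper's argument: \Cref{cor:Zhao_strong_indep} for $\mc F_{QQ}(g,h)$, monotonicity under adding edges together with \eqref{eq:noncentral_P} when $g$ or $h$ is non-central, and a cycle/path decomposition when both are central. The only difference is bookkeeping in the central case. You keep the looped vertices inside their components, so each looped component is a path with loops exactly at its two endpoints and contributes $F_n \le 7^{n/4}$. The paper instead deletes the looped set $S$ first and charges each path component of the remaining graph to two elements of $S$ via $2b \le s$; your per-component version is slightly cleaner and gives the same bound.
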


\begin{proof}
We first remark on three facts about independent sets. First, a self-adjacent vertex lies in no independent set, so deleting all self-adjacent vertices (and the edges attached to them) does not change the number of independent sets. Second, adding edges to a graph on a fixed vertex set can only decrease the number of independent sets. In particular, since $\mc F_P(g)$ and $\mc F_P(h)$ are subgraphs of $\mc F_{PP}(g,h)$ on the same vertex set,
\begin{align}
    i(\mc F_{PP}(g,h))\ \leq\ \min\brax{i(\mc F_P(g)),\, i(\mc F_P(h))}. \label{eq:PP_le_P}
\end{align}
Third, for any directed graph $\mc F$, we define the underlying undirected graph $\Gamma$ to have the same vertex set as $\mc F$, where each directed edge of $\mc F$ becomes an undirected edge, and two or more directed edges between the same pair of vertices are identified as a single undirected edge; see \Cref{fig:directed_undirected}.

\begin{figure}[ht]
% https://q.uiver.app/#q=WzAsMTAsWzAsMCwiXFxidWxsZXQiXSxbMCwyLCJcXGJ1bGxldCJdLFsyLDIsIlxcYnVsbGV0Il0sWzIsMCwiXFxidWxsZXQiXSxbNCwwLCJcXGJ1bGxldCJdLFs2LDAsIlxcYnVsbGV0Il0sWzYsMiwiXFxidWxsZXQiXSxbNCwyLCJcXGJ1bGxldCJdLFsxLDEsIlxcbWF0aGNhbHtGfSJdLFs1LDEsIlxcR2FtbWEiXSxbMCwzLCIiLDAseyJvZmZzZXQiOjF9XSxbMCwxLCIiLDIseyJvZmZzZXQiOi0xfV0sWzEsMCwiIiwyLHsib2Zmc2V0IjotMX1dLFsxLDJdLFsyLDIsIiIsMix7ImFuZ2xlIjoxMzV9XSxbMCwzLCIiLDIseyJvZmZzZXQiOi0xfV0sWzcsNCwiIiwyLHsic3R5bGUiOnsiaGVhZCI6eyJuYW1lIjoibm9uZSJ9fX1dLFs0LDUsIiIsMix7InN0eWxlIjp7ImhlYWQiOnsibmFtZSI6Im5vbmUifX19XSxbNyw2LCIiLDAseyJzdHlsZSI6eyJoZWFkIjp7Im5hbWUiOiJub25lIn19fV0sWzYsNiwiIiwwLHsiYW5nbGUiOjEzNSwic3R5bGUiOnsiaGVhZCI6eyJuYW1lIjoibm9uZSJ9fX1dXQ==
\[\begin{tikzcd}[ampersand replacement=\&]
	\bullet \&\& \bullet \&\& \bullet \&\& \bullet \\
	\& {\mathcal{F}} \&\&\&\& \Gamma \\
	\bullet \&\& \bullet \&\& \bullet \&\& \bullet
	\arrow[shift right, from=1-1, to=1-3]
	\arrow[shift left, from=1-1, to=1-3]
	\arrow[shift left, from=1-1, to=3-1]
	\arrow[no head, from=1-5, to=1-7]
	\arrow[shift left, from=3-1, to=1-1]
	\arrow[from=3-1, to=3-3]
	\arrow[from=3-3, to=3-3, loop, in=280, out=350, distance=10mm]
	\arrow[no head, from=3-5, to=1-5]
	\arrow[no head, from=3-5, to=3-7]
	\arrow[no head, from=3-7, to=3-7, loop, in=280, out=350, distance=10mm]
\end{tikzcd}\]
\vspace{-20pt}
\caption{An example directed graph $\mc F$, and its underlying undirected graph $\Gamma$.}
\label{fig:directed_undirected}
\end{figure}

Then it is clear that $i(\mc F)=i(\Gamma)$. Moreover, if $\mc F$ has no self-adjacent vertices, then $\Gamma$ is a simple, undirected graph.

We now prove \eqref{eq:FQQ_bound}. If $g=e$ or $h=e$, then every $x \in G$ is self-adjacent in $\mc F_{QQ}(g,h)$, since $xx^{-1}=e$, so $i(\mc F_{QQ}(g,h))=1$ and the claim follows trivially. Otherwise if $g,h \neq e$, then no vertex is self-adjacent. Let $\Ga_{QQ}$ denote the simple, undirected graph corresponding to $\mc F_{QQ}(g,h)$. The neighbors of $x$ in $\Ga_{QQ}$ are the elements $\{g^{-1}x,\ h^{-1}x,\ gx,\ hx\}$. These are all different from $x$, and $g^{-1}x \neq h^{-1}x$, so every vertex has degree $2$, $3$, or $4$. Thus \eqref{eq:FQQ_bound} follows from \Cref{cor:Zhao_strong_indep}.

To prove \eqref{eq:FPP_bound}, we first assume $g \not \in Z(G)$ (the case $h \not \in Z(G)$ is identical). Then $i(\mc F_P(g)) \leq \paren{3^{1/4}\cdot 7^{1/8}}^N$ by \eqref{eq:noncentral_P}, and so \eqref{eq:FPP_bound} follows from \eqref{eq:PP_le_P}.

Finally, suppose $g,h \in Z(G)$. We then get $xy=g$ if and only if $yx=g$, and the unique $y$ with $xy=g$ is $p_g(x) := x^{-1}g$. The map $p_g$ is an involution, since $p_g(p_g(x)) = g^{-1}xg = x$. Hence, in the underlying undirected graph $\Ga_{PP}$ of $\mc F_{PP}(g,h)$, the neighbors of $x$ are exactly $p_g(x)$ and $p_h(x)$, and these are distinct because $g \neq h$. The vertex $x$ is self-adjacent if and only if 
$x \in S := \{x \in G : x^2 \in \{g,h\}\}$. Let $s := |S|$, and let $\Ga'$ be the simple graph obtained from $\Ga_{PP}$ by deleting $S$, so that $\Ga'$ is simple, undirected, and $i(\mc F_{PP}(g,h)) = i(\Ga')$.

Every vertex of $\Gamma_{PP}$ outside $S$ has exactly two neighbors, both different from itself, while every vertex in $S$ has exactly one neighbor different from itself. In particular, $\Ga'$ has maximum degree at most $2$, so it is a disjoint union of cycles of lengths $c_1,\dots,c_a \geq 3$ and paths with $n_1,\dots,n_b \geq 1$ vertices, where
\begin{equation}
    \sum_i c_i + \sum_j n_j = N-s.
\end{equation}
Consider a path component $\Pi$ with $n$ vertices. If $n>1$, the endpoints of $\Pi$ have degree $1$ as vertices in $\Ga'$, but degree $2$ as vertices in $\Ga_{PP}$, so the neighbor each endpoint loses lies in $S$. If $n=1$, then $\Pi$ is a single vertex $x \not \in S$, which has two distinct neighbors in $\Ga_{PP}$, and both lie in $S$. In either case, each path component of $\Ga'$ accounts for two elements of $S$, and these elements are distinct for distinct path components, since every element of $S$ has exactly one neighbor other than itself. Therefore $2b \leq s$. Therefore, by \Cref{lem:lucas_asym}, \Cref{rmk:lucas_forbiddance}, and \Cref{lem:path_bound},
\begin{align}
    i(\mc F_{PP}(g,h))\ =\ \prod_{i=1}^a L_{c_i}\cdot \prod_{j=1}^b F_{n_j+2} \ &\leq\ 7^{\sum_i c_i/4}\cdot \prod_{j=1}^b \paren{2\cdot 7^{(n_j-1)/4}} \notag
    \\ &=\ 7^{(N-s)/4}\cdot \paren{2\cdot 7^{-1/4}}^b \notag
    \\ &\leq\ 7^{(N-s)/4}\cdot \paren{2\cdot 7^{-1/4}}^{s/2} \notag
    \\ &=\ 7^{N/4}\cdot\paren{\sqrt 2\cdot 7^{-3/8}}^s\ \leq\ 7^{N/4}, \label{eq:central_PP_chain}
\end{align}
where we used $2 \cdot 7^{-1/4} > 1$ and $\sqrt 2 < 7^{3/8}$. This proves \eqref{eq:FPP_central}. Since $7^{1/4} < 3^{1/4}\cdot 7^{1/8}$, this also completes the proof of \eqref{eq:FPP_bound}.
\end{proof}

%%%%%%%%%%%%%%%%%%%%%%%%%%%%%%%%%%%%%%%%%%%%%%%%%%%%%%%%%%%%%%%%%%%%%%
\section{\texorpdfstring{Proof of \Cref{thm:mptqresult}}{Proof of Dominance Theorem}} \label{sec:dom_thm}

We now combine the bounds on $M_1$ from \Cref{sec:M1} with the bounds on $M_2$ from \Cref{sec:M2} and apply \Cref{thm:dom_ineq} to prove \Cref{thm:mptqresult}.
\mptqthm*

\begin{proof}
Let $N_k := |G_k|$. By \Cref{thm:dom_ineq}, to prove that $G_k$ is product- or quotient-dominant for sufficiently large $k$, it suffices to show that
\begin{align}
    \frac{1}{2}\ <\ \frac{(M_1(G_k))^2}{(2^{N_k}-B(G_k))\cdot (M_2(G_k))} \label{eq:dom_goal}
\end{align}
for all sufficiently large $k$, and to determine the sign of $M_1(G_k)$.

\begin{lem} \label{lem:PQPPQQ}
    Let
    \begin{align}
    P_k\ :=\ \sum_{g \in G_k}i(\mc F_{P}(g)), \quad &\quad \quad Q_k \ :=\ \sum_{g \in G_k}i(\mc F_Q(g)),
    \\ PP_k\ :=\ \sum_{g\neq h \in G_k}i(\mc F_{PP}(g,h)), \quad &\quad \quad QQ_k\ :=\ \sum_{g\neq h \in G_k}i(\mc F_{QQ}(g,h)).
\end{align}
Then
\begin{align}
    M_1(G_k)\ &=\ Q_k-P_k, \label{eq:M1_in_terms}
    \\ M_2(G_k)\ &\leq\ Q_k+P_k+QQ_k+PP_k, \label{eq:M2_in_terms}
    \\ 2^{N_k}-B(G_k)\ &\leq\ Q_k+P_k. \label{eq:B_in_terms}
\end{align}
\end{lem}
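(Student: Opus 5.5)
The first two claims follow directly from results already established. For \eqref{eq:M1_in_terms}, I apply \Cref{lem:M1_sum} to $G_k$: the sum $\sum_{g}\paren{i(\mc F_Q(g)) - i(\mc F_P(g))}$ splits into $Q_k - P_k$ by linearity.

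For \eqref{eq:M2_in_terms}, I start from the rewriting of $M_2(G_k)$ as the diagonal terms \eqref{eq:M2_singles} plus the off-diagonal terms \eqref{eq:M2_doubles}. The only negative contributions are the $-2i(\mc F_{PQ}(\cdot,\cdot))$ terms, and independent-set counts are nonnegative, so dropping them gives \eqref{eq:M2_less_singles} and \eqref{eq:M2_less_doubles}. The four sums there are exactly $P_k$, $Q_k$, $PP_k$ and $QQ_k$.

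The substantive part is \eqref{eq:B_in_terms}. I will bound the number of unbalanced sets by the number of sets $A$ for which $AA \neq G_k$ or $AA^{-1}\neq G_k$.
\begin{itemize}
\item If $AA = AA^{-1} = G_k$, then $d(A) = |G_k| - |G_k| = 0$, so $A$ is balanced.
\item Hence every unbalanced $A$ either misses some $g$ from $AA$ or misses some $g$ from $AA^{-1}$.
\end{itemize}
A union bound over $g \in G_k$ and over the two cases then gives
\begin{align}
2^{N_k} - B(G_k)\ \leq\ \sum_{g\in G_k}\#\{A: g\notin AA\} + \sum_{g \in G_k}\#\{A : g \notin AA^{-1}\}.
\end{align}
By \eqref{eq:prod_indep_id} and \eqref{eq:quot_indep_id}, the right-hand side equals $P_k + Q_k$.

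I see no real obstacle here. The only step requiring an idea is the observation that full product and quotient sets force balance, which turns the count of unbalanced sets into a union bound over missing elements.
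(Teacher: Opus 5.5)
Your proposal is correct and follows the paper's proof. The first two claims are read off from \Cref{lem:M1_sum} and from discarding the nonnegative $i(\mc F_{PQ})$ terms, and the third comes from noting that $AA = AA^{-1} = G_k$ forces $d(A)=0$, followed by a union bound. The only cosmetic difference is in that union bound: the paper takes the union over $g$ of the events $\{g\notin AA \text{ or } g \notin AA^{-1}\}$, evaluates each by inclusion--exclusion as $i(\mc F_P(g)) + i(\mc F_Q(g)) - i(\mc F_{PQ}(g,g))$, and then drops the last term, whereas you apply the union bound directly to all $2N_k$ individual events, which is slightly more direct.
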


\begin{proof}
    We immediately obtain \eqref{eq:M1_in_terms} from \Cref{lem:M1_sum}, and \eqref{eq:M2_in_terms} was shown in \eqref{eq:M2_less_singles} and \eqref{eq:M2_less_doubles}. Finally, \eqref{eq:B_in_terms} holds for any finite group $G$, as
    \begin{align}
    2^{|G|}-B(G)\ &=\ \ 2^{|G|} - \# \{ A \subseteq G:d(A)=0\} \notag
    \\ & \leq\ \ 2^{|G|} -\# \{ A: AA=AA^{-1}=G\} \notag
    \\ &=\ \ \# \{ A: \exists g \not \in AA \text{ or } \exists h \not \in AA^{-1}\} \notag
    \\ &\leq\ \ \sum_{g \in G}\# \{ A: g \not \in AA \text{ or } g \not \in AA^{-1}\} \notag
    \\ &=\ \ \sum _{g \in G}\paren{i(\mc F_{P}(g)) + i(\mc F_{Q}(g)) - i (\mc F_{PQ}(g,g))} \notag
    \\ &\leq\ \ \sum_{g \in G}\paren{i(\mc F_P(g)) + i (\mc F_{Q}(g))}.
\end{align}
\end{proof}

We reformulate the desired inequality in \eqref{eq:dom_goal} as a statement about $P_k,Q_k,PP_k$, and $QQ_k$.

\begin{lem} \label{lem:uv_criterion}
    Let $G$ be a finite group, and let $P,Q,PP,QQ$ be as in \Cref{lem:PQPPQQ}, formed with $G$ in place of $G_k$. Suppose $P \neq Q$, and set
    \begin{align}
        X\ :=\ \max\{P,Q\},\quad u\ :=\ \frac{\min\{P,Q\}}{X}, \quad v\ :=\ \frac{PP+QQ}{X}.
    \end{align}
    If
    \begin{align}
        f(u,v)\ :=\ u^2-6u+1-v(1+u)\ >\ 0, \label{eq:f_criterion}
    \end{align}
    then $G$ is product-dominant when $Q>P$, and quotient-dominant when $P>Q$. Moreover, the following hold.
    \begin{enumerate}[(i)]
        \item The condition \eqref{eq:f_criterion} is equivalent to
        \begin{align}
            2(Q-P)^2\ >\ (Q+P)\paren{Q+P+PP+QQ}. \label{eq:f_product_form}
        \end{align}
        \item The function $f$ is strictly decreasing in $u$ on $[0,3)$ and in $v$ on $[0,\infty)$. Hence, if $u \leq u^+ < 3$ and $v \leq v^+$, then $f(u,v) \geq f(u^+,v^+)$, so it suffices to verify \eqref{eq:f_criterion} for any such upper bounds $u^+$ and $v^+$.
        \item The condition \eqref{eq:f_criterion} holds whenever $u \leq 1$ and $6u+2v<1$.
    \end{enumerate}
\end{lem}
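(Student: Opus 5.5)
The plan is to reduce everything to \Cref{thm:dom_ineq} via the bounds of \Cref{lem:PQPPQQ}. By \eqref{eq:M1_in_terms}, $M_1 = Q-P$, so $M_1>0$ exactly when $Q>P$ and $M_1<0$ exactly when $P>Q$; this fixes which conclusion of \Cref{thm:dom_ineq} applies. The bounds \eqref{eq:M2_in_terms} and \eqref{eq:B_in_terms} give
\begin{align}
(2^{|G|}-B)M_2\ \leq\ (Q+P)(Q+P+PP+QQ).
\end{align}
Since $P,Q>0$ and the right side is positive, \eqref{eq:f_product_form} implies $2M_1^2 > (2^{|G|}-B)M_2$, which is \eqref{eq:dom_ineq}. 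This gives the main claim once (i) is established. (The case $M_2=0$ cannot occur here, since $M_1\neq 0$ forces $M_2>0$.)

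For (i), set $X=\max\{P,Q\}$, so that $\{P,Q\}=\{X,uX\}$ and $PP+QQ=vX$. Then $(Q-P)^2 = X^2(1-u)^2$ and $(Q+P)(Q+P+PP+QQ) = X^2(1+u)(1+u+v)$. Dividing by $X^2>0$, \eqref{eq:f_product_form} becomes
\begin{align}
2(1-u)^2\ >\ (1+u)^2 + v(1+u).
\end{align}
Expanding gives $u^2-6u+1 - v(1+u)>0$, which is exactly $f(u,v)>0$. The symmetry in $P,Q$ means no case split is needed.

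For (ii), compute $\partial f/\partial v = -(1+u) <0$ for $u\ge 0$, and $\partial f/\partial u = 2u-6-v$, which is negative when $u<3$ and $v\ge 0$. The monotonicity statement then follows by moving first in $u$ and then in $v$. For (iii), with $0\le u\le 1$ we have $u^2\ge 0$ and $v(1+u)\le 2v$, so
\begin{align}
f(u,v)\ \ge\ 1-6u-2v\ >\ 0.
\end{align}

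None of this is hard. The only point needing care is the sign and positivity bookkeeping in the main claim: one must use the upper bounds in the right direction and confirm that $P,Q$ are positive (each is at least $1$, since the empty set is independent).
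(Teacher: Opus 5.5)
Your proof is correct and follows essentially the same route as the paper: bound $2^{|G|}-B$ and $M_2$ above by $Q+P$ and $Q+P+PP+QQ$ via \Cref{lem:PQPPQQ}, rescale by $X$ to get $2(1-u)^2>(1+u)(1+u+v)\iff f(u,v)>0$, and read off the sign of $M_1=Q-P$ to apply \Cref{thm:dom_ineq}; (ii) and (iii) use the same derivative computation and the same estimates $u^2\ge 0$, $1+u\le 2$. The only differences are that you derive the main claim from the product form (i) rather than from the ratio bound in $u,v$, and that you check positivity of the denominator explicitly: $M_1\neq 0$ forces $M_2>0$, and likewise $2^{|G|}-B>0$, a point the paper leaves implicit.
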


\begin{proof}
    By \eqref{eq:M1_in_terms}, we have 
    \begin{align}
        |M_1(G)| &= |Q-P| = X(1-u),
        \\ 2^{|G|}-B(G)\ &\leq\ Q+P\ =\ X(1+u),
        \\ M_2(G)\ &\leq\ Q+P+QQ+PP\ =\ X(1+u+v).
    \end{align}
Thus,
    \begin{align}
        \frac{(M_1(G))^2}{(2^{|G|}-B(G))\cdot M_2(G)}\ \geq\ \frac{X^2(1-u)^2}{X(1+u)\cdot X(1+u+v)}\ =\ \frac{(1-u)^2}{(1+u)(1+u+v)}. \label{eq:uv_ratio}
    \end{align}
    Since $(1+u)(1+u+v)>0$, we have
    \begin{align}
        \frac{(1-u)^2}{(1+u)(1+u+v)}\ >\ \frac 12\ &\iff\ 2(1-u)^2\ >\ (1+u)(1+u+v) \notag
        \\ &\iff\ u^2-6u+1-v(1+u)\ >\ 0,
    \end{align}
    so \eqref{eq:f_criterion} gives \eqref{eq:dom_ineq}. The conclusion now follows from \Cref{thm:dom_ineq}, as $M_1(G)=Q-P$ is positive exactly when $Q>P$.

    For (i), multiply \eqref{eq:f_criterion} through by $X^2>0$ and use $X(1-u) = |Q-P|$, $X(1+u) = Q+P$, and $Xv = PP+QQ$, which turns $2(1-u)^2 > (1+u)(1+u+v)$ into \eqref{eq:f_product_form}. For (ii), let $u \in [0,3)$ and $v \in [0,\infty)$, so
    \begin{align}
        \frac{\partial f}{\partial u}\ =\ 2u-6-v\ <\ 0, \qquad  \frac{\partial f}{\partial v}\ =\ -(1+u)\ <\ 0.
    \end{align}
    For (iii), note that $u^2 \geq 0$ and $1+u \leq 2$, so $f(u,v) \geq 1-6u-2v$.
\end{proof}

We next collect the bounds from \Cref{sec:M1,sec:M2} in a common form. Let $G$ be a finite group of order $N$, let $T$, $Z$, and $R$ be as in \Cref{thm:mptqresult} (formed with $G$ in place of $G_k$), and let $P,Q,PP,QQ$ be as in \Cref{lem:PQPPQQ}. Since $7^{1/4}<1.68$, \Cref{cor:FQ_asym} and \Cref{lem:FP_asym} give
\begin{align}
    T\sqrt 3^{N}\ &\leq\ Q\ \leq\ T\sqrt 3^N + N\cdot 1.68^N, \label{eq:Q_explicit}
    \\ Z \sqrt 3^{N-R/Z}\ &\leq\ P\ \leq\ Z\sqrt 3^N + N\cdot 1.68^N. \label{eq:P_explicit}
\end{align}
Since the sums defining $PP$ and $QQ$ run over the $N(N-1)$ ordered pairs $(g,h)$ with $g \neq h$, \Cref{lem:double_bound} and $19^{1/6}<1.68$ give
\begin{align}
    PP+QQ\ \leq\ 2N(N-1)\cdot 1.68^N\ \leq\ 2N^2\cdot 1.68^N. \label{eq:double_explicit}
\end{align}
Finally, as $1.68<\sqrt 3$, we remark that the quantity
\begin{align}
    \veps(N)\ :=\ N^3\paren{\frac{1.68}{\sqrt 3}}^N \label{eq:eps_def}
\end{align}
satisfies $\veps(N)\to 0$ as $N \to \infty$. The error terms obtained below are controlled by this $\veps(N)$.

To prove statement (i) of the theorem, suppose $Z_k = o(T_k)$. As $Z_k \geq 1$ for every $k$, this forces $T_k \to \infty$; in particular $T_k \geq 1$ for all large $k$, so $Q_k \geq T_k\sqrt 3^{N_k}>0$ by \eqref{eq:Q_explicit}. Dividing the upper bound in \eqref{eq:P_explicit} by $T_k \sqrt 3^{N_k}$ gives
\begin{align}
    \frac{P_k}{Q_k}\ \leq\ \frac{Z_k\sqrt 3^{N_k}+N_k\cdot 1.68^{N_k}}{T_k\sqrt 3^{N_k}}\ \leq\ \frac{Z_k}{T_k}+N_k\paren{\frac{1.68}{\sqrt 3}}^{N_k}\ \leq\ \frac{Z_k}{T_k}+\veps(N_k), \label{eq:case1_u}
\end{align}
and \eqref{eq:double_explicit} gives
\begin{align}
    \frac{PP_k+QQ_k}{Q_k}\ \leq\ \frac{2N_k^2\cdot 1.68^{N_k}}{T_k\sqrt 3^{N_k}}\ \leq\ 2\veps(N_k). \label{eq:case1_v}
\end{align}
The right-hand sides of \eqref{eq:case1_u} and \eqref{eq:case1_v} tend to $0$, since $Z_k/T_k \to 0$ by assumption and $\veps(N_k)\to 0$. Hence, for all sufficiently large $k$, we have $P_k<Q_k$ and
\begin{align}
    6\cdot \frac{P_k}{Q_k} + 2\cdot \frac{PP_k+QQ_k}{Q_k}\ <\ 1 .
\end{align}
Applying \Cref{lem:uv_criterion} with $X=Q_k$, $u = P_k/Q_k$, and $v = (PP_k+QQ_k)/Q_k$, we conclude that $G_k$ is product-dominant for all sufficiently large $k$.

Now, for case (ii) of the theorem, suppose $T_k\cdot \sqrt 3^{R_k/Z_k} = o(Z_k)$, and set
\begin{align}
    W_k\ :=\ Z_k \sqrt 3^{N_k-R_k/Z_k}\ \leq\ P_k, \label{eq:Wk_def}
\end{align}
the lower bound from \eqref{eq:P_explicit}. We first check that
\begin{align}
    \sqrt 3^{R_k/Z_k}\ \leq\ N_k \label{eq:sqrt3_le_N}
\end{align}
for all sufficiently large $k$. If $T_k \geq 1$, then 
\begin{align}
    \sqrt 3^{R_k/Z_k}\  \leq\  T_k \sqrt 3^{R_k/Z_k} \ <\  Z_k \ \leq\ N_k
\end{align}
for large $k$, by the assumption $T_k\sqrt 3^{R_k/Z_k}=o(Z_k)$. If instead $T_k=0$, then $G_k$ has no element of order $2$, and therefore has odd order by Cauchy's theorem. In that case, if $x^2 \in Z(G_k)$, then $\bar x^2 = \bar e$ in the quotient group $G_k / Z(G_k)$, which also has odd order, so $\bar x = \bar e$ and $x \in Z(G_k)$. Thus $R_k = Z_k$ and $\sqrt 3^{R_k/Z_k} = \sqrt 3 \leq N_k$. This proves \eqref{eq:sqrt3_le_N}, and combining it with \eqref{eq:Wk_def} yields
\begin{align}
    W_k\ \geq\ \frac{Z_k\sqrt 3^{N_k}}{N_k}\ \geq\ \frac{\sqrt 3^{N_k}}{N_k}. \label{eq:Wk_lower}
\end{align}
Using \eqref{eq:Q_explicit}, \eqref{eq:Wk_def}, and \eqref{eq:Wk_lower}, we obtain
\begin{align}
    \frac{Q_k}{P_k}\ \leq\ \frac{T_k\sqrt 3^{N_k}+N_k\cdot 1.68^{N_k}}{W_k}\ \leq\ \frac{T_k\cdot \sqrt 3^{R_k/Z_k}}{Z_k} + N_k^2\paren{\frac{1.68}{\sqrt 3}}^{N_k}\ \leq\ \frac{T_k\cdot \sqrt 3^{R_k/Z_k}}{Z_k}+\veps(N_k), \label{eq:case2_u}
\end{align}
where the middle inequality uses $T_k\sqrt 3^{N_k}/W_k = T_k\sqrt 3^{R_k/Z_k}/Z_k$. Similarly, \eqref{eq:double_explicit} and \eqref{eq:Wk_lower} give
\begin{align}
    \frac{PP_k+QQ_k}{P_k}\ \leq\ \frac{2N_k^2\cdot 1.68^{N_k}}{W_k}\ \leq\ 2N_k^3\paren{\frac{1.68}{\sqrt 3}}^{N_k}\ =\ 2\veps(N_k). \label{eq:case2_v}
\end{align}
The right-hand sides of \eqref{eq:case2_u} and \eqref{eq:case2_v} tend to $0$, since $T_k \sqrt 3^{R_k/Z_k}/Z_k \to 0$ by assumption and $\veps(N_k) \to 0$. Hence, for all sufficiently large $k$, we have $Q_k<P_k$ and
\begin{align}
    6\cdot \frac{Q_k}{P_k} + 2\cdot \frac{PP_k+QQ_k}{P_k}\ <\ 1.
\end{align}
Applying \Cref{lem:uv_criterion} with $X=P_k$, $u = Q_k/P_k$, and $v = (PP_k+QQ_k)/P_k$, we conclude that $G_k$ is quotient-dominant for all sufficiently large $k$.
\end{proof}

%%%%%%%%%%%%%%%%%%%%%%%%%%%%%%%%%%%%%%%%%%%%%%%%%%%%%%%%%%%%%%%%%%%%%%
\section{\texorpdfstring{Proof of \Cref{conj:miller_vissuet} and other applications}{Proof of Miller--Vissuet Conjecture and other applications}} \label{sec:proofandcors}

We now apply \Cref{thm:mptqresult} to several families of groups. For dihedral and abelian groups, we also make the bounds of \Cref{sec:dom_thm} explicit, which lets us handle all group orders rather than only sufficiently large ones.

%%%%%%%%%%%%%%%%%%%%%%%%%%%%%%%%%%%%%%%%%%%%%%%%%%%%%%%%%%%%%%%%%%%%%%
\subsection{Product-dominant groups}

We begin by resolving \Cref{conj:miller_vissuet}. Let $D_k$ be the dihedral group of order $N=2k$. Proving that $D_k$ is product-dominant for sufficiently large $k$ is quick: with $Z_k,T_k$ as in \Cref{thm:mptqresult}, we have
\begin{align}
    Z_k \ &= \ \case{2 & k \text{ even}, \\ 1 & k \text{ odd},}
    \\ T_k \ &= \ \case{k+1 & k \text{ even,} \\ k & k\text{ odd,}}
\end{align}
so $Z_k = o(T_k)$ and \Cref{thm:mptqresult} gives the desired result. 

Now, we compute the threshold which we have been calling ``sufficiently large'', so that we can manually validate the conjecture for all smaller dihedral groups\footnote{From the perspective of \Cref{thm:mptqresult}, there's nothing particular special about dihedral groups. We apply this explicit threshold analysis as a proof-of-concept for our methodology, and to fully resolve \Cref{conj:miller_vissuet}. Similar analysis can be applied to other families of groups, but for brevity we settle for ``sufficiently large''.}.

For dihedral groups, the general bound \eqref{eq:FPP_bound} is too weak to give a small explicit threshold (small enough for computation), so we first sharpen it.

\begin{lem} \label{lem:dihedral_PP}
    Let $D_k$ be the dihedral group of order $N=2k$, with $k \geq 3$. Then for all $g \neq h \in D_k$,
    \begin{align}
        i(\mc F_{PP}(g,h))\ \leq\ \frac 97\cdot 19^{N/6}. \label{eq:dihedral_PP}
    \end{align}
\end{lem}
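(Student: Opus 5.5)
The plan is a case analysis on the types of $g$ and $h$, feeding the resulting graphs into \Cref{cor:Zhao_strong_indep} wherever possible. Write $D_k=\langle r,s \mid r^k=s^2=e,\ srs=r^{-1}\rangle$, so each element is a rotation $r^a$ or a reflection $sr^a$.

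\textbf{Both $g,h$ central.} Then \eqref{eq:FPP_central} already gives $i(\mc F_{PP}(g,h))\le 7^{N/4}$. Since $7^{1/4}<19^{1/6}$, this is below the target bound \eqref{eq:dihedral_PP}.

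\textbf{Setup for the remaining cases.} At least one of $g,h$ is non-central, and we cannot afford the weak bound \eqref{eq:FPP_bound}. Instead we work with the underlying undirected graph $\Ga_{PP}$ of $\mc F_{PP}(g,h)$, as in the proof of \Cref{lem:double_bound}. The neighbors of $x$ in $\Ga_{PP}$ are
\begin{align}
x^{-1}g,\quad gx^{-1},\quad x^{-1}h,\quad hx^{-1},
\end{align}
so every vertex has degree at most $4$. We then delete the set $S=\{x: x^2\in\{g,h\}\}$ of self-adjacent vertices; this does not change the number of independent sets. Call the resulting simple graph $\Ga'$. If every vertex of $\Ga'$ had degree at least $2$, \Cref{cor:Zhao_strong_indep} would give $i\le 19^{(N-|S|)/6}$ and we would be done. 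So the whole issue is to control the vertices of $\Ga'$ of degree $0$ or $1$.

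\textbf{Locating the low-degree vertices.} A vertex has degree below $2$ only if several of its four potential neighbors coincide or lie in $S$. Two of the relevant coincidences are
\begin{align}
x^{-1}g=gx^{-1} \iff x\in C_G(g), \qquad x^{-1}g=hx^{-1} \iff h=x^{-1}gx.
\end{align}
In $D_k$ the centralizer of a non-central element is small. For a reflection it has size at most $4$. For a rotation it is the rotation subgroup, but then the four neighbors are rotations or reflections in a rigid pattern, computable from $r^a\cdot r^b=r^{a+b}$, $r^a\cdot sr^b=sr^{b-a}$, $sr^a\cdot r^b=sr^{a+b}$ and $sr^a\cdot sr^b=r^{b-a}$.

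I will split into cases: ($g,h$ both rotations, at least one non-central), ($g$ a rotation and $h$ a reflection), and ($g,h$ both reflections). In each case I will write the components of $\Ga_{PP}$ explicitly. They should be long cycles or ladders on rotations and on reflections, or a bipartite pattern between the two cosets. The expected outcome is that $\Ga'$ consists of components of minimum degree $\ge 2$, together with a bounded number of small exceptional pieces (isolated vertices, single edges, short paths). Each such piece is created by deleting vertices of $S$ or by a centralizer coincidence.

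\textbf{Accounting.} For the components of minimum degree $\ge 2$, apply \Cref{cor:Zhao_strong_indep} to get $19^{|V|/6}$. For an exceptional path on $n$ vertices use the exact count $F_{n+2}$ and compare with $19^{n/6}$; the excess ratios are small constants, for example
\begin{align}
2/19^{1/6}\approx 1.22, \qquad 3/19^{1/3}\approx 1.12.
\end{align}
Each deleted vertex of $S$ contributes a saving factor $19^{-1/6}$, since the final bound is measured against $19^{N/6}$ while $\Ga'$ has only $N-|S|$ vertices. The aim is to pair each exceptional piece with the deleted vertices of $S$ adjacent to it, as in the $2b\le s$ argument of \Cref{lem:double_bound}. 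One then checks that the total excess is at most $9/7$, which I expect is attained by a single worst configuration, probably one arising when $g$ and $h$ are conjugate reflections or when $h=g^{-1}$.

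\textbf{Main obstacle.} The hardest step is the exhaustive case analysis. It requires identifying exactly when vertices of degree $\le 1$ occur, which depends on the parity of $k$, on whether $h\in\{g^{-1}\}$ or $h$ is conjugate to $g$, and on square roots of $g,h$. It also requires verifying that their net excess, after the savings from $S$, never exceeds $9/7$. For this I would first work out the case with $g,h$ reflections, where the centralizer coincidences $sr^a\cdot x=x\cdot sr^a$ are most frequent. I would also check small $k$ numerically to confirm that the constant $9/7$ is the right target.
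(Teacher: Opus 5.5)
Your proposal is a plan rather than a proof. After the (correct) central case, every decisive claim is stated as an expectation: the shape of the components of $\Ga'$, that low-degree vertices occur only in short paths, and that ``the total excess is at most $9/7$''. The case analysis that would establish these is never carried out.

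The accounting scheme also has holes as stated.
\begin{itemize}
\item \Cref{cor:Zhao_strong_indep} requires \emph{every} vertex of a component to have degree in $\{2,3,4\}$. Fibonacci counts only cover path components. So a component that contains a degree-$1$ vertex but is not itself a path falls under neither tool.
\item Nothing guarantees that distinct exceptional pieces are charged to distinct deleted vertices of $S$. The $2b\le s$ argument of \Cref{lem:double_bound} used that each vertex of $S$ has exactly one other neighbor. That need not hold here: if $g\neq e$ is a rotation and $h$ a reflection, a rotation $x$ with $x^2=g$ has two distinct reflection neighbors.
\end{itemize}
Your guess about where $9/7$ comes from is also off. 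It is not the value of any extremal configuration of $\mc F_{PP}(g,h)$.

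The idea you are missing is that most of this work is unnecessary. Adding edges can only decrease the number of independent sets, so $i(\mc F_{PP}(g,h))\le i(\mc F_P(g))$.

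\textbf{Case: $g$ is a reflection.} Then $|C_{D_k}(g)|\le 4$. As in the proof of \Cref{lem:FP_asym}, at most $4$ vertices of $\mc F_P(g)$ lie in $2$-cycles, and \Cref{lem:lucas_asym} gives
\begin{align*}
 i(\mc F_{PP}(g,h))\ \le\ \sqrt 3^{\,4}\cdot 7^{(N-4)/4}\ =\ \tfrac 97\cdot 7^{N/4}\ \le\ \tfrac 97\cdot 19^{N/6}.
\end{align*}
This is exactly where $9/7$ comes from.

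\textbf{Case: $g=\rho^a$ and $h=\rho^b$ are both rotations.} A product $xy$ is a rotation only when $x,y$ lie in the same coset of $\langle\rho\rangle$, so the graph splits into two parts.
\begin{itemize}
\item The rotation part is the central case of \Cref{lem:double_bound} applied to the abelian group $\langle\rho\rangle$, giving at most $7^{k/4}$.
\item On the reflection part, $x$ is joined to $x\rho^{\pm a}$ and $x\rho^{\pm b}$. This part is either entirely self-adjacent (if $a=0$ or $b=0$) or simple with all degrees in $\{2,3,4\}$, giving at most $19^{k/6}$.
\end{itemize}
Then $7^{k/4}\,19^{k/6}\le 19^{N/6}$.

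Your route could still be completed. For instance, when $g,h$ are both reflections, one checks that $S=\emptyset$ and every degree already lies in $\{2,3,4\}$, so you would get $19^{N/6}$ there with no factor $9/7$. As written, however, the proposal does not establish the lemma.
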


\begin{proof}
    Write $\rho$ for a rotation of order $k$, so the rotations form the cyclic subgroup $\angles{\rho}$ and every other element is a reflection. First suppose $g$ or $h$ is a reflection, say $g$. The centralizer of a reflection has order at most $4$, so, as in the proof of \Cref{lem:FP_asym}, at most $4$ vertices lie in $2$-cycles of $\mc F_P(g)$ and
    \begin{align}
        i(\mc F_{PP}(g,h))\ \leq\ i(\mc F_P(g))\ \leq\ \sqrt 3^{\,4}\cdot 7^{(N-4)/4}\ =\ \frac 97 \cdot 7^{N/4}\ \leq\ \frac 97\cdot 19^{N/6}. \label{eq:PP97dihedral}
    \end{align}
    Now suppose $g = \rho^a$ and $h = \rho^b$ are both rotations, with $a \neq b$ modulo $k$. A product $xy$ is a rotation exactly when $x$ and $y$ are both rotations or both reflections, so $\mc F_{PP}(g,h)$ has no edges between rotations and reflections, and $i(\mc F_{PP}(g,h))$ is the product of the numbers of independent sets of the two induced subgraphs. The subgroup of rotations is abelian, so we are in the situation of the last part of the proof of \Cref{lem:double_bound} with $G$ replaced by $\angles{\rho}$, hence the rotations contribute at most $7^{k/4}$.
    
    On the reflections, a reflection $x$ is joined to $xg,\ gx,\ xh,\ hx$, which are the reflections obtained from $x$ by rotating through $\pm a$ and $\pm b$. If $a = 0$ or $b=0$, every reflection is self-adjacent and contributes a factor of $1$. Otherwise, the reflections form a simple graph with every degree in $\{2,3,4\}$, so they contribute at most $19^{k/6}$ by \Cref{cor:Zhao_strong_indep}. Since $7^{1/4} < 19^{1/6}$, we conclude that
    \begin{align}
        i(\mc F_{PP}(g,h))\ \leq\ 7^{k/4}\cdot 19^{k/6}\ \leq\ 19^{k/3}\ =\ 19^{N/6}.
    \end{align}
    As $19^{N/6} \leq \frac 97 \cdot 19^{N/6}$, the bound in \eqref{eq:PP97dihedral} is the larger of the two, so \eqref{eq:dihedral_PP} holds in both cases.
\end{proof}

\begin{thm} \label{thm:dihedralgroups}
    All dihedral groups $D_n$ for $n \geq 3$ are product-dominant.
\end{thm}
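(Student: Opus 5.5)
The plan is to run the explicit version of the argument in the proof of \Cref{thm:mptqresult}, with dihedral-specific bounds. We would use the criterion of \Cref{lem:uv_criterion} with $X=Q$, $u=P/Q$ and $v=(PP+QQ)/Q$. We then find an explicit $k_0$ beyond which $6u+2v<1$, and handle $3\le k<k_0$ by direct computation. Throughout, write $N=2k$.

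\textbf{Step 1: lower bound for $Q$.} By \Cref{cor:FQ_asym}, $Q\ge T\sqrt3^{N}$, where $T\ge k$.

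\textbf{Step 2: upper bound for $P$.} We split the sum $P=\sum_g i(\mc F_P(g))$ by the type of $g$.
\begin{itemize}
\item For each of the at most $2$ central elements, we use the trivial bound $\sqrt3^N$.
\item For each of the $k$ reflections $g$, the centralizer has order at most $4$. The argument of \Cref{lem:dihedral_PP} then gives $i(\mc F_P(g))\le \frac97\cdot 7^{N/4}$.
\item For each noncentral rotation $g$, the centralizer is $\langle\rho\rangle$, of order $k=N/2$. This gives the weak bound $3^{N/4}7^{N/8}$. We can sharpen it: $\mc F_P(g)$ splits into a rotation part and a reflection part. On the rotations, which form an abelian group, $p_g$ is an involution, giving $\sqrt3^{\,k-r}$ with $r\ge0$. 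On the reflections, $p_g(x)=x^{-1}g=xg$, and iterating gives cycles of length $\ord(g)\ne2$, or length $2$ if $g$ has order $2$, in which case $g$ is central.
\end{itemize}
So for a noncentral rotation, $i(\mc F_P(g))\le \sqrt3^{\,k}\,7^{k/4}$. Altogether,
\[
P\le 2\sqrt3^N + k\sqrt3^{k}7^{k/4} + \tfrac97 k\,7^{N/4}.
\]
Dividing by $Q\ge k\sqrt3^N$, we get
\[
u\le \frac{2}{k} + \Bigl(\tfrac{7^{1/4}}{\sqrt3}\Bigr)^{k} + \tfrac97\Bigl(\tfrac{7^{1/4}}{\sqrt3}\Bigr)^{2k}.
\]

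\textbf{Step 3: bound for $v$.} By \Cref{lem:double_bound} and \Cref{lem:dihedral_PP}, each term of $PP+QQ$ is at most $(1+\frac97)19^{N/6}$. Hence
\[
v\le \frac{N(N-1)\cdot\frac{16}{7}\cdot 19^{N/6}}{k\sqrt3^N}.
\]
Since $19^{1/6}\approx1.633<1.732$, this decays geometrically, but only at ratio roughly $0.943$ per element of $G$. This is polynomial times $0.889^k$.

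\textbf{Step 4: threshold and small cases.} The main obstacle is that the $2/k$ term in $u$ alone requires $k>12$ to give $6u<1$, while the $v$ term, with its $N^2$ prefactor and ratio $0.889$, only becomes small for $k$ on the order of 60 to 80. We would first try to tighten $v$. The identity-involving pairs contribute only $1$, and pairs of two rotations give $7^{k/4}19^{k/6}$ rather than $19^{k/3}$. We would also use the exact inequality \eqref{eq:f_product_form} in place of $6u+2v<1$, to push $k_0$ down.

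Once some $k_0$ is certified by checking that the explicit bound is monotone beyond a numerically verified point, the remaining $D_k$ with $3\le k<k_0$ would be verified by computing $M_1$, $M_2$ and $B$ exactly. These are computable from the cycle structures via \Cref{lem:M1_sum} and \Cref{lem:2nd_mom}, so there is no need to enumerate subsets. For the very smallest cases, we would count MPTQ and MQTP sets directly.
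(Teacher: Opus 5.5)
Your large-$k$ argument is essentially the paper's. You use the same inputs: $Q\ge T\sqrt3^{N}$, an upper bound on $P$, and $PP+QQ\le\frac{16}{7}N(N-1)19^{N/6}$ via \Cref{lem:double_bound,lem:dihedral_PP}, fed into \Cref{thm:dom_ineq}; the paper obtains the explicit threshold $k\ge 61$. Two of your choices are marginally sharper than the paper's $U_k$ form: the bound $\frac97 7^{N/4}$ for reflection terms, and evaluating $f(u^+,v^+)$ with $Q^-=T\sqrt3^N$ in every denominator. Your ``sharpened'' bound $\sqrt3^{k}7^{k/4}$ for noncentral rotations, however, equals $(3^{1/4}7^{1/8})^N$, so nothing is gained there. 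None of this moves $k_0$ much below $60$, as you anticipate.

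The gap is in the finite range. You propose computing $M_1$, $M_2$ and $B$ exactly for all $3\le k<k_0$ ``from the cycle structures via \Cref{lem:M1_sum,lem:2nd_mom}'', with no subset enumeration. That works only for $M_1$.

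\begin{itemize}
\item $M_2$ needs $i(\mc F_{PP})$, $i(\mc F_{QQ})$ and $i(\mc F_{PQ})$ for all $N^2$ pairs. These graphs are not unions of cycles, since vertex degrees go up to $4$.
\item $B$ is not a moment of $d(A)$ and has no expression in terms of independent-set counts. To get it you must search the candidate unbalanced sets, i.e.\ those independent in some $\mc F_P(g)$ or $\mc F_Q(g)$. There are roughly $(T+Z)\sqrt3^{N}\approx k\cdot 3^{k}$ of them, which is hopeless near $k=60$.
\end{itemize}

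So your verification of $k$ between roughly $30$ and $k_0$ cannot be carried out as stated. The missing idea is the paper's layered check:
\begin{itemize}
\item For $53\le k\le 60$: keep the cheap exact values of $P$ and $Q$ (cycle lengths only), so $M_1=Q-P$ exactly. Use $2^N-B\le P+Q$ and $M_2\le P+Q+PP+QQ$ with the bound on $PP+QQ$; this already satisfies \eqref{eq:dom_ineq}.
\item Down to $k=31$: use exact $M_2$ but still the bound on $2^N-B$.
\item For $3\le k\le 30$ only: compute $B$ exactly by a pruned search, on a cluster for the larger of these $k$.
\end{itemize}

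Also, \Cref{thm:dom_ineq} is only sufficient, so nothing guarantees beforehand that the check succeeds for every small $k$. Your fallback of counting MPTQ and MQTP sets directly is feasible only for tiny $k$. The argument therefore rests on the computation actually going through, as it does in the paper.
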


\begin{proof}
    Let $D_k$ be the dihedral group of order $N_k := 2k$. We have
    \begin{align}
        Z_k\ =\ \case{2 & k \text{ even}, \\ 1 & k \text{ odd},} 
    \end{align}
    and
    \begin{align}
        T_k\ =\ \case{k+1 & k \text{ even,} \\ k & k\text{ odd.}}
    \end{align}
    Let $Q_k,P_k,QQ_k,PP_k$ be as in \Cref{lem:PQPPQQ}. As $7^{1/4}<1.68$, \Cref{cor:FQ_asym} and \Cref{lem:FP_asym} give
    \begin{align}
        T_k \cdot \sqrt 3^{N_k}\ &\leq\ Q_k\ \leq\ T_k \cdot \sqrt 3^{N_k} + (N_k-T_k)\cdot 1.68^{N_k},
        \\ P_k\ &\leq\ Z_k \cdot \sqrt 3^{N_k} + (N_k-Z_k)\cdot 1.68^{N_k}.
    \end{align}
    Furthermore, since the sums defining $QQ_k$ and $PP_k$ run over the $N_k(N_k-1)$ ordered pairs $(g,h)$ with $g \neq h$, \Cref{lem:double_bound,lem:dihedral_PP} give
    \begin{align}
        QQ_k \ &\leq\  N_k(N_k-1) \cdot 19^{N_k/6},
        \\ PP_k\ &\leq\ \frac 97\cdot N_k(N_k-1) \cdot 19^{N_k/6}.
    \end{align}
Let
\begin{align}
    U_k\ :=\ (T_k + Z_k ) \sqrt 3^{N_k}+(2N_k-T_k-Z_k)\cdot 1.68^{N_k}. \label{eq:dihedral_Uk}
\end{align}
Then $2^{N_k}-B(G_k) \leq Q_k+P_k \leq U_k$ and $M_2(G_k) \leq U_k + \frac{16}{7}N_k(N_k-1)\cdot 19^{N_k/6}$. Hence, by \Cref{thm:dom_ineq} and \Cref{lem:PQPPQQ}, we want to find $k_0$ such that $k \geq k_0$ implies
\begin{align}
    \frac{\braks{(T_k - Z_k ) \sqrt 3^{N_k}-(N_k-Z_k)\cdot 1.68^{N_k}}^2}{U_k\paren{U_k+\frac{16}{7}N_k(N_k-1)\cdot 19^{N_k/6}}}\ >\ \frac 12. \label{eq:dihedral_ineq}
\end{align}
Recall that $N_k=2k$. When $k$ is even, the left-hand side of \eqref{eq:dihedral_ineq} evaluates to
\begin{align}
    \frac{\braks{ (k-1)3^k-(2k-2)1.68^{2k} }^2}{\braks{ (k+3)3^k+(3k-3)1.68^{2k}}\braks{(k+3)3^k+(3k-3)1.68^{2k} + \frac{32}{7}k(2k-1)19^{k/3}}}.
\end{align}
One can check that this expression is strictly larger than $1/2$ for all even $k \geq 62$. When $k$ is odd, the left-hand side of \eqref{eq:dihedral_ineq} evaluates to
\begin{align}
    \frac{\braks{ (k-1)3^k-(2k-1)1.68^{2k} }^2}{\braks{ (k+1)3^k+(3k-1)1.68^{2k}}\braks{(k+1)3^k+(3k-1)1.68^{2k} + \frac{32}{7}k(2k-1)19^{k/3}}}.
\end{align}
One can check that this expression is strictly larger than $1/2$ for all odd $k \geq 61$. Thus, $D_k$ is product-dominant for all $k\geq 61$, and we are left to check all $k\in \{3,\dots,60\}$ manually. The details of this computation are given in \Cref{appendix:computation}.
\end{proof}

\begin{thm} \label{thm:symmetricgroups}
    All sufficiently large symmetric groups are product-dominant.
\end{thm}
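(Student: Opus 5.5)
We apply \Cref{thm:mptqresult}(i) to the sequence $G_n = S_n$. It suffices to show $Z_n = o(T_n)$, where $Z_n = |Z(S_n)|$ and $T_n$ is the number of elements of order $2$ in $S_n$.

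For the center, we use the standard fact that $Z(S_n)$ is trivial for $n \geq 3$, so $Z_n = 1$. The argument is short: if $\sigma \neq e$ with $\sigma(i) = j \neq i$, choose $k \notin \{i,j\}$. Then $\tau = (j\:k)$ satisfies $\tau\sigma\tau^{-1}(i) = k \neq j$, so $\sigma$ is not central.

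For the involutions, we bound $T_n$ from below by counting transpositions alone, which gives
\begin{align}
    T_n\ \geq\ \binom{n}{2}\ \to\ \infty.
\end{align}
The exact count is $T_n = \sum_{j \geq 1} \frac{n!}{2^j j!(n-2j)!}$, but it is not needed. Hence $Z_n/T_n \leq 2/(n(n-1)) \to 0$, so $Z_n = o(T_n)$, and \Cref{thm:mptqresult}(i) shows that $S_n$ is product-dominant for all sufficiently large $n$.

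No step here is genuinely difficult. The only care needed is that \Cref{thm:mptqresult} is stated for sequences of groups with $|G_k| \to \infty$, and $|S_n| = n! \to \infty$ holds. Making the threshold explicit, as was done for dihedral groups, would require the sharper estimates of \Cref{thm:dihedralgroups}, but the statement only asks for ``sufficiently large,'' so we do not pursue this.
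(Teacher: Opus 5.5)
Your proposal is correct and matches the paper's proof: the paper also takes $Z_k = 1$ for $k \geq 3$ and $T_k \geq \binom{k}{2}$ from the transpositions, concludes $Z_k = o(T_k)$, and invokes case (i) of \Cref{thm:mptqresult}. Your short proof that $Z(S_n)$ is trivial is a valid detail that the paper leaves implicit.
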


\begin{proof}
    For symmetric groups $\{S_k\}$ with $k\geq3$, we have $Z_k=1$ and $T_k \geq \binom{k}{2}$, so $Z_k = o(T_k)$, and case (i) of \Cref{thm:mptqresult} applies. 
\end{proof}

\begin{thm} \label{thm:simplegroups}
    All sufficiently large non-abelian finite simple groups are product-dominant.
\end{thm}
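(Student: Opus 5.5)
We will apply case (i) of \Cref{thm:mptqresult}. Let $\{G_k\}$ be any sequence of non-abelian finite simple groups with $|G_k|\to\infty$. It suffices to check that $Z_k = o(T_k)$. Indeed, if the conclusion failed, there would be infinitely many pairwise non-isomorphic non-abelian simple groups that are not product-dominant. Arranged into a sequence, these would contradict \Cref{thm:mptqresult}(i), since there are only finitely many groups of each order.

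The center is immediate. $Z(G_k)$ is a normal subgroup of $G_k$, and $G_k$ is non-abelian, so $Z(G_k) \neq G_k$. Simplicity then forces $Z(G_k)$ to be trivial, so $Z_k = 1$. It therefore remains to show $T_k \to \infty$.

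By the Feit--Thompson theorem, every non-abelian finite simple group has even order, so Cauchy's theorem gives an involution $t \in G_k$, and hence $T_k\geq 1$. The involutions form a union of conjugacy classes, and the class of $t$ has size $[G_k : C_{G_k}(t)]$. We will show this index tends to infinity, using two facts.

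\begin{enumerate}[(a)]
    \item Since $G_k$ is simple and non-abelian, its action on the cosets of a proper subgroup $H$ is faithful. Hence $G_k$ embeds in $S_{[G_k:H]}$, and so $|G_k| \leq [G_k:H]!$.
    \item $C_{G_k}(t)$ is a proper subgroup, because $t\notin Z(G_k)=\{e\}$.
\end{enumerate}

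Combining these, $[G_k:C_{G_k}(t)]! \geq |G_k| \to \infty$. Therefore $T_k \geq [G_k:C_{G_k}(t)] \to \infty$, and so $Z_k/T_k = 1/T_k \to 0$. \Cref{thm:mptqresult}(i) then gives the result.

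The only nontrivial input is the existence of an involution, which rests on Feit--Thompson. Everything else is elementary. An alternative to (a) is the Brauer--Fowler theorem, which bounds $|G|$ in terms of the size of an involution centralizer. The permutation-embedding argument avoids that and suffices here.
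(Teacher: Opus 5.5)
Your proof is correct and is essentially the paper's argument. The paper lets $G_k$ act by conjugation on the conjugacy class $\mathcal C_g$ of an involution and uses simplicity to embed $G_k$ into $S(\mathcal C_g)$, which is the same as your faithful action on the cosets of $C_{G_k}(t)$ (the two are isomorphic $G_k$-sets); this gives $Z_k = 1$ and $T_k \geq |\mathcal C_g| \to \infty$. Your explicit reduction from ``sufficiently large'' to a sequence, using that there are finitely many groups of each order, is a detail the paper leaves implicit.
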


\begin{proof}
Let $G$ be a non-abelian finite simple group. Then $Z = |Z(G)| = 1$, since $Z(G)$ is a normal subgroup of $G$ and is proper because $G$ is non-abelian. Moreover, by the Feit--Thompson theorem \cite{feit-thompson}, $G$ has even order, hence has an element $g$ of order $2$. Let $\mathcal C_g$ be the conjugacy class of $g$. Then, the action of $G$ on $\mathcal C_g$ by conjugation induces a homomorphism $\varphi :G \to S(\mathcal C_g)$, where $S(\mathcal C_g)$ denotes the symmetric group on $\mathcal C_g$. If $\ker \varphi = G$, then $xgx^{-1}=g$ for all $x \in G$, hence $g \in Z(G)$ which cannot be the case. Therefore, $\ker\varphi$ is a proper normal subgroup of $G$, thus is trivial. So, $G$ embeds in $S(\mathcal C_g)$, and $|G| \leq |\mathcal C_g|!$. Therefore, if $\{G_k\}$ is a sequence of non-abelian finite simple groups with $|G_k| \to \infty$, we must have an involution $g_k \in G_k$ for all $k$, and since $|G_k| \leq |\mathcal C_{g_k}|!$, we must have $|\mathcal C_{g_k}| \to \infty$. Finally, every element in the conjugacy class $\mathcal C_{g_k}$ must also have order $2$, thus $T_k \geq |\mathcal C_{g_k}| \to \infty$, so it follows that $Z_k = 1 = o(T_k)$, and case (i) of \Cref{thm:mptqresult} applies.
\end{proof}

%%%%%%%%%%%%%%%%%%%%%%%%%%%%%%%%%%%%%%%%%%%%%%%%%%%%%%%%%%%%%%%%%%%%%%
\subsection{Quotient-dominant groups}
\begin{thm} \label{thm:oddgroups}
    All sufficiently large groups of odd order are quotient-dominant.
\end{thm}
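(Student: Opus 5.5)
The plan is to apply case (ii) of \Cref{thm:mptqresult} to an arbitrary sequence $\{G_k\}$ of odd-order groups with $|G_k| \to \infty$. We need to verify that $T_k\cdot\sqrt 3^{R_k/Z_k} = o(Z_k)$. The argument is a short contrapositive: if infinitely many large odd-order groups failed to be quotient-dominant, we could extract a sequence of such groups with orders tending to infinity. Applying the theorem to that sequence would give a contradiction. So it suffices to treat an arbitrary sequence.

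First, since $|G_k|$ is odd, $G_k$ has no element of order $2$. This follows from Lagrange's theorem (an element of order $2$ would force $2 \mid |G_k|$). Hence $T_k = 0$ for every $k$.

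Second, we need $R_k/Z_k$ to be finite, which is automatic. In fact, exactly as in the argument in the proof of \Cref{thm:mptqresult}, $R_k = Z_k$. If $x^2 \in Z(G_k)$, then the image of $x$ in the odd-order quotient $G_k/Z(G_k)$ has order dividing $2$, so it is trivial and $x \in Z(G_k)$. The reverse inclusion is clear.

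Therefore $T_k\sqrt 3^{R_k/Z_k} = 0\cdot\sqrt 3 = 0$. Since $Z_k \geq 1$, the ratio $0/Z_k = 0$ tends to $0$, so the hypothesis of case (ii) holds trivially, and $G_k$ is quotient-dominant for all large $k$.

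There is no real obstacle here. The only point needing care is the reduction from ``all sufficiently large groups'' to sequences, handled by the contrapositive argument in the first paragraph. The analytic content all lives in \Cref{thm:mptqresult}, whose case (ii) already treats $T_k = 0$ explicitly in its proof.
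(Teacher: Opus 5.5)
Your proof is correct and takes the same approach as the paper: odd order forces $T_k = 0$, and $Z_k \geq 1$, so the hypothesis of case (ii) of \Cref{thm:mptqresult} holds trivially. Your check that $R_k = Z_k$ is not needed, since $0$ times any finite factor is $0$, and your explicit reduction from ``all sufficiently large groups'' to sequences is a step the paper leaves implicit; both are harmless additions.
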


\begin{proof}
    Let $\{G_k\}$ be a sequence of groups of odd order. Then, $T_k=0$ and $Z_k \geq 1$, so we immediately have
    \begin{align}
        T_k \cdot \sqrt 3^{R_k/Z_k}\ =\ 0\ =\ o(Z_k),
    \end{align}
    and case (ii) of \Cref{thm:mptqresult} applies. 
\end{proof}

For finite abelian groups, due to their prevalence in relation to the classic MSTD problem (see \cite{nathanson_history}), we go deeper to obtain more concrete results. In terms of the orders of their elements, abelian groups can vary greatly. For example, \cite{noMSTDpenmanwells} showed that abelian groups of the form $C_2^k$ are all balanced, but most other abelian groups are quotient-dominant, and in general abelian groups with many involutions are harder to analyze, as $Z_k$ and $T_k$ from \Cref{thm:mptqresult} are then of similar size. Hence, a general classifying theorem is harder to obtain for abelian groups, but the following result gets closer to classification with some extra strategies.

\begin{thm} \label{thm:abeliangroups}
    Let $G$ be a finite abelian group, written additively, let $2G := \{2g : g \in G\}$, and let $m:=|2G|$. Suppose $m\geq 6$ and $|G| \geq 171$. Then, $G$ is quotient-dominant.
\end{thm}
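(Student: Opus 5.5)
For abelian $G$ the hypothesis of case (ii) of \Cref{thm:mptqresult} fails: here $Z=R=N:=|G|$, so $\sqrt 3^{R/Z}=\sqrt 3$. I therefore plan to compute $P$ exactly, bound $Q,PP,QQ$ explicitly, and apply \Cref{lem:uv_criterion} with $X=P$.

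\textbf{Exact computation of $P$.} Let $t:=|G[2]|$ be the number of solutions of $2x=0$, so $t=N/m$ and $T=t-1$. Every $g$ is central, so the proof of \Cref{lem:FP_asym} gives $i(\mc F_P(g))=\sqrt 3^{\,N-r(g)}$. Here $r(g)=t$ if $g\in 2G$ (the solutions of $2x=g$ form a coset of $G[2]$) and $r(g)=0$ otherwise. Hence
\begin{align}
    P\ =\ (N-m)\sqrt 3^{\,N}+m\sqrt 3^{\,N-N/m}\ \geq\ (N-m)\sqrt 3^{\,N}.
\end{align}

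\textbf{Bounds on $Q$, $PP$, $QQ$.} By \Cref{cor:FQ_asym}, $Q\leq (t-1)\sqrt 3^{\,N}+(N-t+1)\,7^{N/4}$. Since all elements are central, \eqref{eq:FPP_central} applies to every pair, and \eqref{eq:FQQ_bound} bounds the other term, so
\begin{align}
    PP+QQ\ \leq\ N(N-1)\paren{7^{N/4}+19^{N/6}}.
\end{align}

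\textbf{The estimates for $u$ and $v$.} Using $t=N/m$, the main terms give
\begin{align}
    u\ =\ \frac QP\ \leq\ \frac{N/m-1}{N-m}+\frac{N\,7^{N/4}}{(N-m)\sqrt 3^{\,N}}\ =\ \frac1m+(\text{exponentially small}).
\end{align}
Similarly $v\leq N(N-1)\paren{7^{N/4}+19^{N/6}}/\paren{(N-m)\sqrt 3^{\,N}}$. Moreover $P>Q$, since $u<1$ once $m\geq 2$ and the error is small; so $M_1<0$.

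\textbf{Applying the criterion.} The key point is that $f(u,0)=u^2-6u+1>0$ exactly when $u<3-2\sqrt 2\approx 0.1716$. This explains the hypothesis $m\geq 6$: $u\approx 1/m\leq 1/6$, and $f(1/6,0)=1/36$. By the monotonicity in \Cref{lem:uv_criterion}(ii), it suffices to take the explicit upper bounds $u^+,v^+$ above. I would first show they are decreasing in $m$, so the worst case is $m=6$. Then I would show they are decreasing in $N$, and check $f(u^+,v^+)>0$ numerically at $N=171$. Only $m\leq N$ matters, and $m=N$ (odd order) is also covered by \Cref{thm:oddgroups} asymptotically, but the explicit bound should handle it uniformly.

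\textbf{Main obstacle.} The hardest step is the numerical margin. At $m=6$ we need roughly $v<1/36\cdot\frac{1}{1+u}\approx 0.024$. The crude bound $N^2(19^{1/6}/\sqrt3)^N$ is about $1$ at $N=171$, which is too weak. To fix this, I would first sharpen $QQ$ for abelian groups. For $g\neq \pm h$ the graph $\mc F_{QQ}(g,h)$ is $4$-regular, and \Cref{thm:zhao_strong_indep} then gives $31^{N/8}$, which is much smaller. The cases $h=-g$, and $g$ or $h$ of order $2$, give cycle unions bounded via \Cref{lem:lucas_asym}, and there are only $O(N)$ such pairs. I would also retain the negative term $-2i(\mc F_{PQ})$ where convenient. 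If this is still insufficient, the threshold $171$ indicates that the paper's own refinement goes further, and I would tune these pair-type counts until the inequality holds at $N=171$.
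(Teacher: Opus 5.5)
Your skeleton matches the paper's proof. You compute $P=(N-m)\sqrt3^{N}+m\sqrt3^{N-N/m}$ exactly, bound $Q\le T\sqrt3^N+(N-T)7^{N/4}$ and $PP+QQ\le N(N-1)(7^{N/4}+19^{N/6})$, and then apply \Cref{lem:uv_criterion} with $X=P$. But the reduction to the worst case $m=6$ fails as stated. Your $u^+$ and $v^+$ are \emph{not} decreasing in $m$: the error term of $u^+$ and all of $v^+$ carry the factor $1/(N-m)$, which grows with $m$ and is undefined at $m=N$ (odd order, $T=0$). In that case the lower bound $P\ge (N-m)\sqrt3^N$ is vacuous, so your bound does not handle $m=N$ uniformly, and \Cref{thm:oddgroups} gives no explicit threshold to fall back on.

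The missing step is to remove the dependence on $m$. If $T\ge 1$, then $N/m=T+1\ge 2$, so $N-m=mT\ge N/2$, i.e.\ $N/(mT)=(T+1)/T\le 2$. This gives bounds depending only on $N$: $u\le \tfrac16+2(7/9)^{N/4}$ and $v\le 2(N-1)\big((19/27)^{N/6}+(7/9)^{N/4}\big)$. If $T=0$, you must use the other term, $P=N\sqrt3^{N-1}$; then $u$ and $v$ are at most $\sqrt3(7/9)^{N/4}$ and $\sqrt3(N-1)\big((19/27)^{N/6}+(7/9)^{N/4}\big)$, which are tiny.

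Your ``main obstacle'' comes from a miscalculation. The estimate $N^2(19/27)^{N/6}\approx 1.3$ drops the denominator $N-m\ge N/2$. With it, $v\approx 0.0225$ at $N=171$, which is below the required $(u^2-6u+1)/(1+u)\approx 0.0236$, so $f(u^+,v^+)\approx 0.0012>0$. At $N=170$ the same expression is negative, which is exactly where the threshold $171$ comes from. So no sharpening of $QQ$ is needed.

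The sharpening you sketch is also wrong in its case analysis. When exactly one of $g,h$ has order $2$, $\mc F_{QQ}(g,h)$ is $3$-regular, not a union of cycles. There are about $2TN$ such pairs, which is $\Theta(N^2)$ when $m=6$, not $O(N)$.

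With the $N-m\ge N/2$ bound and a separate $T=0$ case, your argument becomes the paper's proof. The paper also gets $M_1<0$ directly from $Q^+-P\le \sqrt3^N\big(N(7/9)^{N/4}-3\big)<0$.
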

\vspace{-10pt}
\begin{proof}
    Let $N := |G|$, and let $T$ be the number of elements of order $2$. The kernel of the homomorphism $x\mapsto 2x$ consists of the identity and the $T$ elements of order $2$, so the first isomorphism theorem gives $N = m\cdot (T+1)$.

    Let $P$ and $Q$ be as in \Cref{lem:PQPPQQ} (formed with $G$ instead of $G_k$). Since $G$ is abelian, $Z(G)=G$, and the number $r(g)$ of square roots of $g$ is $T+1$ if $g \in 2G$ and $0$ otherwise. Hence, by \eqref{eq:FP_true_sum} and \Cref{cor:FQ_asym},
    \begin{align}
        P \ &=\ (N-m)\sqrt{3}^N + m\sqrt{3}^{N-(T+1)}  \ =\ mT\sqrt{3}^N + m\sqrt{3}^{N-(T+1)},  \label{eq:abelianP} \\
        Q \ &\leq\ T\sqrt{3}^N + (N-T)7^{N/4}. \label{eq:abelianQ}
    \end{align}
    Let $Q^+ := T\sqrt{3}^N + (N-T)7^{N/4}$, so $Q \leq Q^+$. Since $m\geq 6$ and $5T + 6\cdot 3^{-(T+1)/2} \geq 3$ for every $T \geq 0$, \eqref{eq:abelianP} and \eqref{eq:abelianQ} give
    \begin{align}
        M_1(G)\ \overset{\eqref{eq:M1_sum}}{=}\ Q - P \ \leq\ Q^+-P \ \leq\ \sqrt 3^N\paren{N(7/9)^{N/4} - 3}\ <\ 0,
    \end{align}
    where the last inequality holds as $N \geq 171$. Hence case (ii) of \Cref{thm:dom_ineq} applies. Since $G$ is abelian, every pair $g,h$ is central, so \eqref{eq:FPP_central} gives $PP \leq N(N-1)\cdot 7^{N/4}$, while \eqref{eq:FQQ_bound} gives $QQ \leq N(N-1)\cdot 19^{N/6}$ (here $PP$ and $QQ$ are as in \Cref{lem:PQPPQQ}, not to be confused with the products $P\cdot P$ and $Q\cdot Q$). Write $W := N(N-1)\paren{19^{N/6}+7^{N/4}}$, so that $PP+QQ \leq W$.

    As $Q<P$, we may apply \Cref{lem:uv_criterion} with $X=P$, and the quantities $u,v$ of that lemma satisfy
    \begin{align}
        \frac QP\ \leq\ \frac{Q^+}{P}\ =:\ u \quad \quad \text{and} \quad \quad \frac{PP+QQ}{P}\ \leq\ \frac WP\ =:\ v.
    \end{align}
    By \Cref{lem:uv_criterion}(ii), it therefore suffices to prove that
    \begin{align}
        f(u,v)\ =\ u^2-6u+1-v(1+u)\ >\ 0 \label{eq:abelian_f_goal}
    \end{align}
    for these $u$ and $v$. We find explicit upper bounds for $u$ and $v$ which depend only on $N$. If $T=0$, then $m=N$ and $P = N\sqrt 3^{N-1}$, so $u = \sqrt 3\,(7/9)^{N/4}$ and $v = \sqrt 3\,(N-1)\paren{(19/27)^{N/6}+(7/9)^{N/4}}$, and both bounds below hold. If $T \geq 1$, we have
    \begin{align}
        u \ &=\ \frac{T\sqrt{3}^N+(N-T)7^{N/4}}{(N-m)\sqrt{3}^N + m\sqrt{3}^{N-(T+1)}} \notag \\
        &\leq\ \frac{T\sqrt{3}^N+(N-T)7^{N/4}}{mT\sqrt{3}^N} \notag \\
        &=\ \frac{1}{m} + \frac{N-T}{mT} (7/9)^{N/4} \notag \\
        &\leq\ \frac{1}{6} + 2\left(\frac{7}{9}\right)^{N/4}
    \end{align}
    as $(N-T)/(mT) < N/(mT) = (T+1)/T \leq 2$. Moreover,
    \begin{align}
        v\ &=\ \frac{N(N-1)\paren{19^{N/6}+7^{N/4}}}{(N-m)\sqrt{3}^N+m\sqrt{3}^{N-(T+1)}} \notag \\
        &\leq\ \frac{N(N-1)\paren{19^{N/6}+7^{N/4}}}{mT\sqrt{3}^N} \notag \\
        &\leq\ 2(N-1)\paren{\left(\frac{19}{27}\right)^{N/6}+\left(\frac{7}{9}\right)^{N/4}}, 
    \end{align}
    since $N/(mT) = (T+1)/T \leq 2$. Thus, 
    \begin{align}
        f(u,v)\ &\geq\ f\left(\frac{1}{6}+2\left(\frac{7}{9}\right)^{N/4},\ 2(N-1)\paren{\left(\frac{19}{27}\right)^{N/6}+\left(\frac{7}{9}\right)^{N/4}}\right) \ =:\ h(N).
    \end{align}
    Finally, it is not hard to show that $h(N)$ is increasing for every $N\geq 12$, and numerical analysis shows that $h(N) > 0$ for all $N\geq 171$. 
    \end{proof}

\begin{rmk} \label{rmk:abelianthreshold}
    The threshold $N=171$ can be lowered computationally in the same way as for dihedral groups, as described in \Cref{appendix:computation}. However, due to the size of these groups, as well as the quantity with which they occur, it's unlikely that the present methods would resolve the rest of the abelian groups with $|2G| \geq 6$ and $|G| \leq 170$
\end{rmk}

%%%%%%%%%%%%%%%%%%%%%%%%%%%%%%%%%%%%%%%%%%%%%%%%%%%%%%%%%%%%%%%%%%%%%%
\section{\texorpdfstring{Computing $M_1(G)$ by characters and conjugacy classes}{Computing M1(G) by characters and conjugacy classes}} \label{sec:proving_expthm}

We aim to find the length of the cycle containing a vertex $x$ in $\mc F_P(g)$ in order to compute 
\begin{align}
    \sum_{g\in G}i(\mc F_P(g)).
\end{align}

\begin{defn}
    Recall from \Cref{sec:M1} the definition of $p_g^{(n)}(x)$. For $g,x \in G$, let $\ell_{g,x}$ denote the minimal positive integer $\ell$ such that $p_g^{(\ell)}(x)=x$. Thus, the cycle containing $x$ in $\mc F_P(g)$ has length $\ell_{g,x}$.
\end{defn}

\begin{lem} \label{lem:divisibility}
    If $p^{(d)}_g(x)=x$, then $\ell_{g,x}\mid d$.
\end{lem}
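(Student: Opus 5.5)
This is the standard orbit-period argument for the bijection $p_g$ acting on the finite set $G$. We use that $p_g$ is a bijection of $G$; this was observed in the proof of \Cref{lem:disjoint_cycles}, since $y=p_g(x)$ is the unique solution of $xy=g$ and the map $x\mapsto x^{-1}g$ has inverse $y\mapsto gy^{-1}$. By definition, $\ell:=\ell_{g,x}$ is the least positive integer with $p_g^{(\ell)}(x)=x$, and it exists by the same lemma.

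First we show that $p_g^{(m\ell)}(x)=x$ for every integer $m\ge 0$. This follows by induction on $m$ from the iteration rule $p_g^{(a+b)}=p_g^{(a)}\circ p_g^{(b)}$, which is immediate from the recursive definition $p_g^{(n)}=p_g\circ p_g^{(n-1)}$ by induction on $a$.

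Next, given $p_g^{(d)}(x)=x$ with $d\ge 1$, we apply the division algorithm to write $d=q\ell+r$ with $0\le r<\ell$. Then
\begin{align}
x\ =\ p_g^{(d)}(x)\ =\ p_g^{(r)}\paren{p_g^{(q\ell)}(x)}\ =\ p_g^{(r)}(x),
\end{align}
with the convention $p_g^{(0)}=\mathrm{id}$. If $r\ge 1$, this contradicts the minimality of $\ell$. Hence $r=0$, and $\ell\mid d$.

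No step here is a real obstacle. The only points to handle carefully are the composition rule for iterates and the $r=0$ convention. Note that bijectivity is not actually needed for this forward-iteration argument; it matters only for the existence of $\ell$, which the earlier lemma already supplies. As an alternative, one could verify the statement directly using the explicit forms \eqref{eq:pg_odd_form} and \eqref{eq:pg_even_form}, but the abstract argument above is cleaner.
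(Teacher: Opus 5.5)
Your proposal is correct and follows essentially the same argument as the paper: write $d = q\ell + r$ with $0 \le r < \ell$, use $p_g^{(q\ell)}(x) = x$ to get $p_g^{(r)}(x) = x$, and conclude $r = 0$ by minimality of $\ell$. Your extra care with the composition rule $p_g^{(a+b)} = p_g^{(a)}\circ p_g^{(b)}$ and the convention $p_g^{(0)} = \mathrm{id}$ only makes explicit what the paper uses implicitly. Also, given the hypothesis with $d \ge 1$, the existence of $\ell$ already follows from well-ordering, without appealing to bijectivity.
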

\begin{proof}
Write $\ell := \ell_{g,x}$ and $d = q\ell + s$ with $q \geq 0$ and $0 \leq s < \ell$. Since $p_g^{(\ell)}(x)=x$, applying this $q$ times gives $p_g^{(q\ell)}(x) = x$, hence
\begin{align}
    x\ =\ p_g^{(d)}(x)\ =\ p_g^{(s)}\paren{p_g^{(q\ell)}(x)}\ =\ p_g^{(s)}(x), \label{eq:div_alg}
\end{align}
where $p_g^{(0)}$ denotes the identity map. By the minimality of $\ell$, we must have $s=0$, so $\ell \mid d$.
\end{proof}

\begin{lem} \label{lem:poss_lengths}
    For any $g,x \in G$, $\ell_{g,x}$ divides $2\ord(g)$.
\end{lem}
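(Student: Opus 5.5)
By \Cref{lem:divisibility}, it suffices to show that $p_g^{(2\ord(g))}(x) = x$ for every $x \in G$. Then $\ell_{g,x}$ divides $2\ord(g)$ automatically.

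To do this, I will use the explicit formula \eqref{eq:pg_even_form} from \Cref{lem:pgqg_forms}. Taking $n = \ord(g)$ in
\begin{align}
    p_g^{(2n)}(x)\ =\ g^{-n}xg^{n}
\end{align}
gives $g^{-\ord(g)}\,x\,g^{\ord(g)} = e\,x\,e = x$. Hence $p_g^{(2\ord(g))}(x) = x$, and \Cref{lem:divisibility} yields $\ell_{g,x} \mid 2\ord(g)$.

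There is no real obstacle here. The only thing to check is that \eqref{eq:pg_even_form} is valid for every $n \geq 1$, which it is since $\ord(g) \geq 1$. The case $g = e$ also needs no separate treatment: then $n = 1$ and $p_e^{(2)}(x) = x$, so $\ell_{e,x}$ divides $2$.
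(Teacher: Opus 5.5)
Your proof is correct and is essentially the paper's own argument: set $n=\ord(g)$ in \eqref{eq:pg_even_form} to get $p_g^{(2\ord(g))}(x)=x$, then apply \Cref{lem:divisibility}. Your remark that $g=e$ needs no separate treatment is also right.
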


\begin{proof}
    By \Cref{lem:pgqg_forms},
    \begin{align}
        p^{(2\ord(g))}_g(x)\ =\ g^{-\ord(g)}xg^{\ord(g)}\ =\ x,
    \end{align}
    so by \Cref{lem:divisibility} we have $\ell_{g,x} \mid 2\ord(g)$.
\end{proof}

To determine the minimal $\ell$ with $p_g^{(\ell)}(x)=x$, it helps to know how many $x \in G$ satisfy $p_g^{(d)}(x)=x$ for a fixed positive integer $d$. This can be computed with character theory, as the irreducible characters of a group provide counting formulas for the size of the centralizer of an element, and the number of square roots of an element.

\begin{lem} \label{lem:Ldg_formula}
    For any $g \in G$, $d \in \Z^+$, let
    \begin{align}
        L(d,g)\ :=\ \#\brax{ x\in G: p_g^{(d)}(x)=x}.
    \end{align}
    Let $\chi_1,\dots, \chi_n$ be the irreducible characters of $G$, and let $\nu$ be the Frobenius--Schur indicator
    \begin{align}
        \nu(\chi)\ :=\ \frac{1}{|G|}\sum_{h \in G}\chi(h^2).
    \end{align}
    If $d=2k$ is even, then
    \begin{align}
        L(2k,g)\ =\ \abs{C_G(g^k)}\ =\ \sum_{i=1}^n\abs{\chi_i(g^k)}^2. \label{eq:L_even_d}
    \end{align}
    If $d=2k+1$ is odd, then
    \begin{align} \label{eq:L_odd_d}
        L(2k+1,g)\ =\ \sum_{i=1}^n \nu(\chi_i)\cdot \chi_i(g^{-2k-1}).
    \end{align}
\end{lem}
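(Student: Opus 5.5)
The plan is to use the explicit formulas of \Cref{lem:pgqg_forms} to turn the fixed-point condition $p_g^{(d)}(x)=x$ into a purely group-theoretic equation in $x$. Two classical character-theoretic counting facts then finish the count.

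\emph{Even case $d=2k$.} By \eqref{eq:pg_even_form}, $p_g^{(2k)}(x)=g^{-k}xg^k$, so $p_g^{(2k)}(x)=x$ exactly when $x$ commutes with $g^k$. Hence $L(2k,g)=|C_G(g^k)|$. The second equality is the column orthogonality relation for irreducible characters, $\sum_i \chi_i(y)\overline{\chi_i(y)}=|C_G(y)|$, applied with $y=g^k$.

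\emph{Odd case $d=2k+1$.} Write $2k+1=2n-1$ with $n=k+1$. By \eqref{eq:pg_odd_form}, $p_g^{(2k+1)}(x)=g^{-k}x^{-1}g^{k+1}$. The condition $g^{-k}x^{-1}g^{k+1}=x$ is equivalent to $x^{-1}g^{k+1}=g^kx$, i.e.\ $g^{k+1}=xg^kx$. To put this in square-root form, substitute $y:=xg^k$, which is a bijection $x\mapsto y$ of $G$. Then $y^2=xg^kxg^k=g^{k+1}g^k=g^{2k+1}$. Conversely, $y^2=g^{2k+1}$ gives back the original equation. So $L(2k+1,g)=r(g^{2k+1})$, the number of square roots of $g^{2k+1}$.

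It then remains to invoke the Frobenius--Schur count
\[
\#\{y:y^2=z\}=\sum_i \nu(\chi_i)\chi_i(z).
\]
Take $z=g^{2k+1}$. Since $r(z)$ is real and $\nu(\chi_i)\in\{0,\pm1\}$, complex conjugation gives $\sum_i\nu(\chi_i)\overline{\chi_i(z)}=\sum_i\nu(\chi_i)\chi_i(z^{-1})$. Also $r(z^{-1})=r(z)$, via $y\mapsto y^{-1}$. Either way the sum evaluates to $\sum_i\nu(\chi_i)\chi_i(g^{-2k-1})$, which matches \eqref{eq:L_odd_d}.

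For completeness I would sketch the Frobenius--Schur count itself. The square-root function $r$ is a class function, so it expands as $r=\sum_i\langle r,\chi_i\rangle\chi_i$. Computing the coefficient gives $\langle r,\chi_i\rangle=\frac1{|G|}\sum_y\overline{\chi_i(y^2)}=\overline{\nu(\chi_i)}=\nu(\chi_i)$, since $\nu$ is real.

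The main obstacle is the odd case: finding the substitution $y=xg^k$ that converts the twisted condition into $y^2=g^{2k+1}$, and then keeping track of the sign of the exponent when using conjugation and inversion.
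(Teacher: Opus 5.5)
Your proof is correct and takes essentially the paper's approach. The even case uses \eqref{eq:pg_even_form} and column orthogonality. The odd case reduces to counting square roots and then applies the Frobenius--Schur formula.

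The only difference is the substitution. The paper sets $x=g^{k+1}y$ and lands directly on $y^2=g^{-d}$. Your substitution $y=xg^k$ gives $y^2=g^{d}$, and you then need $r(z)=r(z^{-1})$ to match the stated formula. You also sketch the square-root count, which the paper simply cites.
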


\begin{proof}
    Recall that $p_g^{(2k)}(x) = x \iff xg^k=g^kx$. Thus, $L(2k,g) = |C_G(g^k)|$. The second equality in \eqref{eq:L_even_d} follows from the second orthogonality relation for characters.

    When $d=2k+1$ is odd, \Cref{lem:pgqg_forms} gives
    \begin{align}
        p_g^{(2k+1)}(x)\ =\ x \ \iff\ g^{-k}x^{-1}g^{k+1}\ =\ x.
    \end{align}
    Substituting $x=g^{k+1}y$ yields
    \begin{align}
        p_g^{(d)}(x)=x\ &\iff\  g^{-k}y^{-1}g^{-(k+1)}g^{k+1}=g^{k+1}y \notag
        \\ &\iff\ g^{-d}=y^2.
    \end{align}
Our substitution induces a bijection on $G$, hence the number of $y \in G$ satisfying $g^{-d}=y^2$ is precisely $L(d,g)$. The number of square roots of $g^{-d}$ is known in general to be
\begin{align}
    \sum_{i=1}^n\nu(\chi_i) \cdot \chi_i(g^{-d})
\end{align}
(see \cite{isaacs2006character} or \cite{rowe2022squareroots}, for example). This proves \eqref{eq:L_odd_d}.
\end{proof}

Having counted the $x$ satisfying $p_g^{(d)}(x)=x$, we apply the M\"obius inversion formula (see \cite{MobiusInversion}) to count the $x$ for which $d$ is the minimal positive integer with $p_g^{(d)}(x)=x$.

\begin{thm}\label{thm:num_of_cycles_prod}
    Fix $g\in G$. Let $c(m,g)$ be the number of $m$-length cycles in $\mc F_P(g)$. Then
    \begin{align}
        c(m,g)\ =\ \frac{1}{m}\sum_{d\mid m}\mu\left(\frac{m}{d}\right)L(d,g).
    \end{align}
\end{thm}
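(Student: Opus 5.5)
Because $p_g$ is a bijection on the finite set $G$, every $x\in G$ has a well-defined period $\ell_{g,x}$, and this period is exactly the length of the cycle of $\mc F_P(g)$ through $x$. The plan is to count vertices by period and then invert with Möbius. For each positive integer $d$, let $E(d,g)$ be the number of $x\in G$ with $\ell_{g,x}=d$. The cycle through $x$ consists of the iterates $x, p_g(x),\dots$, and all of these have the same period. So each $m$-cycle contributes exactly $m$ vertices of period $m$, and every vertex of period $m$ lies on an $m$-cycle. This gives $E(m,g)=m\,c(m,g)$.

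The next step is to relate $L(d,g)$ to the numbers $E(e,g)$. By \Cref{lem:divisibility}, $p_g^{(d)}(x)=x$ implies $\ell_{g,x}\mid d$. Conversely, if $\ell_{g,x}\mid d$, then iterating $p_g^{(\ell_{g,x})}(x)=x$ gives $p_g^{(d)}(x)=x$. So the set counted by $L(d,g)$ is the disjoint union, over divisors $e\mid d$, of the vertices with period exactly $e$. Hence
\begin{align}
    L(d,g)\ =\ \sum_{e\mid d}E(e,g)\qquad\text{for all } d\in\Z^+ .
\end{align}

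Möbius inversion over the divisor lattice then gives $E(m,g)=\sum_{d\mid m}\mu(m/d)L(d,g)$. Dividing by $m$ and using $E(m,g)=m\,c(m,g)$ yields the stated formula.

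The only step needing real care is the identification of cycle lengths with periods. Loops, where $p_g(x)=x$, must count as $1$-cycles, and mutual pairs, where $p_g(x)=y$ and $p_g(y)=x$, must count as $2$-cycles. Both conventions follow the proof of \Cref{lem:disjoint_cycles} and \Cref{rmk:lucas_forbiddance}, so I expect no genuine obstacle.
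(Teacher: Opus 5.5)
Your proposal is correct and follows the paper's own route. The paper counts vertices by exact period; its $f(d)$ is your $E(d,g)$. It then uses \Cref{lem:divisibility} to get $L(m,g)=\sum_{d\mid m} f(d)$, applies M\"obius inversion, and divides by $m$. You are slightly more careful than the paper, since you state explicitly the converse inclusion ($\ell_{g,x}\mid d \Rightarrow p_g^{(d)}(x)=x$) and the identity $E(m,g)=m\,c(m,g)$, both of which the paper leaves implicit.
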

\begin{proof}
    Define
    \begin{align}
    f(d) \ :=\ \#\{x\in G:\ell_{g,x}=d\}.
    \end{align} 
   Then, by \Cref{lem:divisibility}, we see that
    \begin{align}
        L(m,g)\ =\ \sum_{d\mid m}f(d).
    \end{align}
    Thus, by the Möbius inversion formula, we have 
    \begin{align}
        f(m)\ =\ \sum_{d\mid m}\mu(m/d)\cdot L(d,g), \label{eq:mobiususe}
    \end{align}
    where $\mu$ is the Möbius function.
    This gives us the number of $x\in G$ contained in $m$-cycles in $\mc F_P(g)$. We divide by $m$ to obtain the total number of $m$-length cycles.
\end{proof}

\begin{thm} \label{thm:expthm}
    Let $G$ be a finite group, let $\mc C_1,\dots,\mc C_n$ be the distinct conjugacy classes of $G$, and let $g_i$ be a representative for $\mc C_i$. Then
    \begin{align} \label{eq:expecG}
        M_1(G)\ =\ \sum_{i=1}^n|\mc C_i|\paren{ L_{\ord{g_i}}^{|G|/\ord{g_i}} - \prod_{m \mid \paren{2\cdot \ord{g_i}}}L_m^{c(m,g_i)} }.
    \end{align}
\end{thm}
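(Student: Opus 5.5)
The plan is to start from \Cref{lem:M1_sum}, which writes $M_1(G)=\sum_{g\in G}\paren{i(\mc F_Q(g))-i(\mc F_P(g))}$, and to evaluate each of the two terms separately.

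\textbf{The quotient term.} By \Cref{lem:FQ_form}, $i(\mc F_Q(g))=L_{\ord(g)}^{|G|/\ord(g)}$, so this term depends on $g$ only through $\ord(g)$.

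\textbf{The product term.} By \Cref{lem:disjoint_cycles}, $\mc F_P(g)$ is a disjoint union of cycles. By \Cref{rmk:lucas_forbiddance} and \eqref{eq:indeps_in_disj_graphs}, its number of independent sets is the product over $m$ of $L_m^{c(m,g)}$, where $c(m,g)$ is the number of $m$-cycles, as computed in \Cref{thm:num_of_cycles_prod}. By \Cref{lem:poss_lengths}, every cycle length $\ell_{g,x}$ divides $2\ord(g)$. Hence $c(m,g)=0$ unless $m\mid 2\ord(g)$, and
\begin{align}
    i(\mc F_P(g))\ =\ \prod_{m\mid 2\ord(g)}L_m^{c(m,g)}.
\end{align}

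\textbf{Grouping by conjugacy classes.} It remains to show that the whole summand is a class function of $g$. The order is clearly conjugation invariant, so this reduces to showing that $c(m,g)$ is constant on conjugacy classes. I see two ways to do this.
\begin{enumerate}
    \item \emph{Via characters.} By \Cref{lem:Ldg_formula}, $L(d,g)$ is given by $\sum_i|\chi_i(g^k)|^2$ or $\sum_i\nu(\chi_i)\chi_i(g^{-d})$. Both are class functions, because $g\mapsto g^k$ commutes with conjugation and characters are class functions. Then $c(m,g)$ is a class function by its M\"obius formula.
    \item \emph{Directly (cleaner).} For $h\in G$, the map $x\mapsto hxh^{-1}$ is a graph isomorphism $\mc F_P(g)\to\mc F_P(hgh^{-1})$, since $xy=g\iff (hxh^{-1})(hyh^{-1})=hgh^{-1}$. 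The same map is an isomorphism $\mc F_Q(g)\to \mc F_Q(hgh^{-1})$. Hence both $i(\mc F_P(g))$ and $i(\mc F_Q(g))$ are constant on conjugacy classes.
\end{enumerate}

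Summing over each class $\mc C_i$ then contributes $|\mc C_i|$ times the value at the representative $g_i$, which yields \eqref{eq:expecG}.

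\textbf{Expected difficulty.} I expect no step to be a serious obstacle. The only points needing care are the class-invariance argument and the observation that restricting the product to $m\mid 2\ord(g_i)$ loses nothing. The latter holds because \Cref{lem:poss_lengths} forces $c(m,g_i)=0$ for every other $m$, and $L_m^0=1$.
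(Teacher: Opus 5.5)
Your proposal is correct and follows essentially the same route as the paper: start from \Cref{lem:M1_sum}, evaluate $i(\mc F_Q(g))$ by \Cref{lem:FQ_form} and $i(\mc F_P(g))$ by the cycle decomposition with \Cref{lem:poss_lengths}, then group the summands by conjugacy class. The paper proves class-invariance by noting that $g\in AA \iff \sigma(g)\in\sigma(A)\sigma(A)$ for any automorphism $\sigma$ (and similarly for $AA^{-1}$), which is the same content as your graph-isomorphism argument in option (2).
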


\begin{proof}
By \Cref{lem:M1_sum},
    \begin{align}
        M_1(G)\ =\ \sum_{g \in G}\paren{i(\mc F_Q(g)) - i(\mc F_P(g))}. \label{eq:M1_mention2}
    \end{align}
By the work done in \Cref{sec:M1} (namely \Cref{lem:disjoint_cycles}, \Cref{rmk:lucas_forbiddance},  and \Cref{lem:FQ_form}), as well as the definition of $c(m,g)$ and \Cref{lem:poss_lengths}, we see that
\begin{align}
    i(\mc F_Q(g)) - i(\mc F_P(g))\ = \ L_{\ord(g)}^{|G|/\ord(g)}-\prod_{m \mid (2 \cdot \ord g)}L_m^{c(m,g)} .\label{eq:idiffinlucas}
\end{align}
Moreover, from the computation leading to \eqref{eq:exp_breakdown}, recall that
\begin{align}
    i(\mc F_Q(g)) - i(\mc F_P(g)) \ = \ \#\{A \subseteq G: g\in AA\} - \# \{ A \subseteq G: g \in AA^{-1}\}. \label{eq:subsetcontainment}
\end{align}
Now, for any $g \in G$, $A \subseteq G$, and any automorphism $\sigma$ of $G$, note that
\begin{align}
    g \in AA \ &\iff\ g = xy \text{ for } x,y\in A  \notag
    \\ &\iff\  \sigma(g)=\sigma(x)\sigma(y)  \text{ for } x,y \in A \notag
    \\ &\iff\  \sigma(g) \in \sigma(A)\sigma(A).
\end{align}
We obtain a similar equivalence for $g \in AA^{-1}$. Since $A \mapsto \sigma(A)$ is a bijection on the subsets of $G$, the expression in \eqref{eq:subsetcontainment} takes the same value at $g$ and at $\sigma(g)$. In particular, they are invariant under the action of conjugation, so it follows from \eqref{eq:idiffinlucas} that
\begin{align}
    \sum_{g \in G}\paren{i(\mc F_Q(g)) - i(\mc F_P(g))}\ &=\ \ \sum _{i=1}^n|\mathcal C_i|\paren{i(\mc F_Q(g_i)) - i(\mc F_P(g_i))} \notag
    \\ &=\ \  \sum_{i=1}^n|\mc C_i|\paren{ L_{\ord{g_i}}^{|G|/\ord{g_i}} - \prod_{m \mid \paren{2\cdot \ord{g_i}}}L_m^{c(m,g_i)} }.
\end{align}
\end{proof}

Using \Cref{thm:num_of_cycles_prod,thm:expthm}, we obtain a simpler way to compute $M_1(G)$ for groups whose character theory and conjugacy classes are understood.

%%%%%%%%%%%%%%%%%%%%%%%%%%%%%%%%%%%%%%%%%%%%%%%%%%%%%%%%%%%%%%%%%%%%%%
\section{Future work} \label{sec:futurework}

\Cref{thm:mptqresult} establishes the product/quotient-dominant behavior for many families of groups. However, the limitations of the theorem provide two natural paths for future work.
\begin{enumerate}
    \item For groups that satisfy the conditions of \Cref{thm:mptqresult}, such as those studied in \Cref{sec:proofandcors}, it may be of interest to quantify exactly when the key inequality in \eqref{eq:dom_ineq} is satisfied (as we do for dihedral groups in \Cref{thm:dihedralgroups}). Then, the product/quotient-dominant behavior can be manually computed so long as the remaining groups to check do not get too large.
    \item For groups which do not satisfy the conditions of \Cref{thm:mptqresult}, deeper analysis is required. In particular, these groups may not obtain the dominance of $\sqrt 3^N$ terms in $M_1,M_2,$ or $2^{|G|}-B$. For specific families like dicyclic (or generalized quaternion) groups, it is necessary to analyze the lower order terms to identify when (or if) \eqref{eq:dom_ineq} can be satisfied. For dicyclic groups, Mandal and Neetu \cite{dicycliccounts} recently obtained exact and asymptotic counts of the MSTD, MDTS, and balanced subsets of sizes two and three, which suggests that the dominance behavior of these groups is delicate. For this, formulations like \Cref{thm:num_of_cycles_prod,thm:expthm} could be useful.
\end{enumerate}

On the topic of groups for which the conditions of \Cref{thm:mptqresult} do not apply, we make the following remark.

\begin{rmk}
    There exists a quotient-dominant group with $M_1(G)>0$, namely the binary octahedral group (GAP ID (48,28)). Furthermore, there exists a product-dominant group with $M_1(G)<0$, namely $C_6 \times S_3$ (GAP ID (36,12)). Therefore the method of proving dominance using the inequality in \Cref{thm:dom_ineq} does not work for all groups.
\end{rmk}

It would be interesting to understand what exactly causes counterexamples such as these to occur.

Another natural direction may be to expand on the work of \cite{ZhaoGroups,haviland,generalizeddihedral}, by computing explicit asymptotic formulas on the proportion of subsets which are MPTQ or MQTP. One could also replace $AA$ and $AA^{-1}$ by more general words in $A$ and $A^{-1}$, in the spirit of the generalized MSTD sets of Iyer, Lazarev, Miller, and Zhang \cite{generalizedMSTD}.

Finally, for groups which are particularly resilient to the methodology of \Cref{thm:dom_ineq}, it may be possible to apply more advanced statistical methods by taking higher moments of $d(A)$. These can be understood with forbiddance graphs as follows. For subsets $U,V\subseteq G$, define the generalized forbiddance graph $\mc F(U,V)$ to be the graph where the elements of $U$ are forbidden ``product-wise'' and the elements of $V$ are forbidden ``quotient-wise''. For example, 
\begin{itemize}
    \item $\mc F_P(g) = \mc F(\{g\}, \emptyset)$,
    \item $\mc F_{QQ}(g, h) = \mc F(\emptyset, \{ g,h\}),$
    \item $\mc F_{PQ}(g,h) = \mc F(\{g\}, \{h\})$.
\end{itemize}
Then, one can show similarly to \Cref{lem:2nd_mom} that
\begin{align}
    M_r(G)\ :=\ \sum_{A \subseteq G} d(A)^r\ =\ \sum_{j=0}^r \binom{r}{j}(-1)^{r-j}\sum_{\substack{g_1,\dots,g_{r-j}\in G \\ h_1,\dots,h_j \in G}}i(\mc F(\{g_1,\dots, g_{r-j}\}, \{ h_1,\dots, h_j\})).
\end{align}
Using higher moments, it may be possible to obtain a stronger understanding of the distribution of $d(A)$.

\appendix

\section{Dihedral group computation} \label[appendix]{appendix:computation}

 We explain the computation involved, which can be found in \cite{dihedral_code}. Generally speaking, naively computing whether a group is product- or quotient-dominant requires iterating over all $2^N$ subsets, which becomes very slow, very quickly. However, computing the number of independent sets of a graph can be faster: once a subset of the vertices is not independent, any superset cannot be independent either. Thus, we implement a depth-first search, which starts from the empty set and branches to larger subsets until non-independence is reached, at which point we backtrack. This is especially useful because, by \cite{miller_vissuet}, the proportion of subsets $A$ with $AA = AA^{-1} = G$ converges to $1$; such a subset is independent in no forbiddance graph, so the search prunes early and this algorithm becomes considerably more efficient than the naive one.

 Furthermore, this algorithm can be pushed further if we already know the structure of the graphs we are studying. For instance, we know $\mc F_P$ and $\mc F_Q$ are disjoint unions of cycle graphs, so we need only count the lengths of the component cycles to compute the number of independent sets.

\Cref{thm:dihedralgroups} used \Cref{thm:dom_ineq}, along with our bounds on $M_1,M_2,$ and $2^N-B$, to show that product dominance holds for sufficiently large $k$. Explicit computation of $M_1^2/(M_2\cdot (2^N-B))$ shows that the desired inequality does hold for small $N$, when we use the actual values of $M_1,M_2$, and $B$.
\begin{itemize}
    \item Up to $61$, and far beyond it, computing $M_1$ is no problem, because the algorithm works extremely efficiently on graphs which it knows are disjoint cycle graphs, so \Cref{lem:M1_sum} makes it easy.
    \item Computing $M_2$ is harder, as some of the graphs in \Cref{lem:2nd_mom} can take much longer to compute independent sets for. However, we have the upper bound from \eqref{eq:M2_in_terms} and \eqref{eq:double_explicit}, which reduces computation of $M_2$ to the fast computations for $\mc F_P$ and $\mc F_Q$. 
    \item Computing the number of balanced subsets is the hardest, as it cannot be directly expressed using independent sets of graphs; we only have the inequality \eqref{eq:B_in_terms}, which allows us to bound $2^N-B$ with the fast computations for $\mc F_P$ and $\mc F_Q$.
\end{itemize}
With this in mind, starting from $k=60$ and going down, we use the upper bounds for $M_2$ and $2^N-B$ for as long as they can satisfy \eqref{eq:dom_ineq}. This works down through $k=53$. Then, wanting to avoid having to compute $B$ at all costs, we compute the exact value of $M_2$, continue using the upper bound for $2^N-B$, and this works down through $k=31$ (and technically $k=29$). With nowhere left to turn, we have to compute $B$ explicitly for $k\in \{3,\dots,30\}$. For the larger values of $k$ this required multi-threaded code running on a computing cluster. The desired inequality of \Cref{thm:dom_ineq} indeed held for all $k\in \{3,\dots,60\}$, completing the proof. Note that this need not have been the case, as a group $G$ can be product-dominant without satisfying the inequality \eqref{eq:dom_ineq}.

%%%%%%%%%%%%%%%%%%%%%%%%%%%%%%%%%%%%%%%%%%%%%%%%%%%%%%%%%%%%%%%%%%%%%%
\section*{Acknowledgments}

This research was conducted at the 2026 SMALL REU, with funding from the National Science Foundation (NSF Grant DMS-2341670), Missouri State University, the University of Chicago, the University of Michigan, the University of Rochester, and Williams College. We also thank Atrey Desai for running necessary computations on the UMIACS Nexus Cluster.

%%%%%%%%%%%%%%%%%%%%%%%%%%%%%%%%%%%%%%%%%%%%%%%%%%%%%%%%%%%%%%%%%%%%%%
\subsection*{AI use}
Large language models such as ChatGPT, Claude, and Gemini were used to assist with
\begin{itemize}
    \item finding and helping to resolve two flaws in earlier versions of the proof of \Cref{thm:mptqresult},
    \item refining our proof methods for \Cref{lem:double_bound,lem:uv_criterion,lem:dihedral_PP}, 
    \item searching the literature for applicable known methods, such as Jensen's inequality for \eqref{eq:jensen}, the results in \cite{zhao_strong_indep} for \Cref{cor:Zhao_strong_indep}, and the M\"obius inversion formula for \eqref{eq:mobiususe},
    \item optimizing the code in \cite{dihedral_code}.
\end{itemize}
The authors take full responsibility for the content of this paper.

\printbibliography

\end{document}

%% file: preamble.tex
\usepackage{hyperref, amsmath,amsthm,amsfonts,amssymb,thmtools,mathrsfs,dsfont,faktor,tikz,float,subcaption,quiver,enumerate,mathtools,booktabs,lipsum,xparse,graphicx,epstopdf,comment,indentfirst,bm,yhmath, bbm, thm-restate, setspace, appendix}
\usepackage[dvipsnames]{xcolor}
\usepackage[utf8]{inputenc}

\usepackage[
  backend=biber,
  sorting=nyt, style=alphabetic, maxnames=99
]{biblatex}

\usepackage[most]{tcolorbox}
\usepackage[boxsize=13pt,centertableaux]{ytableau}

\usepackage{fancyhdr}

\usepackage[nameinlink]{cleveref}
\hypersetup{colorlinks=true,linkcolor=blue, citecolor=blue}

\usepackage[margin=1in]{geometry}

\numberwithin{equation}{section}

\usepackage{mathpazo} % Font
\usepackage[skip=3pt plus2pt minus1pt]{parskip} % Paragraph spacing
\graphicspath{{graphics/}}  
\usetikzlibrary{arrows,trees,positioning}

\usepackage{listings}
\definecolor{codegreen}{rgb}{0,0.6,0}
\definecolor{codegray}{rgb}{0.5,0.5,0.5}
\definecolor{codepurple}{rgb}{0.58,0,0.82}
\definecolor{backcolour}{rgb}{0.97,0.97,0.97}
\lstdefinestyle{mystyle}{
    backgroundcolor=\color{backcolour},   
    commentstyle=\color{codegray},
    keywordstyle=\color{blue},
    numberstyle=\tiny\color{codegray},
    basicstyle=\ttfamily,
    breakatwhitespace=true,         
    breaklines=true,                 
    numbers=left,                    
    numbersep=5pt,                  
}
\allowdisplaybreaks % break equations between pages

%% file: thm_setup.tex
\counterwithin{sharedcount}{section}
\declaretheoremstyle[
spaceabove=6pt, 
spacebelow=6pt,
headfont=\normalfont\bfseries,
notefont=\mdseries, notebraces={(}{)},
bodyfont=\normalfont\itshape,
postheadspace=5pt,
headpunct=.
]{regular}

\NewDocumentCommand{\newthmtype}{m m m m m m O{}}{
\declaretheorem[name=#2,
sibling=sharedcount,
refname={#3,#4},
Refname={#5,#6},
style=regular,
#7
]{#1}
\declaretheorem[name=#2,
refname={#3,#4},
Refname={#5,#6},
style=regular,
numbered=no,
#7
]{#1*}
}

\newthmtype{thm}{Theorem}{theorem}{theorems}{Theorem}{Theorems}
\newthmtype{prop}{Proposition}{proposition}{propositions}{Proposition}{Propositions}
\newthmtype{rmk}{Remark}{remark}{remarks}{Remark}{Remarks}
\newthmtype{cor}{Corollary}{corollary}{corollaries}{Corollary}{Corollaries}
\newthmtype{defn}{Definition}{definition}{definitions}{Definition}{Definitions}
\newthmtype{lem}{Lemma}{lemma}{lemmas}{Lemma}{Lemmas}
\newthmtype{ex}{Example}{example}{examples}{Example}{Examples}
\newthmtype{que}{Question}{question}{questions}{Question}{Questions}
\newthmtype{exer}{Exercise}{exercise}{exercises}{Exercise}{Exercises}
\newthmtype{conj}{Conjecture}{conjecture}{conjectures}{Conjecture}{Conjectures}

%% file: references.bib
@article{nathanson_history,
  author        = {Melvyn B. Nathanson},
  title         = {Problems in Additive Number Theory, {I}},
  journal       = {arXiv preprint},
  year          = {2006},
  eprint        = {math/0604340},
  archivePrefix = {arXiv},
  primaryClass  = {math.NT},
  url           = {https://arxiv.org/abs/math/0604340},
}

@incollection{martinobryant,
  title={Many sets have more sums than differences},
  author={Martin, G. and O'Bryant, K.},
  booktitle={Additive Combinatorics},
  series={CRM Proceedings \& Lecture Notes},
  volume={43},
  pages={287--305},
  year={2007},
  publisher={American Mathematical Society},
  address={Providence, RI}
}

@inproceedings{miller_vissuet,
  author    = {Steven J. Miller and Kevin Vissuet},
  editor    = {Melvyn B. Nathanson},
  title     = {Most Subsets Are Balanced in Finite Groups},
  booktitle = {Combinatorial and Additive Number Theory},
  pages     = {147--157},
  publisher = {Springer New York},
  address   = {New York, NY},
  year      = {2014},
  isbn      = {978-1-4939-1601-6},
}

@article{ZhaoGroups,
  author    = {Yufei Zhao},
  title     = {Counting {MSTD} Sets in Finite Abelian Groups},
  journal   = {Journal of Number Theory},
  volume    = {130},
  number    = {10},
  pages     = {2308--2322},
  publisher = {Elsevier BV},
  year      = {2010},
  month     = oct,
  issn      = {0022-314X},
  doi       = {10.1016/j.jnt.2010.06.001},
  url       = {http://dx.doi.org/10.1016/j.jnt.2010.06.001},
}

@article{ZHAO_indep_sets,
  author    = {Yufei Zhao},
  title     = {The Number of Independent Sets in a Regular Graph},
  journal   = {Combinatorics, Probability and Computing},
  volume    = {19},
  number    = {2},
  pages     = {315--320},
  publisher = {Cambridge University Press (CUP)},
  year      = {2009},
  month     = nov,
  issn      = {1469-2163},
  doi       = {10.1017/s0963548309990538},
  url       = {http://dx.doi.org/10.1017/S0963548309990538},
}

@article{zhao_strong_indep,
  author    = {Ashwin Sah and Mehtaab Sawhney and David Stoner and Yufei Zhao},
  title     = {The Number of Independent Sets in an Irregular Graph},
  journal   = {Journal of Combinatorial Theory, Series B},
  volume    = {138},
  pages     = {172--195},
  publisher = {Elsevier BV},
  year      = {2019},
  month     = sep,
  issn      = {0095-8956},
  doi       = {10.1016/j.jctb.2019.01.007},
  url       = {http://dx.doi.org/10.1016/j.jctb.2019.01.007},
}

@misc{statproofbookSecondMoment,
  author       = {Joram Soch},
  title        = {The Second Raw Moment Equals Squared Mean Plus Squared Standard Deviation},
  howpublished = {The Book of Statistical Proofs},
  year         = {2020},
  month        = oct,
  doi          = {10.5281/zenodo.4305949},
  url          = {https://statproofbook.github.io/P/momraw-2nd.html},
}

@article{feit-thompson,
  author    = {Walter Feit and John G. Thompson},
  title     = {Solvability of Groups of Odd Order},
  journal   = {Pacific Journal of Mathematics},
  volume    = {13},
  pages     = {775--1029},
  year      = {1963},
  issn      = {0030-8730},
  url       = {http://projecteuclid.org/euclid.pjm/1103053943},
}

@book{isaacs2006character,
  author    = {Martin Isaacs},
  title     = {Character Theory of Finite Groups},
  publisher = {AMS Chelsea Publishing},
  address   = {Providence, RI},
  year      = {2006},
  isbn      = {978-0821842294},
}

@misc{rowe2022squareroots,
  author       = {Glenn Rowe},
  title        = {Square Roots of Group Elements},
  howpublished = {PhysicsPages},
  year         = {2022},
  month        = aug,
  url          = {https://physicspages.com/pdf/Group%20theory/Square%20roots%20of%20group%20elements.pdf},
}

@misc{MobiusInversion,
  author       = {Eric W. Weisstein},
  title        = {{M\"obius} Inversion Formula},
  howpublished = {MathWorld--A Wolfram Web Resource},
  url          = {https://mathworld.wolfram.com/MoebiusInversionFormula.html}
}

@inproceedings{haviland,
  author       = {Jack Haviland and Ph\'uc L\^am and John Lentfer and Steven J. Miller and Fernando Trejos Su\'arez},
  title        = {More Sums Than Differences Sets in Finite Non-Abelian Groups},
  booktitle    = {Combinatorial and Additive Number Theory {IV}: {CANT} 2019 and 2020},
  pages        = {21--77},
  organization = {Springer},
  year         = {2021},
}

@inproceedings{generalizeddihedral,
  title={Sum and Difference Sets in Generalized Dihedral Groups},
  author={Ascoli, Ruben and Cheigh, Justin and Dantas e Moura, Guilherme Zeus and Jeong, Ryan and Keisling, Andrew and Lilly, Astrid and Miller, Steven J. and Ngamlamai, Prakod and Phang, Matthew},
  booktitle={Combinatorics and Combinatorial Computing MCCCC34, Normal, IL, USA, October 21--23, 2022},
  editor={Bahmanian, Amin},
  series={Springer Proceedings in Mathematics \& Statistics},
  pages={1--30},
  year={2022},
  publisher={Springer}
}

@article{noMSTDpenmanwells,
  author  = {David Penman and Matthew Wells},
  title   = {Sum-Dominant Sets and Restricted-Sum-Dominant Sets in Finite Abelian Groups},
  journal = {Acta Arithmetica},
  volume  = {165},
  pages   = {361--383},
  year    = {2014},
  month   = jan,
  doi     = {10.4064/aa165-4-6},
}

@misc{dihedral_code,
  author = {Yuval Amit},
  title = {MSTD Groups},
  year = {2026},
  publisher = {GitHub},
  journal = {GitHub repository},
  howpublished = {\url{https://github.com/YuvalAmit2/MSTD_Groups.git}}
}

@article{cantelli,
  author  = {B. K. Ghosh},
  title   = {Probability Inequalities Related to {M}arkov's Theorem},
  journal = {The American Statistician},
  volume  = {56},
  number  = {3},
  pages   = {186--190},
  year    = {2002},
  doi     = {10.1198/000313002119},
  url     = {https://www.tandfonline.com/doi/abs/10.1198/000313002119},
}

@article{jensen1906,
  author  = {J. L. W. V. Jensen},
  title   = {Sur les Fonctions Convexes et les In\'egalit\'es entre les Valeurs Moyennes},
  journal = {Acta Mathematica},
  volume  = {30},
  number  = {1},
  pages   = {175--193},
  year    = {1906},
  doi     = {10.1007/BF02418571},
  url     = {https://doi.org/10.1007/BF02418571},
}

@article{ZhaoMissing,
  author  = {Yufei Zhao},
  title   = {Sets Characterized by Missing Sums and Differences},
  journal = {Journal of Number Theory},
  volume  = {131},
  number  = {11},
  pages   = {2107--2134},
  year    = {2011},
  doi     = {10.1016/j.jnt.2011.05.003},
  url     = {https://doi.org/10.1016/j.jnt.2011.05.003},
}

@article{hegartymiller,
  author  = {Peter V. Hegarty and Steven J. Miller},
  title   = {When Almost All Sets Are Difference Dominated},
  journal = {Random Structures \& Algorithms},
  volume  = {35},
  number  = {1},
  pages   = {118--136},
  year    = {2009},
  doi     = {10.1002/rsa.20268},
  url     = {https://doi.org/10.1002/rsa.20268},
}

@article{hemmadylottmiller,
  title={When Almost All Sets Are Difference Dominated in Z/nZ},
  author={Anand Hemmady and Adam Lott and Steven J. Miller},
  journal={Integers},
  year={2017},
  volume={17},
  pages={A54},
  url={https://api.semanticscholar.org/CorpusID:53617459}
}

@article{leftrightquotient,
  title={Comparing Left and Right Quotient Sets in Groups},
  author={Duvivier, Julian and Huang, Xiaoyao and Kennon, Ava and Kwon, Say-Yeon and Miller, Steven J. and Rysmakhanov, Arman and Saldin, Pramana and Watson, Ren},
  journal={Integers},
  volume={26A},
  year={2026}
}

@article{dicycliccounts,
  author        = {Sagar Mandal and Neetu},
  title         = {Exact and Asymptotic Counts of {MSTD}, {MDTS}, and Balanced Sets in Dicyclic Groups},
  journal       = {arXiv preprint},
  year          = {2026},
  eprint        = {2602.09073},
  archivePrefix = {arXiv},
  primaryClass  = {math.NT},
  url           = {https://arxiv.org/abs/2602.09073},
}

@article{nathanson_mstd,
  title={Sets with more sums than differences},
  author={Nathanson, M. B.},
  journal={Integers},
  volume={7},
  pages={A5},
  year={2007}
}

@article{hegarty_explicit,
  author  = {Hegarty, Peter V.},
  title   = {Some explicit constructions of sets with more sums than differences},
  journal = {Acta Arithmetica},
  year    = {2007},
  volume  = {130},
  pages   = {61--77},
  doi     = {10.4064/aa130-1-4}
}

@article{millerorosz,
  author  = {Steven J. Miller and Brooke Orosz and Daniel Scheinerman},
  title   = {Explicit Constructions of Infinite Families of {MSTD} Sets},
  journal = {Journal of Number Theory},
  volume  = {130},
  number  = {5},
  pages   = {1221--1233},
  year    = {2010},
  doi     = {10.1016/j.jnt.2009.09.003},
  url     = {https://doi.org/10.1016/j.jnt.2009.09.003},
}

@article{zhao_ballot,
  author  = {Yufei Zhao},
  title   = {Constructing {MSTD} Sets Using Bidirectional Ballot Sequences},
  journal = {Journal of Number Theory},
  volume  = {130},
  number  = {5},
  pages   = {1212--1220},
  year    = {2010},
  doi     = {10.1016/j.jnt.2009.11.005},
  url     = {https://doi.org/10.1016/j.jnt.2009.11.005},
}

@article{generalizedMSTD,
  author  = {Geoffrey Iyer and Oleg Lazarev and Steven J. Miller and Liyang Zhang},
  title   = {Generalized More Sums Than Differences Sets},
  journal = {Journal of Number Theory},
  volume  = {132},
  number  = {5},
  pages   = {1054--1073},
  year    = {2012},
  doi     = {10.1016/j.jnt.2011.10.012},
  url     = {https://doi.org/10.1016/j.jnt.2011.10.012},
}

@article{tao_productset,
  author  = {Terence Tao},
  title   = {Product Set Estimates for Non-Commutative Groups},
  journal = {Combinatorica},
  volume  = {28},
  number  = {5},
  pages   = {547--594},
  year    = {2008},
  doi     = {10.1007/s00493-008-2271-7},
  url     = {https://doi.org/10.1007/s00493-008-2271-7},
}

@article{herrmann_small,
  author        = {Yorick Herrmann},
  title         = {A Classification of Small {MSTD} Sets in Arbitrary Fields},
  journal       = {arXiv preprint},
  year          = {2026},
  eprint        = {2608.22189},
  archivePrefix = {arXiv},
  primaryClass  = {math.NT},
  url           = {https://arxiv.org/abs/2608.22189},
}

@book{stein2005real,
  title={Real Analysis: Measure Theory, Integration, and Hilbert Spaces},
  author={Stein, Elias M. and Shakarchi, Rami},
  volume={3},
  series={Princeton Lectures in Analysis},
  year={2005},
  publisher={Princeton University Press},
  address={Princeton, NJ},
  isbn={978-0-691-11386-9}
}
